\theoremstyle{plain}
\newtheorem{lem}{Lemma}[section] 
\newtheorem{thm}[lem]{Theorem}
\newtheorem{cor}[lem]{Corollary}
\newtheorem{prop}[lem]{Proposition}
\theoremstyle{definition}
\newenvironment{defn}
  {\pushQED{\qed}\defnx}
  {\popQED\enddefnx}
\newenvironment{defex}
  {\pushQED{\qed}\definex}
  {\popQED\enddefnx}
\newenvironment{ex}
  {\pushQED{\qed}\examplex}
  {\popQED\endexamplex}
  \theoremstyle{remark}
\numberwithin{equation}{lem}
\newenvironment{rk}
  {\pushQED{\qed}\obsx}
  {\popQED\endobsx}
\newcommand{\KR}{\mathbb{R}}
\newcommand{\RR}{\mathbb{R}} 
\newcommand{\CG}{\mathcal{G}}
\newcommand{\cG}{\mathcal{G}} 
\newcommand{\CH}{\mathcal{H}}
\newcommand{\cH}{\mathcal{H}} 
\newcommand{\CR}{\mathcal{R}}
\newcommand{\CU}{\mathcal{U}}
\newcommand{\g}{\mathfrak{g}}
\newcommand{\st}{\hspace{.05in}:\hspace{.05in}}
\newcommand{\y}{\hspace{.09in}\text{and}\hspace{.09in}}
\newcommand{\fto}{\rightarrow}
\newcommand{\soutar}{\rightrightarrows}
\newcommand{\tto}{\rightrightarrows} 
\newcommand{\gdt}{t}
\newcommand{\gds}{s}
\newcommand{\gdm}{m}
\newcommand{\gdu}{u}
\newcommand{\gdi}{\tau} 
\newcommand{\bt}{\mathbf{t}}
\newcommand{\bs}{\mathbf{s}}
\newcommand{\bm}{\mathbf{m}}
\newcommand{\bu}{\mathbf{u}}
\newcommand{\bi}{\boldsymbol{\tau}}
\newcommand{\tit}{\widetilde{t}}
\newcommand{\tis}{\widetilde{s}}
\newcommand{\tim}{\widetilde{m}}
\newcommand{\tiu}{\widetilde{u}}
\newcommand{\tii}{\widetilde{\tau}}
\newcommand{\tiR}{\widetilde{R}}
\newcommand{\Ima}{\mathrm{Im}}
\newcommand{\Fr}{\mathrm{Fr}}
\newcommand{\GL}{\mathrm{GL}}
\newcommand{\pr}{\mathrm{pr}}
\newcommand{\rank}{\mathrm{rank}}
\newcommand{\id}{\mathrm{id}}
\newcommand{\Hom}{\mathrm{Hom}}
\newcommand{\Diff}{\mathrm{Diff}}
\newcommand{\frs}{$s$-bisection}
\newcommand{\frt}{$t$-bisection}
\newcommand{\sbis}{\mathrm{sbis}}
\newcommand{\tbis}{\mathrm{tbis}}
\newcommand{\dphi}{d_{\phi}}
\newcommand{\dphig}{d_{\phi_{g}}}
\title{PB-Groupoids vs VB-Groupoids}
\author{Francesco Cattafi \thanks{Julius-Maximilians-Universit\"at W\"urzburg, Germany. 
		Email: \texttt{francesco.cattafi@uni-wuerzburg.de}		\newline		
		The author was partially supported by the DFG under Walter Benjamin project 460397678 (Germany), and is a member of the GNSAGA - INdAM (Italy).
		} \footnotemark[3]
		\and
Alfonso Garmendia
	\thanks{Max Planck Institute for Mathematics, Bonn, Germany.
		Email: \texttt{garmendia@mpim-bonn.mpg.de}\newline
		Part of the work was done in  Centre de Recerca Matemàtica and Universitat Politècnica de Catalunya, Spain. The author was supported by the Spanish State Research Agency MCIU/AEI / 10.13039/501100011033, through the Severo Ochoa and María de Maeztu Program for Centers and Units of Excellence in R\&D (project CEX2020-001084-M) and the grant PID2019-103849GB-I00.} 
		\thanks{Both authors were also partially supported by the AGAUR project 2021 SGR 00603 Geometry of Manifolds and Applications, GEOMVAP with PI Prof.\ Eva Miranda.} 
}
\begin{document}

\date{\today}

\maketitle

\begin{abstract}
In this paper we establish the principal bundle counterpart of the well-known and widely applied notion of vector bundle groupoid (VB-groupoid). In particular, we provide a general notion of principal bundle groupoid (PB-groupoid) as a diagram of Lie groupoids and principal bundles, together with the action of a (strict) Lie 2-groupoid. This recovers as particular cases various constructions involving structural Lie 2-groups, Lie groupoids and Lie groups. 

Moreover, given any VB-groupoid, we detect which frames are compatible with the underlying structure and show that the sets of such frames has a natural PB-groupoid structure, with the action of the general linear 2-groupoid of the appropriate ranks. We use this construction, as well as that of the associated bundle induced by a 2-representation, to extend the classical correspondence between vector bundles and principal bundles to a new correspondence between VB-groupoids and PB-groupoids. We conclude with several examples
and applications.
\end{abstract}

\begin{center}
{\bf MSC2020}: 58H05 (primary), 53C10, 57R25, 57R22 (secondary) 
\end{center}



\begin{center}
{\bf Keywords}: Lie groupoids, VB-groupoids, PB-groupoids, vector bundles, principal bundles, frame bundles, Lie 2-groupoids
\end{center}

\tableofcontents

\section{Introduction}



In differential geometry, the notions of principal bundles and vector bundles over a smooth manifold $M$ are intimately related by the standard correspondence 
\[
\left\{
\begin{array}{cc}
\text{Principal bundles over $M$} \\
\text{with structural Lie group $\GL(k,\RR)$}
\end{array}
 \right\}
\longleftrightarrow
 \left\{
\begin{array}{cc}
 \text{Vector bundles over $M$}\\
 \text{of rank $k$} 
\end{array}
\right\}.
\]

The overall goal of this paper is to (make sense and) generalise the correspondence above to ``principal bundle objects'' and ``vector bundle objects'' over a Lie groupoid $\cG \tto M$, namely
\[
\left\{
\begin{array}{cc}
\text{PB-groupoids over $\cG \tto M$} \\
\text{with structural Lie 2-groupoid $\GL(l,k)$}
\end{array}
 \right\}
\longleftrightarrow
 \left\{
\begin{array}{cc}
 \text{VB-groupoids over $\cG \tto M$}\\
 \text{of rank $(l,k)$} 
\end{array}
\right\}.
\]


VB-groupoids are already used in many geometric contexts. For instance, they appear when integrating fibrewise linear Poisson structures on vector bundles \cite{CosteDazordWeinstein87}; they are the models of two-term representations up to homotopy \cite{GraciaMehta17}, and in particular of the adjoint representation of a Lie groupoid \cite{AriasAbadCrainic13}; their cohomology control the deformation theory of Lie groupoids \cite{CrainicMestreStruchiner20}; they can be used for blow-up constructions in relation with index theory \cite{DebordSkandalis21}. Accordingly, just as the frames of vector bundles encode them into principal bundles, and therefore constitute a fundamental tool in their study, our notion of PB-groupoids and the correspondance above will shed more light on VB-groupoids and their applications. See also the end of this introduction for further motivations.

Let us first quickly recall the main objects involved in the classical correspondence. Given a manifold $M$ of dimension $n$, a frame at $x \in M$ is a linear isomorphism $\RR^n \to T_x M$. The set of frames of $M$ at all points, denoted by $\Fr(M)$ or $\Fr(TM)$, is a smooth manifold of dimension $n + n^2$, and is moreover a principal $\GL(n,\RR)$-bundle over $M$, with the right action given by the composition between frames and matrices. More generally, given any vector bundle $E \to M$ of rank $k$, its {\bf frame bundle} is the set
\[
\Fr(E) := \{ \RR^k \to E_x \text{ linear isomorphism}, x \in M \}.
\]
It is a smooth manifold of dimension $n + k^2$ and a principal $\GL(k,\RR)$-bundle over $M$. The case $E = TM$ (where $k = n$) recovers $\Fr(M)$. In other words, one has the construction 
\[
\Bigg\{ \text{ Vector bundles of rank $k$ } \Bigg\} \longrightarrow \Bigg\{ \text{ Principal $\GL(k,\RR)$-bundles } \Bigg\}
\]
\[
  (E \to M) \mapsto (\Fr(E) \to M).
\]

On the other hand, given a principal $H$-bundle $P \to M$ and a representation of $H$ on a $\KR$-vector space $V$, the {\bf associated bundle} is
\[
 P[V] := (P \times V)/H.
\]
If $V$ has dimension $k$, then $P[V]$ is a vector bundle of rank $k$. Then one has the construction
\[
\Bigg\{ \text{ Principal $H$-bundles and $H$-representations of dimension $k$ } \Bigg\} \longrightarrow \Bigg\{ \text{ Vector bundles of rank $k$ } \Bigg\}
\]
\[
(P \to M, V) \mapsto (P[V] \to M).
\]


Taking $P = \Fr(E)$ and $V = \RR^k$, for $k$ the rank of $E \to M$, one has an isomorphism of vector bundles $P[V] \cong E$. Conversely, given a principal $\GL(k,\RR)$-bundle $P \to M$, one gets an isomorphism of principal bundles $\Fr(P[\RR^k]) \cong P$ using the canonical $\GL(k,\RR)$-representation on $\RR^k$.

\

In order to extend the PB-VB correspondence to the world of Lie groupoids, one needs first of all to understand the objects on both sides. A {\bf VB-groupoid} is a vector bundle object in the category of Lie groupoid, i.e.\ a diagram
\[\begin{tikzcd}
	{E_\cG} & \cG \\
	{E_M} & M
	\arrow[from=1-1, to=2-1, shift left=.5ex]
	\arrow[from=1-1, to=2-1, shift right=.5ex]
	\arrow[from=1-2, to=2-2, shift left=.5ex]
	\arrow[from=1-2, to=2-2, shift right=.5ex]
	\arrow[from=1-1, to=1-2]
	\arrow[from=2-1, to=2-2]
\end{tikzcd}\]
consisting of two vector bundles and two Lie groupoids, such that the various morphisms are compatible with both structures (see Definition \ref{def_VB_groupoid} later on). This notion was introduced in 1988 by Pradines and it is now a standard concept in the theory of Lie groupoids; see e.g.\   \cite{GraciaMehta17,BCdH16, Mackenzie05} for an overview.

On the other hand, the notion of {\bf PB-groupoid} is not yet well established and the first goal of our paper is to fill 
this gap. If one tries to mimick the definition of VB-groupoid, i.e.\ a principal bundle object in the category of Lie groupoids, one encounters a diagram of the kind
\[\begin{tikzcd}
{P_\cG} & \cG \\
{P_M} & M
\arrow[from=1-1, to=2-1, shift left=.5ex]
\arrow[from=1-1, to=2-1, shift right=.5ex]
\arrow[from=1-2, to=2-2, shift left=.5ex]
\arrow[from=1-2, to=2-2, shift right=.5ex]
\arrow[from=1-1, to=1-2]
\arrow[from=2-1, to=2-2]
\end{tikzcd}\]
with a principal action of a group object in the category of Lie groupoids. More precisely, one uses a Lie 2-group $H_2\soutar H_1$, with $H_2$ acting on $P_\CG$ and $H_1$ acting on $P_M$, as in the definition of principal bundle groupoids given in \cite{GarmendiaPaycha23} by Garmendia and Paycha. However,  as we will see, this notion is very restrictive when working with the frames of a VB-groupoid. Furthermore,  besides standard principal bundles with a structure Lie group, one might be interested in considering principal bundles with structural Lie groupoids.



Indeed, in the literature there are various notions for ``higher'' principal bundles, including:
\begin{itemize}
	\item \textbf{Principal bundles over groupoids} as in \cite[Definition 2.33]{LTX}, \textbf{PBG-groupoids} as in \cite[Definition 2.1]{Mackenzie87paper} and \textbf{$G$-groupoids} as in \cite[Definition 4.4]{BruceGrabowskaGrabowski17}, where a Lie group acts on a Lie groupoid;
\item  \textbf{Generalized principal bundles} as in \cite[Theorem 2.1]{CastrillonRodriguez23}, where a Lie group bundle acts on a fibre bundle;	
	\item \textbf{Principal groupoid bundles} as in \cite[Section 5.7]{MoerdijkMrcun03}, \cite[Definition 2.1]{R}, \cite[Definition 3.1]{I} and \cite[Section 2.2]{H}, where a Lie groupoid acts on a smooth map;
	\item \textbf{Principal 2-bundles} as in \cite[Definition 6.1.5]{NW}, where a Lie 2-group acts on a fibred pair groupoid; 
	\item \textbf{Principal bundle groupoids} as in \cite{GarmendiaPaycha23} and \textbf{principal 2-bundles over Lie groupoids} as in \cite{CCK22} and \cite{HerreraOrtiz23}, where a Lie 2-group acts on a Lie groupoid.
\end{itemize}

In this article we extend the definition of \textbf{PB-groupoid} to a diagram as above, with a (strict) Lie \textit{2-groupoid} acting free and properly on the Lie groupoid $P_\cG \tto P_M$. This notion not only generalises all the definitions above, but also fits as the natural structure carried by the ``adapted frames'' of a VB-groupoid.


Indeed, while the construction of the associated vector bundle generalises easily to the PB-groupoid case, that of the frame bundle does not: taking all the possible frames of the vector bundle $E_\cG \to \cG$ does not yield a groupoid structure over the frame bundle of $E_M \to M$. Accordingly, one has to restrict to a class of frames of $E_\cG$ which are ``adapted'' to the VB-groupoid structure (which we will call \textit{$s$-bisection frames}), obtaining a diagram of the form
\[\begin{tikzcd}
{\Fr^\sbis(E_\cG)} & \cG \\
{\Fr(C) \times_M \Fr(E_M)} & M
\arrow[from=1-1, to=2-1, shift left=.5ex]
\arrow[from=1-1, to=2-1, shift right=.5ex]
\arrow[from=1-2, to=2-2, shift left=.5ex]
\arrow[from=1-2, to=2-2, shift right=.5ex]
\arrow[from=1-1, to=1-2]
\arrow[from=2-1, to=2-2]
\end{tikzcd}\]
Moreover, $\Fr^\sbis(E_\cG)$ admits a natural (principal) action of the Lie 2-groupoid $\GL(l,k)$, where $l$ is the rank of the core $C \to M$ of $E_\cG$ and $k$ the rank of the base $E_M \to M$, and defines therefore a PB-groupoid. Here $\GL(l,k)$ is a direct generalisation of the group of invertible matrices of a given dimension, which we describe explicitly but can also be seen as a particular case of the general linear 2-groupoid from \cite{DelHoyoStefani19} and of the 2-gauge groupoid from \cite{HerreraOrtiz23}.

\paragraph{Main results and structure of the paper}

Section 2 contains a solid background on VB-groupoids and related objects, including (strict) Lie 2-groupoids and (two versions of) 2-vector bundles, in order to make this article as self-contained as possible. Indeed, these last two topics are well-known to the experts but we could find no detailed/complete sources in the literaure. The most important object of this section, which will play a key role in the next ones, is the {\bf general linear 2-groupoid} $\GL(l,k)$ (Definition \ref{ex_GL_2_groupoid}).



Our contributions start in section 3. First of all we introduce two notions of {\bf 2-representation of Lie 2-groupoids} (Definition \ref{def_representation_2_groupoid}), one of which, to the best of our knowledge, has not appear previously in the existing literature. Then we provide our first main definition: the {\bf general notion of PB-groupoid} over a Lie groupoid (Definition \ref{def_PB_groupoid}). Finally, we describe the {\bf associated bundle of a PB-groupoid} and a 2-representation, and we prove that it is a VB-groupoid (Theorem \ref{thm_from_PB_to_VB}).

Section 4 is devoted to the opposite construction, namely the {\bf ``adapted frame bundle'' $\Fr^\sbis(E_\cG)$ of a VB-groupoid $E_\cG \tto E_M$} (Definition \ref{def_special_frames}). The main result here, i.e.\ the fact that $\Fr^\sbis(E_\cG) \tto \Fr(C) \times_M \Fr(E_M)$ is a PB-groupoid (Theorem \ref{thm_from_VB_to_PB}), is a consequence of the facts that it is a set-theoretical groupoid (Theorem \ref{thm_special_frames_are_groupoid}), it is endowed with a natural $\GL(l,k)$-action (Theorem \ref{prop_2-action_on_frames}), and it is smooth (Theorem \ref{thm_special_frames_are_principal_bundles}). We conclude with the proof that the constructions in section 3 and 4 are inverse to each other, and therefore yield a {\bf 1-1 correspondence between VB-groupoids and PB-groupoids} (Theorem \ref{thm_1-1_correspondence_VB_PB}).

Last, in Section \ref{sec_examples} we illustrate our main results by {\bf several interesting examples}, e.g.\ applying the frame bundle construction to the VB-groupoids with trivial core or with trivial base, as well as to various relevant cases of tangent Lie groupoids. Moreover, we discuss the {\bf compatibility of our construction with duality}: the PB-groupoid associated to the dual $E_\cG^*$ of a VB-groupoid $E_\cG$ turns out to be canonically isomorphic to the PB-groupoid associated to $E_\cG$ (Corollary \ref{cor_duality_PB}).

\paragraph{Further motivations and future applications}

On the one hand, this paper belongs to the recent programme of investigating classical objects of differential geometry compatible with a Lie groupoid. Such objects are often known as {\bf multiplicative structures} and have naturally emerged in various contexts in Poisson geometry \cites{BursztynDrummond19, Kosmann16}. In turn, this programme is strictly related to (and motivated by) the description of such geometric objects on {\bf manifolds with singularities}, e.g.\ leaf spaces of foliations or orbit spaces of group actions. Indeed, these spaces are not smooth manifolds but can be formalised in the framework of differentiable stacks, i.e.\ Morita equivalence classes of Lie groupoids \cite{DelHoyo13}. In particular, the frame bundle of the tangent VB-groupoid $T\cG \tto TM$ (Example \ref{ex_PB_of_tangent_groupoid}) represents the first step 
in order to obtain a notion of {\bf frame bundle of a differentiable stack}. More generally, one could investigate principal bundles over a differentiable stack, complementing what done in \cite{DelHoyoOrtiz20} for VB-stacks, which led already to further insights and applications to
representations up to homotopy.

On the other hand, an important application of the classical PB-VB correspondence is the possibility to encode geometric structures on the tangent bundle of a manifold (e.g.\ Riemannian structures, symplectic structures, complex structures, etc.) via reductions of the structure group of its frame bundle, giving rise to the well-known theory of $G$-structures \cite{Sternberg64, Kobayashi95}. In several geometric problems where they arise, Lie groupoids are naturally equipped with compatible geometric structures. As a future application of the results in our paper, we aim at establishing a theory of {\bf multiplicative $G$-structure-like objects on a Lie groupoid $\cG$} in terms of reductions of the structural Lie 2-groupoid of its adapted frame bundle. Two crucial tasks in this problem will involve the generalisation of the classical tautological/soldering form $\theta \in \Omega^1 (\Fr(M),\RR^n)$ to the frame bundle of $T\cG$, as well as the appropriate notion of principal connection for PB-groupoids. This last object would naturally pave the way to the development of a {\bf Chern-Weil theory for Lie groupoids}, extending the results in \cite{LaurentGengouxTuXu07}, \cite{BiswasChatterjeeKoushikNeumann24} and \cite{HerreraOrtiz23}.

Last, one of the main principle of Lie theory is the strict relation between global and infinitesimal objects, which gives the possibility to transform non-linear problems into linear ones. In particular, it is known that multiplicative structures on Lie groupoids induce (interesting, and possibly easier) {\bf infinitesimally multiplicative structures} on the corresponding Lie algebroids. Accordingly, we plan to adapt our construction of the adapted frame bundle to a VB-algebroid, obtaining what one should call {\bf PB-algebroid}, which we expect to be the infinitesimal counterpart of our PB-groupoids.

\paragraph{Notations and conventions}

All manifolds and maps are smooth, unless explicitly stated otherwise. Principal group(oid) actions are always considered from the right, while representations are considered from the left. The symbol $\GL(k)$ denotes the group of $k \times k$ invertible \textit{real} matrices. 

In many instances, we will consider several different groupoid structures, possibly on the same space. For all of them, we use the letters $\gds$ and $\gdt$ to denote the source and target, the letter $\gdm$ and the sign $\circ$ or $\circ_\gdm$ to denote the multiplication, the letter $\gdu$ for the unit and $\gdu_x$ to denote the unit at a point $x$ in the base, and the letter $\gdi$ or the symbol $(-)^\gdi$ for the inverse and $g^\gdi$ to denote the inverse of an arrow $g$. We use tilde for the structure maps of VB-groupoids ($\tis$, $\tit$, $\tim$, etc.) and boldface for the structure maps of PB-groupoids ($\bs$, $\bt$, $\bm$, etc.).

\paragraph{Acknowledgements} 


We would like to thank Chenchang Zhu, Luca Vitagliano, Alfonso Tortorella, Madeleine Jotz and Ivan Struchiner for their interest in our work and for useful discussions and suggestions at various stages of this project, as well as Miquel Cueca Ten for suggesting several missing references.

We are also grateful to the anonymous reviewers for their comments, which significantly improved this paper; in particular, for pointing out the relations with the fat groupoid, which led to a more conceptual understanding and a better proof of Theorem \ref{thm_special_frames_are_groupoid}.

\section{Background on 2-objects}

Through this paper we will use, without recalling the basics, the theory of Lie groupoids, Lie algebroids, and principal bundles (with structural Lie groups or Lie groupoids). For an introduction on these topics we refer to \cite{Mackenzie05, CrainicFernandes11, MoerdijkMrcun03}.

\subsection{VB-groupoids}

In this section we review some basics on VB-groupoids; for proofs and further details we refer to \cite{GraciaMehta17,BCdH16, Mackenzie05}. The reader familiar with this topic can safely skip this section but should take note of our emphasis on a non-standard terminology, which will be relevant in the later sections: the {\bf rank} $(l,k)$ of a VB-groupoid consists of the ranks of the core and of the base vector bundles.


\begin{defn}\label{def_VB_groupoid}
A {\bf VB-groupoid of rank $(l,k)$} is a commutative diagram
\[\begin{tikzcd}
	{E_\cG} & \cG \\
	{E_M} & M
	\arrow[from=1-1, to=2-1, shift left=.5ex, "\tis"]
	\arrow[from=1-1, to=2-1, shift right=.5ex, "\tit"']
	\arrow[from=1-2, to=2-2, shift left=.5ex, "s"]
	\arrow[from=1-2, to=2-2, shift right=.5ex, "t"']
	\arrow["{\pi_\cG}", from=1-1, to=1-2]
	\arrow["{\pi_M}", from=2-1, to=2-2]
\end{tikzcd}\]
such that
\begin{itemize}
 \item $\pi_{\cG} \colon E_{\cG} \to \cG$ is a vector bundle of rank $l+k$ and $\pi_M: E_M \to M$ is a vector bundle of rank $k$;
 \item $\cG \tto M$ and $E_{\cG} \tto E_M$ are Lie groupoids;
 \item all the groupoid structure maps are morphisms of vector bundles. \qedhere
\end{itemize}
\end{defn}

Equivalent definitions of a VB-groupoid can be found in \cite[Section 3.1]{GraciaMehta17}. For instance, the third bullet can be replaced by the following three conditions:
\begin{itemize}
	\item $(\tis, s)$ and $(\tit, t)$ are morphisms of vector bundles;
	\item $(\pi_{\cG}, \pi_M)$ is a morphism of Lie groupoids;
	\item the interchange law hold, i.e.\ 
	\[
	\tim ( e_1 + e_3, e_2 + e_4 ) = \tim (e_1 ,e_2) + \tim (e_3, e_4)  
	\]
	for every $(e_1,e_2), (e_3,e_4) \in E_{\cG} \tensor[_{s}]{\times}{_{t}} E_{\cG}$ such that $\pi_\cG (e_1) = \pi_\cG (e_3)$ and $\pi_\cG (e_2) = \pi_\cG (e_4)$.
\end{itemize}

\begin{defn}\label{def_core}
The (right-){\bf core} of a VB-groupoid is
\[
 C := \ker\left(\,\tis\,\right)|_M = \left\{\, e \in E_{\cG} \st x \in M\,,\,\, \pi_{\cG} (e) = \gdu (x)\,,\,\, \tis (e) = 0 \,\right\} \subseteq E_{\cG}.
\]
The {\bf core-anchor} $\rho$ is the restriction of $\tit \colon E_{\cG} \to E_M$ to the core, i.e.\ $\rho:= \tit|_C: C \to E_M$.
\end{defn}

The core has a natural structure of vector bundle of rank $l$ over $M$, so that the core-anchor is a morphism of vector bundles over $\id_M$. The {\it left-core} is defined analogously using $\tit$ instead of $\tis$, and it is canonically isomorphic to $C$.

\begin{ex}\label{ex_tangent_VB_groupoid}
Given any Lie groupoid $\cG \tto M$, the tangent bundles $T \cG \to \cG$ and $TM \to M$, together with the differential of the structure maps of $\cG$, define a VB-groupoid of rank $(\dim(\cG) - \dim(M), \dim(M))$, called the {\bf tangent VB-groupoid}:
\[\begin{tikzcd}
	{T\cG} & \cG \\
	{TM} & M
	\arrow[from=1-1, to=2-1, shift left=.5ex, "ds"]
	\arrow[from=1-1, to=2-1, shift right=.5ex, "dt"']
	\arrow[from=1-2, to=2-2, shift left=.5ex, "s"]
	\arrow[from=1-2, to=2-2, shift right=.5ex, "t"']
	\arrow["{\pi_\cG}", from=1-1, to=1-2]
	\arrow["{\pi_M}", from=2-1, to=2-2]
\end{tikzcd}\]
Its core coincides with the Lie algebroid $A \to M$ of $\cG$ and its core-anchor with the anchor $\rho := dt|_A \colon A \to TM$ of $A$.
\end{ex}

\begin{ex}\label{ex_cotangent_VB_groupoid}
Given any Lie groupoid $\cG \tto M$ with algebroid $A \to M$, the cotangent bundle $T^*\cG \to \cG$ and the dual bundle $A^* \to M$ define a VB-groupoid of rank $(\dim(M), \dim(\cG) - \dim(M))$, called the {\bf cotangent VB-groupoid}.
\[\begin{tikzcd}
	{T^*\cG} & \cG \\
	{A^*} & M
	\arrow[from=1-1, to=2-1, shift left=.5ex, "\tis"]
	\arrow[from=1-1, to=2-1, shift right=.5ex, "\tit"']
	\arrow[from=1-2, to=2-2, shift left=.5ex, "s"]
	\arrow[from=1-2, to=2-2, shift right=.5ex, "t"']
	\arrow[from=1-1, to=1-2]
	\arrow[from=2-1, to=2-2]
\end{tikzcd}\]
The source and target maps of $T^*\cG \tto A^*$  are defined, for $g\in \CG$, $\xi_g\in T_g^* \CG$ and $v_{(-)}\in A_{(-)}$, as follows:
\[
 \tis (\xi_g) (v_{s(g)}):= - \xi_g (d_e L_g (d_{1_{s(g)}}\gdi(v))), \quad \quad \tit (\xi_g) (v_{t(g)}) := \xi_g (d_e R_g (v) ).
\]
The multiplication between two elements $\xi_1 \in T^*_{g_1}\cG$ and $\xi_2 \in T^*_{g_2}\cG$ such that $\tis(\xi_1) = \tit (\xi_2)$ is determined, for any composable $v_1 \in T_{g_1}\cG$ and $v_2 \in T_{g_2}\cG$, by
\[
 (\xi_1 \circ \xi_2) (dm (v_1,v_2) ) := \xi_1 (v_1) + \xi_2 (v_2).
\]
The core of $T^*\cG$ coincides with the cotangent bundle $T^* M \to M$ and its core-anchor with the dual map $\rho^*: T^*M \to A^*$ of the anchor $\rho: A \to TM$.
\end{ex}

\begin{defex}\label{ex_dual_VB_groupoid}
Given any VB-groupoid $E_\cG \tto E_M$ of rank $(l,k)$ with core $C \to M$, the dual bundles $E_\cG^* \tto \cG$ and $C^* \to M$ define a VB-groupoid of rank $(k,l)$, called the {\bf dual VB-groupoid}:
\[\begin{tikzcd}
	{E_\cG^*} & \cG \\
	{C^*} & M
	\arrow[from=1-1, to=2-1, shift left=.5ex, "\tis"]
	\arrow[from=1-1, to=2-1, shift right=.5ex, "\tit"']
	\arrow[from=1-2, to=2-2, shift left=.5ex, "s"]
	\arrow[from=1-2, to=2-2, shift right=.5ex, "t"']
	\arrow[from=1-1, to=1-2]
	\arrow[from=2-1, to=2-2]
\end{tikzcd}\]
The source and target maps of $E_\cG^* \tto C^*$ are defined, for $g\in \CG$, $\xi_g\in (E^*_\CG)_g$ and $v_{(-)}\in C_{(-)}$, as follows:
\[
 \tis (\xi_g) (v_{s(g)}):= - \xi_g (\tim(0_g, \tii(v) ) ), \quad \quad \tit (\xi_g) (v_{t(g)}) := \xi_g (\tim (v, 0_g) ).
\]
The multiplication between two elements $\xi_1 \in (E_{\cG}^*)_{g_1}$ and $\xi_2 \in (E_{\cG}^*)_{g_2}$ such that $\tis(\xi_1) = \tit (\xi_2)$ is determined, for any composable $v_1 \in (E_{\cG})_{g_1}$ and $v_2 \in (E_{\cG})_{g_2}$, by
\[
 (\xi_1 \circ \xi_2) (\tim (v_1,v_2) ) := \xi_1 (v_1) + \xi_2 (v_2).
\]
The core of $E_\cG^*$ coincides with the dual vector bundle $E_M^* \to M$ and its core-anchor with the dual map $E_M^* \to C^*$ of the core-anchor $C \to E_M$ of $E_\cG$. In particular, the cotangent VB-groupoid of a Lie groupoid (Example \ref{ex_cotangent_VB_groupoid}) is the dual VB-groupoid of its tangent VB-groupoid (Example \ref{ex_tangent_VB_groupoid}).
\end{defex}

\begin{ex}
A {\bf double vector bundle} (DVB) is a commutative diagram of vector bundles
\[\begin{tikzcd}
	E & B \\
	A & M
	\arrow["{\pi^E_A}", from=1-1, to=2-1]
	\arrow["{\pi^B}", from=1-2, to=2-2]
	\arrow["{\pi^E_B}", from=1-1, to=1-2]
	\arrow["{\pi^A}", from=2-1, to=2-2]
\end{tikzcd}\]
where all the structure maps (i.e.\ projections, additions, multiplications by scalars and zero sections) are vector bundle morphisms; see e.g.\ \cite[Chapter 9]{Mackenzie05} for further details. Any DVB is a VB-groupoid with rank $(\rank(\pi^E_B) - \rank (\pi^A), \rank (\pi^A))$, since any vector bundle is a bundle of (abelian) Lie groups, i.e.\ it is a special Lie groupoid with $s = t$. Moreover, its core coincides with the DVB core, i.e.\ the intersection $C = (\pi^A)^{-1} (0_M) \cap (\pi^B)^{-1} (0_M)$. 
\end{ex}

\begin{ex}[trivial core]\label{ex_VB_groupoid_trivial_core}
Given a Lie groupoid $\cG \tto M$ and a representation $E_M \to M$ of $\cG$, the action groupoid $\cG \times_M E_M \tto E_M$ defines a VB-groupoid of rank $(0,k)$, together with the induced vector bundle $\cG \times_M E_M \to \cG$:
\[\begin{tikzcd}
	{\cG \times_M E_M} & \cG \\
	{E_M} & M
	\arrow[from=1-1, to=2-1, shift left=.5ex, "\tis"]
	\arrow[from=1-1, to=2-1, shift right=.5ex, "\tit"']
	\arrow[from=1-2, to=2-2, shift left=.5ex, "s"]
	\arrow[from=1-2, to=2-2, shift right=.5ex, "t"']
	\arrow["{\pr_1}", from=1-1, to=1-2]
	\arrow[from=2-1, to=2-2]
\end{tikzcd}\]
By construction, the core of $\cG \times_M E_M$ is the zero-vector bundle. Conversely, any VB-groupoid $E_\cG \tto E_M$ with core $C = 0$ is isomorphic to the action groupoid $\cG \times_M E_M \tto E_M$, for the representation of $\cG$ on $E_M$ given by $g \cdot e := \tit_g (v)$, with $v \in (E_\cG)_g$ uniquely defined by $\tis_g (v) = e$.

Note that, if a tangent VB-groupoid $T\cG \tto TM$ (Example \ref{ex_tangent_VB_groupoid}) has rank $(0,k)$, i.e.\ its Lie algebroid $A \to M$ is the zero vector bundle, then $\cG \tto M$ must be an \'etale Lie groupoid. In this case, $T\cG \cong \cG \times_M TM$ for the canonical representation of any \'etale Lie groupoid $\cG \tto M$ on $TM$.
\end{ex}

The dual of a VB-groupoid with trivial core does not have trivial core but has trivial base. This means that such dual is not the VB-groupoid associated to the dual representation $E^*_M$. On the other hand, VB-groupoids with trivial base are encoded into representations of their core, as discussed in Example \ref{ex_VB_groupoid_trivial_base} below. Applying the general construction from Example \ref{ex_dual_VB_groupoid} to Example \ref{ex_VB_groupoid_trivial_core} one obtains precisely the VB-groupoid with trivial base associated to the dual representation $E^*_M$.

\begin{ex}[trivial base]\label{ex_VB_groupoid_trivial_base}
Given a Lie groupoid $\cG \tto M$ and a rank $l$ representation $C \to M$ of $\cG$, one has the following Lie groupoid structure on $\cG \times_M C \tto M \times \{0\}$:
\[
 \tis (g,c) = s(g), \quad \quad \tit (g,c) = t(g),
\]
\[
 \tim ((g_1,c_1), (g_2,c_2)) = (g_1 g_2, c_1 + g_2 \cdot c_2).
\]
This defines a VB-groupoid of rank $(l,0)$, together with the induced vector bundle $\cG \times_M C \to \cG$:
\[\begin{tikzcd}
	{\cG \times_M C} & \cG \\
	{M \times \{0\}} & M
	\arrow[from=1-1, to=2-1, shift left=.5ex, "\tis"]
	\arrow[from=1-1, to=2-1, shift right=.5ex, "\tit"']
	\arrow[from=1-2, to=2-2, shift left=.5ex, "s"]
	\arrow[from=1-2, to=2-2, shift right=.5ex, "t"']
	\arrow["{\pr_1}", from=1-1, to=1-2]
	\arrow[from=2-1, to=2-2]
\end{tikzcd}\]
Conversely, any VB-groupoid $E_\cG \tto E_M = M \times \{0\}$ with core $C$ is isomorphic to the Lie groupoid $\cG \times_M C \tto M \times 0$ discussed above, for the representation of $\cG$ on $C$ given by $g \cdot c = \tim (0_g, c)$.

Note that, if a tangent VB-groupoid $T\cG \tto TM$ (Example \ref{ex_tangent_VB_groupoid}) has rank $(l,0)$, i.e.\ $TM \to M$ is the zero vector bundle, then $M$ is $0$-dimensional. If we assume that $M$ is connected, then $G = \cG \tto \{*\}$ must be a Lie group and the core $C$ its Lie algebra $\g$. In this case, $TG \cong G \times \g$ for the adjoint representation of any Lie group $G$ on $\g$.
\end{ex}

\begin{ex}[pullback]\label{ex_pullback_VB_groupoid}
Given a vector bundle $\pi: E_M \to M$ of rank $k$ and a Lie groupoid $\cG \tto M$, the pullback groupoid of $\cG$ w.r.t.\ $\pi$ defines a VB-groupoid of rank $(k,k)$:
\[\begin{tikzcd}
	{E_M \tensor[_{\pi}]{\times}{_{t}} \cG \tensor[_{s}]{\times}{_{\pi}} E_M} & \cG \\
	{E_M} & M
	\arrow[from=1-1, to=2-1, shift left=.5ex, "\tis"]
	\arrow[from=1-1, to=2-1, shift right=.5ex, "\tit"']
	\arrow[from=1-2, to=2-2, shift left=.5ex, "s"]
	\arrow[from=1-2, to=2-2, shift right=.5ex, "t"']
	\arrow["{\pr_2}", from=1-1, to=1-2]
	\arrow["{\pi}", from=2-1, to=2-2]
\end{tikzcd}\]
The core of $E_M \tensor[_{\pi}]{\times}{_{t}} \cG \tensor[_{s}]{\times}{_{\pi}} E_M$ coincides with $E_M$ and its core-anchor is the identity $E_M \to E_M$.
\end{ex}

The following simple example will be crucial for carrying out our theory; further insights will be provided later.

\begin{defex}\label{ex_trivial_VB_groupoid}
 Any pair $(l,k)$ of natural numbers defines a VB-groupoid of rank $(l,k)$, which we call the {\bf canonical VB-groupoid} $\KR^{(l,k)}$. We will use here the notation $\GL(l,k)_0:=\Hom(\KR^l,\KR^k)$, the reasons of which will be clear in Definition \ref{def:trivial.gl.l.k}. The groupoid $\KR^{(l,k)}$ is given by
\[\begin{tikzcd}
\KR^{(l,k)}_2:=	\RR^l \times \RR^k \times \GL(l,k)_0 & \GL(l,k)_0 \\
\KR^{(l,k)}_1:=	\RR^k \times \GL(l,k)_0 & \GL(l,k)_0
	\arrow[from=1-1, to=2-1, shift left=.5ex, "\tis"]
	\arrow[from=1-1, to=2-1, shift right=.5ex, "\tit"']
	\arrow[from=1-2, to=2-2, shift left=.5ex]
	\arrow[from=1-2, to=2-2, shift right=.5ex]
	\arrow["{\pr}", from=1-1, to=1-2]
	\arrow["{\pr}", from=2-1, to=2-2]
\end{tikzcd}\]
Here the rows are the trivial vector bundles of ranks $l+k$ and $k$ over the manifold $\GL(l,k)_0$. Moreover, the groupoid on the left column is the action groupoid of the following (abelian) action of $\KR^l$ on $\KR^k\times \GL(l,k)_0$:
$$\KR^l\times\KR^k\times\GL(l,k)_0 \to \KR^k\times \GL(l,k)_0, \quad (w,v,d) \mapsto (d(w)+v,d).$$
Finally, the groupoid $\GL(l,k)_0 \tto\GL(l,k)_0$ 
on the right column is identified with the unit groupoid of $\GL(l,k)_0$. A VB-groupoid with this last property is actually equivalent to an anchored 2-vector bundle (see Lemma \ref{lemma_2_vector_bundles_2_VB_groupoids} below).

The core of $\RR^{(l,k)}$ is the trivial vector bundle of rank $l$ over $\GL(l,k)_0$, and the core-anchor is
\[
\RR^l \times \GL(l,k)_0 \to \RR^k \times \GL(l,k)_0, \quad (w,d) \mapsto (d(w),d). \qedhere
\]
\end{defex}

\subsection{2-vector bundles}

In the literature there are various ``higher'' generalisations of vector bundles, often induced from abstract categorical points of view. With our specific applications in mind, here we consider two notions which we will use in the rest of the paper, and which might have appeared elsewhere under different names.

\begin{defn}\label{def_2_vector_bundle}
A {\bf 2-graded vector bundle} is a pair $(E_1,E_0)$ of vector bundles over the same manifold $M$. An {\bf anchored 2-graded vector bundle} (or, shortly, an {\bf anchored 2-vector bundle}) is a 
2-graded vector bundle $(E_1,E_0)$ endowed with a vector bundle map $\delta \colon E_1 \to E_0$ over the identity $\id_M$, which we call {\bf anchor}. This is represented by the diagram
\[\begin{tikzcd}
	{E_1} & M \\
	{E_0} & M
	\arrow[from=1-1, to=1-2]
	\arrow["{\id_M}", from=1-2, to=2-2]
	\arrow["\delta"', from=1-1, to=2-1]
	\arrow[from=2-1, to=2-2]
\end{tikzcd}\]
and denoted, more compactly, by $E_1\xrightarrow{\delta} E_0\fto M$.
\end{defn}

Anchored 2-vector bundles generalise the standard notion of \textit{anchored vector bundle}, i.e.\ a vector bundle $E_1 \to M$ together with a vector bundle map $E_1 \to TM$; in our case, $TM$ is replaced by an arbitrary vector bundle $E_0 \to M$. From a different point of view, an anchored 2-vector bundle is simply a chain complex $(E_i,\delta_i)$ of vector bundles over the same base, with $E_i = 0$ for every $i \neq 0,1$.

Another reason for the adjective ``anchored'' comes from the following lemma, which is part of the folklore knowledge in the field; we are adding it here for the sake of self-containment. Note that this lemma can also be seen as a special instance of the Dold-Kan correspondence between chain complexes of vector bundles and simplicial vector bundles; in this case, the chain complexes have only two non-zero terms and the corresponding simplicial vector bundles are the nerves of VB-groupoids over the units.

\begin{lem}[anchored 2-vector bundles and VB-groupoids over the unit groupoid]\label{lemma_2_vector_bundles_2_VB_groupoids}
Any anchored 2-vector bundle $E_1 \xrightarrow{\delta} E_0 \to M$ induces a VB-groupoid over the unit groupoid $M \tto M$:
\[\begin{tikzcd}
	{E_1  \times_M E_0} & M \\
	{E_0} & M
	\arrow[from=1-1, to=2-1, shift left=.5ex, "\tis"]
	\arrow[from=1-1, to=2-1, shift right=.5ex, "\tit"']
	\arrow[from=1-2, to=2-2, shift left=.5ex]
	\arrow[from=1-2, to=2-2, shift right=.5ex]
	\arrow[from=1-1, to=1-2]
	\arrow[from=2-1, to=2-2]
\end{tikzcd}\]
where $E_1 \times_M E_0$ is the action groupoid of the following (abelian) groupoid action of $E_1\fto M$ on $E_0\fto M$:
$$ E_1 \times_M E_0 \to E_0, \quad (v_1,v_0) \mapsto \delta (v_1)+v_0.$$
Conversely, given a VB-groupoid $E_\CG\soutar E_M$ over the unit groupoid $\CG = M \tto M$, then $E_1:=C$ and $E_0:=E_M$ define an anchored 2-vector bundle, with anchor map equal to the core-anchor $\delta:=\tit|_{E_1} \colon E_1\fto E_0$ (Definition \ref{def_core}).
\end{lem}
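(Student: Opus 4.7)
The plan is to verify each of the two constructions separately and then to check that they are mutually inverse, which is what the word ``conversely'' implicitly claims.

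The forward direction is a routine verification. Given an anchored 2-vector bundle $E_1 \xrightarrow{\delta} E_0 \to M$, I would write down the candidate structure maps for $E_1 \times_M E_0 \tto E_0$: source $\tis(v_1,v_0) = v_0$, target $\tit(v_1,v_0) = \delta(v_1)+v_0$, unit $\tiu(v_0) = (0,v_0)$, multiplication $(v_1,v_0') \circ (v_1',v_0) = (v_1+v_1',v_0)$ (defined when $v_0' = \delta(v_1')+v_0$), and inverse $(v_1,v_0)^\gdi = (-v_1,\delta(v_1)+v_0)$. These are the standard formulas for the action groupoid of the abelian action in the statement, so the groupoid axioms are automatic. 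Each structure map is a vector bundle morphism over $\id_M$ because every formula is a linear combination of $\delta$, projections and sums, and the interchange law reduces to the commutativity of addition. The converse direction is essentially tautological: given a VB-groupoid $E_\cG \tto E_M$ over $M \tto M$, Definition \ref{def_core} produces a vector bundle $C \to M$ of rank $l$ and a vector bundle morphism $\rho = \tit|_C \colon C \to E_M$ over $\id_M$, which is by definition an anchored 2-vector bundle.

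To check that the two constructions are mutually inverse: passing from $(E_1,\delta,E_0)$ to the induced VB-groupoid $E_1 \times_M E_0 \tto E_0$ and back, the core is $\{(v_1,0)\} \cong E_1$ with core-anchor $(v_1,0) \mapsto \delta(v_1)$, so the initial data is recovered. In the opposite direction, given a VB-groupoid $E_\cG \tto E_M$ over $M \tto M$ with core $C$ and core-anchor $\rho$, the candidate isomorphism is
\[
\phi \colon C \times_M E_M \longrightarrow E_\cG, \qquad (c,e) \mapsto c + \tiu(e),
\]
with the sum taken fibrewise in $E_\cG \to M$. Since $\tiu(e) \in C$ forces $e = \tis(\tiu(e)) = 0$, the subspaces $C$ and $\tiu(E_M)$ intersect trivially in each fibre of $E_\cG$, so a rank count makes $\phi$ a vector bundle isomorphism over $M$.

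The main technical point, and the one I expect to be the real obstacle, is verifying that $\phi$ is also a groupoid morphism. Compatibility with $\tis$ and $\tit$ is immediate from linearity; for the multiplication one exploits the interchange law. Given composable $(c_1,e_1)$ and $(c_2,e_2)$ with $e_1 = \rho(c_2)+e_2$, one rewrites $c_1 + \tiu(e_1) = c_1 + \tiu(\rho(c_2)) + \tiu(e_2)$ using the linearity of $\tiu$, applies the interchange law to peel off the $\tiu(e_2)$ factor (whose product with itself is again $\tiu(e_2)$), and then uses it once more to reduce $(c_1 + \tiu(\rho(c_2))) \circ c_2$ to $c_1 + c_2$. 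The last identity relies on the fact that, since $\cG = M$ is the unit groupoid, each zero vector $0_x \in E_\cG$ is itself a unit arrow of $E_\cG \tto E_M$, so that $c_1 \circ 0_x = c_1$ and $\tiu(\rho(c_2)) \circ c_2 = c_2$; assembling these identities yields $\phi((c_1,e_1) \circ (c_2,e_2)) = \phi(c_1,e_1) \circ \phi(c_2,e_2)$.
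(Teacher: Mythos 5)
Your proof is correct. There is, however, nothing in the paper to compare it against: the authors explicitly present this lemma as ``part of the folklore knowledge in the field'' and state it without any proof, so your write-up fills a gap rather than parallels an existing argument. The forward and converse directions of your proposal verify precisely the constructions already named in the statement (the action-groupoid structure maps are fibrewise linear, the interchange law reduces to commutativity of addition, and the core together with the core-anchor is by definition an anchored 2-vector bundle), which is all the lemma literally asserts. The substantive extra content is your proof that the two constructions are mutually inverse: the isomorphism $\phi(c,e) = c + \tiu(e)$, the trivial-intersection-plus-rank-count argument giving the fibrewise splitting $E_\cG = C \oplus \tiu(E_M)$, and the two applications of the interchange law --- first peeling off $\tiu(e_2)$, then reducing $\bigl(c_1 + \tiu(\rho(c_2))\bigr)\circ c_2$ to $c_1 + c_2$ using that $0_x = \tiu(0)$ is a unit and that $\tis \circ \tiu = \id$ --- are all sound; the fact that the base groupoid is $M \tto M$ is exactly what forces all the terms involved into a single fibre, so the interchange law is legitimately applicable. (Preservation of units and inverses then comes for free, since a bijective multiplicative map of groupoids preserves them automatically.) This splitting technique is also consistent with how the paper handles the analogous decomposition elsewhere, e.g.\ $(E_\cG)_{1_x} = (E_M)_x \oplus \ker(\tit_{1_x})$ in the proof of Lemma \ref{prop_induced_u}, so your argument both supplies the missing proof and establishes slightly more than the bare statement.
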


\begin{rk}[2-graded vector bundles and VB-groupoids over the unit groupoid]
Note that a 2-graded vector bundle would not be enough to define the VB-groupoid structure described in Lemma \ref{lemma_2_vector_bundles_2_VB_groupoids}, since one needs the anchor map $\delta$ in order to define the groupoid structure on $E_1 \times_M E_0 \tto E_0$. Conversely, any VB-groupoid $E_\cG \tto E_M$ over the unit groupoid $M \tto M$ induces a 2-vector bundle $(E_1 := C,E_0 := E_M)$, where the anchor $\delta \colon E_1 \to E_0$ is the core-anchor $C \to E_M$.
\end{rk}

\begin{ex}[2-graded vector spaces]\label{ex_2-vector_spaces}
When $M = \{*\}$, a 2-anchored vector space $V_1 \xrightarrow{\delta} V_0$ is simply a 2-term chain complex of vector spaces, hence it is equivalent to the notion of 2-vector spaces introduced in \cite[Definition 3.1, Proposition 3.4]{BaezCrans04}, i.e.\ the structure underlying a strict Lie 2-algebra.

On the other hand, a 2-graded vector bundle boils down to a 2-graded vector space $(V_1,V_0)$. In particular, any pair $(l,k)$ of natural numbers defines the 2-graded vector bundle $(\KR^l,\KR^k)$ over a point.

Notice that there is no canonical anchor $\delta \colon \RR^l \to \RR^k$ making $(\KR^l,\KR^k)$ into an anchored 2-vector space.  Each choice of such an anchor would yield, via Lemma \ref{lemma_2_vector_bundles_2_VB_groupoids}, a VB-groupoid $\RR^l \times \RR^k \tto \RR^k$ over the trivial unit groupoid $\{*\} \tto \{*\}$, with core-anchor equal to $\delta$.
\end{ex}


\begin{defex}\label{ex_trivial_VB_groupoid_as_2-vector_bundle}
Any pair $(l,k)$ of natural numbers defines an anchored 2-vector bundle $E^{(l,k)}$ over $M = \Hom (\RR^l,\RR^k)$, which we call the {\bf canonical 2-anchored vector bundle}, given by the trivial vector bundles
\[ 
E^{(l,k)}_1 = \RR^l \times \Hom(\RR^l, \RR^k) \to \Hom(\RR^l, \RR^k), \quad \quad  E^{(l,k)}_0 = \RR^k \times \Hom(\RR^l, \RR^k) \to \Hom(\RR^l, \RR^k) 
\]
 and the anchor map
\[
\delta\colon E^{(l,k)}_1 \to E^{(l,k)}_0, \quad (v_1,d) \mapsto (d(v_1),d).
\]
Applying the correspondence from Lemma \ref{lemma_2_vector_bundles_2_VB_groupoids}), the canonical 2-anchored vector bundle $E^{(l,k)}$ can be equivalently described precisely by the canonical VB-groupoid $\RR^{(l,k)}$ from Example \ref{ex_trivial_VB_groupoid}.

More generally, any 2-graded vector bundle $(E_1,E_0)$ over $M$ defines an anchored 2-vector bundle $(E_1 \times_M \Hom(E_1,E_0), E_0 \times_M \Hom(E_1,E_0))$ with anchor given by
\[
\delta (v_1, d) = (d(v_1), d).
\]
Notice that, for $M$ a point and $(E_1,E_0) = (\RR^l,\RR^k)$, we recover precisely the canonical 2-anchored vector bundle $E^{(l,k)}$ described above.
\end{defex}

\begin{ex}\label{ex_vector_bundes_as_2VB}
Ordinary vector bundles can be viewed as anchored 2-vector bundles in three non-equivalent ways:
\begin{itemize}
 \item when $E_0 = M$ (i.e.\ $E_0$ has rank $0$) and $\delta$ coincides with the projection $E_1 \to M$;
 \item when $E_1 = M$ (i.e.\ $E_1$ has rank $0$) and $\delta \colon M \to E_0$ is the zero section;
 \item when $E_0 = E_1$ and $\delta = \id$. \qedhere
\end{itemize}
\end{ex}

\subsection{Lie 2-groupoids}

As for 2-vector bundles, Lie 2-groupoids have several inequivalent definitions in the literature, as discussed in Remark \ref{rk_strict_vs_weak} below. Here we follow e.g.\ \cite{MehtaTang11}, \cite{DelHoyoStefani19} and \cite{BrahicOrtiz19} and pick the notion which will appear naturally when looking at the frame bundle of a VB-groupoid.

In general, since we will consider different groupoid structures on the same space, we will from now on adopt the notation $\gds_{ij}, \gdt_{ij},\gdm_{ij},\gdu_{ij},\gdi_{ij}$ for the structure maps of a Lie groupoid $\CH_i\soutar\CH_j$; for $\CH_i\soutar M$ we will use $\gds_{M}, \gdt_{M},\gdm_{M},\gdu_{M},\gdi_{M}$.

\begin{defn}\label{def_Lie_2_groupoid}\label{rk_Lie_2_groupoids_double_groupoids}
	A {\bf double Lie groupoid}  (see e.g.\ \cite{BrownMackenzie92} or \cite[Definition 3.1 and 3.2]{MehtaTang11}) is a commutative diagram of Lie groupoids
	\[\begin{tikzcd}
		{\CH_2} & \CH_1 \\
		{\CH_0} & M
		\arrow[from=1-1, to=2-1, shift left=.5ex]
		\arrow[from=1-1, to=2-1, shift right=.5ex]
		\arrow[from=1-2, to=2-2, shift left=.5ex]
		\arrow[from=1-2, to=2-2, shift right=.5ex]
		\arrow[from=1-1, to=1-2, shift left=.5ex]
		\arrow[from=1-1, to=1-2, shift right=.5ex]
		\arrow[from=2-1, to=2-2, shift left=.5ex]
		\arrow[from=2-1, to=2-2, shift right=.5ex]
	\end{tikzcd}\]
	where the following three conditions are satisfied:
	\begin{enumerate}
		\item  all the source and targets maps are Lie groupoid morphisms;
		\item  the interchange law
		$$(g_1 \circ_{m_{20}} g_2)\circ_{m_{21}}(g_3 \circ_{m_{20}} g_4)=(g_1 \circ_{m_{21}} g_3)\circ_{m_{20}}(g_2 \circ_{m_{21}} g_4)$$
		holds for all $g_i\in \CH_2$ such that the compositions above make sense;
		\item the map $(\gds_{21},\gds_{20})\colon\CH_2\fto \CH_1 \tensor[_{\gds_{M}}]{\times}{_{\gds_{M}}} \CH_0$ is a surjective submersion.
	\end{enumerate}

A {\bf Lie 2-groupoid} $\CH_2 \tto \CH_1 \tto \CH_0$ (see e.g.\ \cite[Example 3.7]{MehtaTang11}) is a double Lie groupoid where the base groupoid $\CH_0 \tto M=\CH_0$ is the unit groupoid. In other words it is a commutative diagram of Lie groupoids
 \[\begin{tikzcd}
	{\CH_2} && {\CH_1} \\
	& {\CH_0}
	\arrow[from=1-1, to=1-3, shift left=.5ex]
	\arrow[from=1-1, to=1-3, shift right=.5ex]
	\arrow[from=1-3, to=2-2, shift left=.5ex]
	\arrow[from=1-3, to=2-2, shift right=.5ex]
	\arrow[from=1-1, to=2-2, shift left=.5ex]
	\arrow[from=1-1, to=2-2, shift right=.5ex]
\end{tikzcd}\]
such that conditions 1 and 2 above hold (condition 3 holds automatically, since $\CH_1 \tensor[_{\gds_{M}}]{\times}{_{\gds_{M}}} \CH_0\cong \CH_1$ and the map $(\gds_{21},\gds_{20})$ reduces to the surjective submersion $\gds_{21}$). See also \cite[Section 4]{DelHoyoStefani19} and \cite[Section 4.1]{BrahicOrtiz19} for an equivalent formulation in terms of 2-categories.
\end{defn}

\begin{rk}[strict vs weak Lie 2-groupoids]\label{rk_strict_vs_weak}
The object we described above is also known as {\it strict} Lie 2-groupoid, in order to distinguish it from  {\it weak} Lie 2-groupoids, defined as simplicial manifolds satisfying a horn-filling condition; see e.g.\ \cite{Zhu09} for the precise definition and further references,
and \cite[Sections 2-4]{DelHoyoStefani19} for a comparison between the two notions. In this paper we will never use the simplicial definition, and by ``Lie 2-groupoid'' we will always refer to the stricter notion from \ref{def_Lie_2_groupoid}. The same holds for Lie-2-groups, recalled in Definition \ref{def:lie2grp} below.
\end{rk}

\begin{rk}[anchored 2-vector bundles vs Lie 2-groupoids]
 While every vector bundle $\pi: E \to M$ is automatically a Lie groupoid with $s = t = \pi$, an anchored 2-vector bundle $E_1 \xrightarrow{\delta} E_0 \to M$ (Definition \ref{def_2_vector_bundle}) is not \textit{automatically} a Lie 2-groupoid (Definition \ref{def_Lie_2_groupoid}). Indeed, the vector bundles $E_1 \to M$ and $E_0 \to M$ are Lie groupoids (being bundles of abelian groups), but the vector bundle map $\delta \colon E_1 \to E_0$ is simply a Lie groupoid morphism, i.e.\ it is not the same thing as a Lie groupoid structure, since we lack a splitting of $\delta$ to define a groupoid unit. On the other hand, by Lemma \ref{lemma_2_vector_bundles_2_VB_groupoids} any anchored 2-vector bundle $E_1 \xrightarrow{\delta} E_0 \to M$ \textit{induces canonically} a VB-groupoid $E_1\times_M E_0\soutar E_0$ over the unit groupoid, which can be therefore seen, in view of Definition \ref{rk_Lie_2_groupoids_double_groupoids}, as a special double Lie groupoid over the unit groupoid, i.e.\ a special Lie 2-groupoid.
\end{rk}

\begin{ex}[Lie groupoids]\label{ex_Lie_groupoids_are_Lie_2_groupoids}
 Ordinary Lie (1-)groupoids can be viewed as Lie 2-groupoids when $\CH_1 = \CH_2$.
\end{ex}

\begin{ex}[Lie groups]\label{ex_Lie_groups_are_Lie_2_groupoids}
 Ordinary Lie (1-)groups can be viewed as Lie 2-groupoids when $\CH_1 = \CH_2$ and $\CH_0 = \{*\}$. This is of course a special instance (for $\CH_0 = \{*\}$) of Example \ref{ex_Lie_groupoids_are_Lie_2_groupoids}.
\end{ex}

Notice that, given a Lie 2-groupoid $\cH_2 \tto \cH_1 \tto \cH_0$, the condition $\CH_0 = \CH_1$ is not sufficient to obtain a Lie (1-)groupoid: indeed, the interchange law would not be satisfied unless $\cH_2$ is abelian. Furthemore, $\cH_2$ would be forced to be a bundle of Lie groups: indeed, $t_{21}$ is a groupoid morphism over $\id_{\cH_0}$, hence $s_{20} = s_{10} \circ t_{21}$. But in the case $\cH_1 = \cH_0$ we have $s_{10} = t_{10}$, therefore $s_{20} = t_{10} \circ t_{21} = t_{20}$. For the same reason, the condition $\CH_0 = \CH_1 = \{*\}$ is not sufficient to obtain a Lie (1-)group.

\begin{defn}\label{def:lie2grp}
	A \textbf{Lie $2$-group} is a Lie $2$-groupoid where $\CH_0$ is a singleton; see  e.g.\ \cite[section 3.1]{GarmendiaZambon21}
	for a more explicit definition.

	Given any Lie $2$-groupoid $\CH_2\soutar\CH_1\soutar\CH_0$ and an element $d\in\CH_0$, then the pair $\left((\CH_2)_d, (\CH_1)_d\right)$ of isotropy groups forms a Lie $2$-group, called the \textbf{isotropy $2$-group} at $d$.
	\end{defn}

Notice that the isotropy 2-group at $d$ can be defined replacing $(\cH_2)_d$ by the set of elements of $\cH_2$ starting and ending at points in $(\cH_1)_d$. This is a consequence of the definition of Lie 2-groupoid, more precisely, the facts that $t_{20} = t_{10} \circ t_{21} = t_{10} \circ s_{21}$ and $s_{20} = s_{10} \circ s_{21} = s_{10} \circ t_{21}$.


\begin{ex}[crossed modules]
 A {\bf crossed module of Lie groups} is a pair of Lie groups $(H,G)$ together with a map $d\colon H\fto G$ and an action $C\colon G\fto {\rm Aut}(H)$, satisfying, for any $g\in G$ and $h,h'\in H$,
 $$d (C_g h) =gd(h) g^{-1} \y C_{d(h)}h'=hh'h^{-1}. $$ 

 It is well known in the literature that Lie 2-groups and crossed modules of Lie groups are in 1-1 correspondence; in the set-theoretical setting, the first published proof seems to be \cite[Theorem 1]{BrownSpencer76}. Let us remind the reader of this correspondence, which makes crossed modules a special case of Lie 2-groupoids.

 Given a crossed module $(H,G,d,C)$, one defines a Lie 2-groupoid $H\times G\soutar G\soutar \{*\}$, where the groupoid structure on $H\times G\soutar G$ is the action groupoid of the canonical action of $H$ in $G$ given by $(h,g)\mapsto d(h) g$, and the groupoid structure of $H\times G\soutar \{*\}$ is given by the semidirect product $H\rtimes G$ induced by $C$.
 
 On the other hand, given a Lie 2-group $\CH_2\soutar \CH_1\soutar \{*\}$, the pair $(H=\ker(\gds_{21}),G=\CH_1)$ is a crossed module, together with $d: =\gdt_{21}|_H\colon H\fto G$ and the conjugate action $C$ of the Lie subgroup $\gdu(G)\subset (\CH_2\soutar \{*\})$  on $H\subset \CH_2$, i.e.\ for all $g\in G$ and $h\in H$,
 \[
C_g h := (\gdu_{21}(g))\,\circ_{20} h \circ_{20} \,(\gdu_{21}(g))^{\tau_{20}}. \qedhere  
 \]
\end{ex}

The following example of Lie 2-groupoid will be fundamental in the rest of the paper.

\begin{defex}\label{ex_GL_2_groupoid}\label{def:trivial.gl.l.k} Any pair $(l,k)$ of natural numbers defines a Lie 2-groupoid $\GL(l,k)$, which we call the \textbf{general linear 2-groupoid of rank $(l,k)$}, given by
\[\begin{tikzcd}
\GL(l,k)_2 := \left\{ \left( d, 
\left(
\begin{matrix}
	A & JB \\
	0 & B
\end{matrix}
\right)
 \right) \in \Hom(\KR^l,\KR^k)\times \GL(l+k) \st (I_l+Jd)\in \GL(l) \y (I_k+dJ)\in \GL(k) \right\}
\\
  \GL(l,k)_1 := \Hom (\RR^l,\RR^k) \times \GL(l) \times \GL(k), 
\\
\GL(l,k)_0 := \Hom (\RR^l, \RR^k).
\arrow[from=1-1, to=2-1, shift left=.9ex]
\arrow[from=1-1, to=2-1, shift right=.9ex]
\arrow[from=2-1, to=3-1, shift left=.9ex]
\arrow[from=2-1, to=3-1, shift right=.9ex]
\end{tikzcd}\] 

\begin{itemize}
	\item The groupoid structure on $\GL(l,k)_2 \soutar \GL(l,k)_0$ is the unique one with
	 source and target maps $\gdt_{20},\gds_{20}\colon \GL(l,k)_2 \fto \GL(l,k)_0$ given by:
		\[\begin{tikzcd}
		d & \left( d, \left(
		\begin{matrix}
		A & JB \\
		0 & B
		\end{matrix}
		\right)\right) & ((I+dJ)B)^{-1}dA
		\arrow[from=1-2, to=1-1, maps to, swap, "\gdt_{20}"]
		\arrow[from=1-2, to=1-3, maps to, "\gds_{20}"]
		\end{tikzcd},\]
	 and multiplication map $\gdm_{20} \colon \GL(l,k)_2 \tensor[_{\gds_{20}}]{\times}{_{\gdt_{20}}} \GL(l,k)_2 \to \GL(l,k)_2$ given by the matrix multiplication:
		\[\begin{tikzcd}
		\left(\,\,\left( d, 
		\left(
		\begin{matrix}
		A & JB \\
		0 & B
		\end{matrix}
		\right)
		\right)\,\, ,\,\, \left( \gdt_{20}\left( d, 
		\left(
		\begin{matrix}
		A & JB \\
		0 & B
		\end{matrix}
		\right)\right)\,,\, 
		\left(
		\begin{matrix}
		A' & J'B' \\
		0 & B'
		\end{matrix}
		\right)
		\right)\,\,\right) 
		& \left( d, 
		\left(
		\begin{matrix}
		AA' & (AJ'B^{-1}+J)BB' \\
		0 & BB'
		\end{matrix}
		\right)
		\right).
		\arrow[from=1-1, to=1-2, mapsto, "\gdm_{20}"]
		\end{tikzcd}\]

\item The groupoid structure on $\GL(l,k)_2 \tto \GL(l,k)_1$ is the unique one with
source and target maps $\gdt_{21},\gds_{21}\colon \GL(l,k)_2 \fto \GL(l,k)_1$ given by:
	\[\begin{tikzcd}
	(d\,,\, A\,,\, (I+dJ)B) & \left( d \,,\, 
	\left(
	\begin{matrix}
	A & JB \\
	0 & B
	\end{matrix}
	\right)
	\right) & (d\,,\, (I+Jd)^{-1}A\,,\, B)
	\arrow[from=1-2, to=1-1, maps to, swap, "\gdt_{21}"]
	\arrow[from=1-2, to=1-3, maps to, "\gds_{21}"]
	\end{tikzcd},\]
and multiplication map $\gdm_{21} \colon \GL(l,k)_2 \tensor[_{\gds_{21}}]{\times}{_{\gdt_{21}}} \GL(l,k)_2 \to \GL(l,k)_2$ given by:
	\[\begin{tikzcd}
	\left(\,\,\left( \,d, 
	\left(
	\begin{matrix}
	A & J(I+dJ')B' \\
	0 & (I+dJ')B'
	\end{matrix}
	\right)\,
	\right) \,,\, \left( \,d, 
	\left(
	\begin{matrix}
	(I+Jd)^{-1}A & J'B' \\
	0 & B'
	\end{matrix}
	\right)\,
	\right)\,\, \right)
	& \left( d, 
	\left(
	\begin{matrix}
	A & (JdJ'+J+J')B' \\
	0 & B'
	\end{matrix}
	\right)
	\right) 
		\arrow[from=1-1, to=1-2, mapsto, "\gdm_{21}"]
	\end{tikzcd}.\]
	\item The groupoid $\GL(l,k)_1 \tto \GL(l,k)_0$ is given by the action groupoid of the canonical right action of $\GL(l)\times\GL(k)$ on $\GL(l,k)_0$, given by $d \cdot (A,B) := B^{-1} \circ d \circ A$, i.e.
	\[\begin{tikzcd}
		d  & (d,A,B)\ar[l,maps to,swap,  "\gdt_{10}"]  \ar[r, maps to, "\gds_{10}"] & B^{-1} \circ d \circ A
	\end{tikzcd}.\]
\end{itemize}

Moreover, notice that the isotropy Lie $2$-group $((\GL(l,k)_2)_d, (\GL(l,k)_1)_d) $ of $\GL(l,k)$ at any element $d\in \GL(l,k)_0$, as in Definition \ref{def:lie2grp}, recovers precisely the data of the crossed module $(K_1,K_0)$ associated to $d$ defined in \cite[Section 3.1]{ShengZhu12}, which was introduced to integrate Lie 2-algebras.
\end{defex}

\begin{defex}\label{ex_general_linear_2_groupoid}
Given any 2-graded vector bundle $E = (E_1,E_0)$ over $X$, its {\bf general linear 2-groupoid} $\GL(E)$ is the Lie 2-groupoid defined as follows:
\begin{itemize}
	\item $\GL(E)_0=\Hom(E_1,E_0)$;
	\item an element of $\GL(E)_1$ with source $d_x\colon (E_1)_x\fto (E_0)_x$ and target $d_y\colon (E_1)_y\fto (E_0)_y$ is a couple of linear isomorphisms $A\colon (E_1)_x\fto (E_1)_y$ and $B\colon  (E_0)_x\fto (E_0)_y$ such that $d_y\circ A= B\circ d_x$; 
	\item an element of $\GL(E)_2$ with source $(d_x,d_y,A,B)$ and target  $(d_x,d_y,A',B')$ is given by a map $J\colon (E_0)_x\fto (E_1)_y$ such that $J\circ d_x= A-A'$ and $d_y \circ J=B-B'$.
\end{itemize}
The structure maps of $\GL(E)$ are analogous to those of $\GL(l,k)$ from Definition \ref{ex_GL_2_groupoid}; for more details, see \cite[Section 5]{DelHoyoStefani19} and \cite[Section 4.3]{BrahicOrtiz19}, where this concept was first introduced (see also Remark \ref{rk_general_linear_2_groupoid} below). More precisely, in \cite[Remark 5.6]{DelHoyoStefani19} the Lie 2-groupoid $\GL(E)$ was denoted by $\GL^\prime(E)$, in order to distinguish it from a similar notion which allows more general arrows, while in \cite{BrahicOrtiz19} it was called 2-gauge groupoid and denoted by 2-$\mathrm{Gau}(E)$. 
\end{defex}

\begin{rk}[General linear 2-groupoid and representations up to homotopy]\label{rk_general_linear_2_groupoid}

The Lie 2-groupoid $\GL(E)$ was introduced in \cite{DelHoyoStefani19}  in order to interpret 2-term representations up to homotopy (RUTHs) of a Lie groupoid $\cG$ on a 2-graded vector bundle $E$ as morphisms $\cG \to \GL(E)$, in analogy to what happens for ordinary representations of Lie groupoids on a vector bundle. Moreover, $\GL(E)$ is actually equivalent to the gauge 2-groupoid 2-$\mathrm{Gau}(E)$ from \cite[Section 4.3]{BrahicOrtiz19}, which was used in order to integrate a 2-term RUTH of a Lie algebroid to a 2-term RUTH of a Lie groupoid.
The general idea of ``representing'' RUTHs was first announced in \cite{Mehta14}.
\end{rk}

\begin{ex}
	Applying the construction of Example \ref{ex_general_linear_2_groupoid} to the canonical 2-graded vector space $(\KR^l,\KR^k)$ over $X=\{*\}$ yields precisely the general Lie 2-groupoid $\GL(l,k)$ from Example \ref{ex_GL_2_groupoid}.

	On the other hand, the general linear 2-groupoid of the 2-graded vector bundle underlying the canonical anchored 2-vector bundle $E^{(l,k)}$ from Example \ref{ex_trivial_VB_groupoid_as_2-vector_bundle} is intuitively $\GL(l,k)$ ``enlarged'' by multiplying by a suitable amount of copies of $\Hom (\RR^l,\RR^k)$; more precisely,
	\begin{itemize}
		\item $\GL(E^{(l,k)})_0= \GL(l,k)_0 \times \Hom (\RR^l,\RR^k)$;
		\item $\GL(E^{(l,k)})_i\cong \GL(l,k)_i \times \Hom (\RR^l,\RR^k)\times  \Hom (\RR^l,\RR^k)$ for $i\in\{1,2\}$.
	\end{itemize}
	
	It is also interesting to consider the case of a 2-graded vector bundle $E$ where either $E_0 \to X$ or $E_1 \fto X$ has rank $0$ (Example \ref{ex_vector_bundes_as_2VB}); then $E$ is an ordinary vector bundle and its general linear 2-groupoid recovers the general linear (1-)groupoid $\GL(E) \tto X$, called sometimes the gauge or Atiyah groupoid of $E$.
\end{ex}

\section{From PB-groupoids to VB-groupoids}

\subsection{2-Actions}

In order to define a PB-groupoid, we discuss first actions of Lie 2-groupoids on Lie groupoids along pairs of smooth maps, which generalise ordinary actions of Lie groupoids on manifolds along smooth maps (see e.g.\ \cite[Section 5.3]{MoerdijkMrcun03}). For other particular cases of 2-actions present in the literature, we refer to the works cited in section \ref{sec_PB_groupoids}.

\begin{defn}\label{def_action_Lie_2_groupoid}
A {\bf 2-action} of a Lie 2-groupoid $\CH_2 \tto \CH_1 \tto \CH_0$ (Definition \ref{def_Lie_2_groupoid}) on a Lie groupoid $P_2 \tto P_1$ consists of
\begin{itemize}
 \item two smooth maps $\mu_2 \colon P_2 \to \CH_0$ and $\mu_1 \colon P_1 \to \CH_0$;
 \item a (right) action of $\CH_2 \tto \CH_0$ on $P_2$ along $\mu_2$;
 \item a (right) action of $\CH_1 \tto \CH_0$ on $P_1$ along $\mu_1$;
 \end{itemize}
 such that
 
 \begin{enumerate}
 	\item for any $p \in P_2$ one has $\mu_2(p)=\mu_1(\bs(p))=\mu_1(\bt(p))$;
 	\item the action maps define a Lie groupoid morphism
 	\[\begin{tikzcd}
 		{P_2 \tensor[_{\mu_2}]{\times}{_{t_{20}}} \CH_2} & P_2 \\
 		{P_1 \tensor[_{\mu_1}]{\times}{_{t_{10}}} \CH_1} & P_1
 		\arrow[from=1-1, to=2-1, shift left=.5ex, "{(\bs, s_{21})}"]
 		\arrow[from=1-1, to=2-1, shift right=.5ex, "{(\bt, t_{21})}"']
 		\arrow[from=1-2, to=2-2, shift left=.5ex, "\bs"]
 		\arrow[from=1-2, to=2-2, shift right=.5ex, "\bt"']
 		\arrow[from=1-1, to=1-2, "\Phi"]
 		\arrow[from=2-1, to=2-2, "\varphi"]
 	\end{tikzcd}\]
  where on the left column the Lie groupoid structure is given by restricting the product groupoid ${P_2 \times \CH_2}\soutar P_1\times \CH_1$. \qedhere
 \end{enumerate}  
\end{defn}

Note that the submanifold ${P_2 \tensor[_{\mu_2}]{\times}{_{t_{20}}} \CH_2} \subseteq P_2 \times \CH_2$ defines a subgroupoid of the product groupoid ${P_2 \times \CH_2}\soutar P_1\times \CH_1$ precisely because $\mu_2$ and $\mu_1$ satisfy condition 1. Moreover, the action groupoid $P_2 \tensor[_{\mu_2}]{\times}{_{t_{20}}} \CH_2\soutar P_2$ defines a double Lie groupoid over the action groupoid $P_1 \tensor[_{\mu_1}]{\times}{_{t_{10}}} \CH_1\soutar P_1$ precisely because the action maps $(\Phi,\phi)$ form a Lie groupoid morphism.

\begin{ex}[actions of Lie 2-groupoids on themselves]\label{ex_action_Lie_2_groupoid_on_itself}
Any Lie 2-groupoid $\CH_2 \tto \CH_1 \tto \CH_0$ acts on $\CH_2 \tto \CH_1$, say from the right, with $\mu_2 = s_{20}$ and $\mu_1 = s_{10}$. Indeed, the action is given by the standard Lie groupoid actions of $\CH_2 \tto \CH_0$ on $\CH_2$ along $\mu_2$ and of $\CH_1 \tto \CH_0$ on $\CH_1$ along $\mu_1$. Condition 1 of Definition \ref{def_action_Lie_2_groupoid} is automatically fulfilled by the fact that a Lie 2-groupoid is a commutative diagram, while condition 2 follows from the interchange law.
\end{ex}

\begin{ex}[actions of Lie groupoids]\label{ex_Lie_2_action_by_Lie_groupoid_case_2}
 If $\CH_1 = \CH_2$ (Example \ref{ex_Lie_groupoids_are_Lie_2_groupoids}), then condition 1 in Definition \ref{def_action_Lie_2_groupoid} becomes trivial while condition 2 implies the following special property: every global bisection $b$ of $\CH_2 \tto \CH_0$ defines a Lie groupoid automorphism of $P_2 \tto P_1$, given by
 \[
 P_2 \to P_2, \quad p_2 \mapsto p_2 \cdot b (\mu_2 (p_2)),
 \]
 \[
  P_1 \to P_1, \quad p_1 \mapsto p_1 \cdot b  (\mu_1 (p_1)).
 \]
 Here with bisection we mean a $t$-section $b$ such that $s\circ b$ is a diffeomorphism. \\
 In particular, if $P_2 = P_1$ (i.e.\ $P$ is the unit groupoid), Definition \ref{def_action_Lie_2_groupoid} boils down to an action of a Lie groupoid on a manifold along $\mu_2 = \mu_1$, and the automorphism above becomes the standard diffeomorphism $P \cong P$ associated to any $\CH_2$-action on $P$ and any bisection $b \in \mathrm{Bis}(\cG)$.
\end{ex}


\begin{ex}[actions of Lie groups]
If $\CH_2 = \CH_1 = G$ and $\CH_0 = \{*\}$ (Example \ref{ex_Lie_groups_are_Lie_2_groupoids}), condition 1 in Definition \ref{def_action_Lie_2_groupoid} becomes trivial while condition 2 implies the following special property: every element $g \in G$ determines a Lie groupoid isomorphism of $P_2 \tto P_1$, simply given by $(\Phi (\cdot, g), \varphi (\cdot, g))$.

In particular, if $P_2 = P_1$ (i.e.\ $P$ is the unit groupoid), Definition \ref{def_action_Lie_2_groupoid} boils down to an action of a Lie group on a manifold, and the isomorphism above becomes the standard diffeomorphism $P \cong P$ associated to any $G$-action on $P$ and any element $g \in G$.

 These are of course special instances (for $\CH_0 = \{*\}$) of the cases examined in Example \ref{ex_Lie_2_action_by_Lie_groupoid_case_2}.
\end{ex}


\begin{ex}[actions of Lie 2-groups]
If $\CH_0 = \{*\}$, i.e.\ $(\CH_2 = G_2, \CH_1 = G_1)$ is a Lie 2-group (Definition \ref{def:lie2grp}), then Definition \ref{def_action_Lie_2_groupoid} boils down to an action of a Lie 2-group $G_2 \tto G_1$ on a Lie groupoid, i.e.\ a $G_2$-action on $P_2$ and a $G_1$-action on $P_1$ such that $(\Phi,\phi)$ is a Lie groupoid morphism (since condition 1 becomes trivial). This is also called a {\it strict action} in \cite[Section 14]{CattaneoZambon13}.
\end{ex}

\begin{rk}[actions of bundles of abelian Lie groups]\label{ex_Lie_2_action_by_Lie_groupoid_case_1}
If $\CH_0 = \CH_1$ (as briefly discussed after Example \ref{ex_Lie_groupoids_are_Lie_2_groupoids} and \ref{ex_Lie_groups_are_Lie_2_groupoids}), then Definition \ref{def_action_Lie_2_groupoid} boils down to an action $\Phi$ of the bundle of abelian Lie groups $\cH_2 \tto \cH_0$ on $P_2$ which is completely 
encoded by the product $\cdot$ in $P_2$ and by a map $\phi \colon P_1 \times \CH_2 \to P_2$ taking values in the isotropy group bundle of $P_2 \tto P_1$.

Indeed, the $\CH_1$-action $\varphi$ is forced to be trivial by the commutativity of the diagram and the fact that $\gdu_{21} = \gdu_{20}$:
\[
 \varphi (p_1,g_1) = \varphi ( (\bs, s_{21}) (\bu(p_1), \gdu_{21}(g_1)) ) = \bs (\Phi (\bu (p_1), \gdu_{20}(g_1) ) ) = \bs (\bu (p_1)) = p_1.
\]
Moreover, 
\[
 \Phi (p,g) = \Phi \Big( (p, \gdu_{20}(\mu_2(p)) ) \circ (\bu (\bs(p)), g) \Big) = \Phi (p, \gdu_{20}(\mu_2(p)) ) \cdot \Phi (\bu (\bs(p)), g) = p \cdot \phi (\bs(p), g).
\]
where we set $\phi (p_1, g) := \Phi (\bu (p_1), g)$. Last, $\phi$ takes automatically values in the isotropy group bundle of $P_2 \tto P_1$.
\end{rk}


Given an action of a Lie group $G$ on a manifold $M$, each element $g \in G$ gives rise to a diffeomorphism $M \to M$. This phenomenon holds also in more generality, with actions of Lie 2-groupoids on Lie groupoids, provided we replace $g$ with a section and the group $\Diff(M)$ with a suitable group. This fact is discussed in the proposition below, which will not be used elsewhere in the paper.

\begin{prop}\label{prop_action_via_sections_and_diffeos}
	Given a Lie 2-groupoid $\CH_2\soutar\CH_1\soutar \CH_0$ acting on a Lie groupoid $P_2\soutar P_1$ with moment map $\mu: P_2 \to \cH_0$, any bisection $b\colon \CH_0\fto \CH_2$ gives rise to a diffeomorphism
\[
\phi_b: P_2 \to P_2, \quad \mathfrak{p} \mapsto \mathfrak{p} \cdot (b(\mu(\mathfrak{p})) ).
\]
Moreover,
\begin{itemize}
 \item $\phi$ is an affine morphism between Lie groupoids (see below);
 \item if $b$ takes values in the units of $\CH_2 \tto \CH_1$, then $\phi$ is an actual Lie groupoid morphism.
\end{itemize}
\end{prop}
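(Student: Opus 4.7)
The plan is to verify the three assertions in turn, leveraging the 2-action axioms of Definition \ref{def_action_Lie_2_groupoid} together with the compatibilities between the pairs $(\gds_{21}, \gdt_{21})$ and $(\gds_{20}, \gdt_{20})$ built into a double Lie groupoid. For the diffeomorphism claim, I would follow the standard ``translation by a bisection'' construction: since $b$ is a bisection of $\CH_2 \soutar \CH_0$, the map $\gds_{20} \circ b$ is a diffeomorphism of $\CH_0$, and one forms the inverse bisection $b^{-1} := \gdi_{20} \circ b \circ (\gds_{20} \circ b)^{-1}$. A direct check from the action identities gives $\phi_b \circ \phi_{b^{-1}} = \id_{P_2} = \phi_{b^{-1}} \circ \phi_b$, producing a smooth inverse.

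For the affine-morphism claim, I would compute the interaction of $\phi_b$ with $\bs, \bt$ and the multiplication of $P_2 \soutar P_1$. Condition 2 of Definition \ref{def_action_Lie_2_groupoid} says that $(\Phi, \varphi)$ is a Lie groupoid morphism, whence
\[
\bs(\phi_b(\mathfrak{p})) = \bs(\mathfrak{p}) \cdot \gds_{21}(b(\mu(\mathfrak{p}))) = \phi_{\gds_{21} \circ b}(\bs(\mathfrak{p})),
\]
and analogously $\bt \circ \phi_b = \phi_{\gdt_{21} \circ b} \circ \bt$. The double-groupoid identities $\gds_{10} \circ \gds_{21} = \gds_{20}$, $\gdt_{10} \circ \gds_{21} = \gdt_{20}$, and likewise for $\gdt_{21}$, ensure that $\gds_{21} \circ b$ and $\gdt_{21} \circ b$ are themselves bisections of $\CH_1 \soutar \CH_0$, so the two induced base diffeomorphisms of $P_1$ are well defined. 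This is presumably the intended content of ``affine morphism'': a map whose source-base and target-base maps may a priori differ.

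For the genuine morphism claim, assume that $b$ takes values in the units of $\CH_2 \soutar \CH_1$. Then $\gds_{21} \circ b = \gdt_{21} \circ b =: \tilde b$, so the two base maps agree, and for composable $\mathfrak{p}_1, \mathfrak{p}_2 \in P_2$ the images $\phi_b(\mathfrak{p}_1), \phi_b(\mathfrak{p}_2)$ remain $P_2$-composable. Since $\mu$ is constant on composable chains, $b(\mu(\mathfrak{p}_1)) = b(\mu(\mathfrak{p}_2))$ is a single unit of $\CH_2 \soutar \CH_1$, hence idempotent under $\circ_{21}$. The morphism identity $(\mathfrak{p}_1 \cdot h_1) \circ (\mathfrak{p}_2 \cdot h_2) = (\mathfrak{p}_1 \circ \mathfrak{p}_2) \cdot (h_1 \circ_{21} h_2)$ then gives $\phi_b(\mathfrak{p}_1) \circ \phi_b(\mathfrak{p}_2) = \phi_b(\mathfrak{p}_1 \circ \mathfrak{p}_2)$, and preservation of the $P_2$-units is the same identity applied to the units of the semi-direct product. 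The only real obstacle I anticipate is pinning down the exact technical meaning of ``affine morphism'' deferred by the authors; once that is settled, every step reduces to bookkeeping with the interchange law and the double-groupoid compatibility relations.
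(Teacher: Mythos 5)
Your handling of the first and third claims is correct and coincides with the paper's proof: your inverse bisection gives exactly the paper's smooth inverse $\mathfrak{p}\mapsto\mathfrak{p}\cdot\gdi_{20}\bigl(b\bigl((\gds_{20}\circ b)^{-1}(\mu(\mathfrak{p}))\bigr)\bigr)$, and your argument for multiplicativity when $b$ is unit-valued (constancy of $\mu$ along composable chains, idempotency of units under $\circ_{m_{21}}$, and condition 2 of Definition \ref{def_action_Lie_2_groupoid}) is the paper's argument.

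The genuine gap is the second claim. An affine morphism is defined in the paper (immediately after the statement) not as you guessed --- ``a map whose source-base and target-base maps may a priori differ'' --- but as a map admitting a factorization $\phi(\mathfrak{p})=\varphi_0(\mathfrak{p})\circ_{P_2}\sigma(\,\cdot\,)$, where $\varphi_0$ is a genuine Lie groupoid morphism and $\sigma$ is a section: a morphism followed by a translation, in analogy with affine maps $v\mapsto A(v)+w_0$ between vector spaces. Your identities $\bs\circ\phi_b=\phi_{\gds_{21}\circ b}\circ\bs$ and $\bt\circ\phi_b=\phi_{\gdt_{21}\circ b}\circ\bt$ are correct, but they do not produce such a factorization, so the affine claim remains unproved. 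The missing construction is to split $b$ into its ``unit part'' and itself: set $b_1:=\gdu_{21}\circ\gdt_{21}\circ b$, which is again a $\gdt_{20}$-section of $\CH_2\tto\CH_0$ taking values in the units of $\CH_2\tto\CH_1$, so that $\phi_{b_1}$ is an honest Lie groupoid morphism by your third claim applied to $b_1$. Then write $\mathfrak{p}=\mathfrak{p}\circ_{P_2}\gdu(\gds(\mathfrak{p}))$ and $b(\mu(\mathfrak{p}))=b_1(\mu(\mathfrak{p}))\circ_{m_{21}}b(\mu(\mathfrak{p}))$, and apply condition 2 of Definition \ref{def_action_Lie_2_groupoid} (the interchange between the action and the two compositions) to obtain
\[
\phi_b(\mathfrak{p})=\bigl(\mathfrak{p}\cdot b_1(\mu(\mathfrak{p}))\bigr)\circ_{P_2}\bigl(\gdu(\gds(\mathfrak{p}))\cdot b(\mu(\gds(\mathfrak{p})))\bigr)=\phi_{b_1}(\mathfrak{p})\circ_{P_2}\sigma\bigl(\gds(\mathfrak{p})\bigr),
\]
where $\sigma(p):=\gdu(p)\cdot b(\mu(p))$ is the required section (condition 1 of Definition \ref{def_action_Lie_2_groupoid} is what lets one replace $\mu(\mathfrak{p})$ by $\mu(\gds(\mathfrak{p}))$). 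Without identifying $b_1$ and $\sigma$, the claim does not reduce to ``bookkeeping with the interchange law'': one does not even know which identity to verify.
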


Here with bisection we mean a $\gdt_{20}$-section $b$ such that $\gds_{20}\circ b$ is a diffeomorphism. Moreover,
we call a smooth map $\phi \colon P_2\fto Q_2$ between Lie groupoids $P_2\soutar P_1$ and $Q_2\soutar Q_1$ an {\bf affine morphism} if there is a Lie groupoid morphism $\varphi_0\colon P_2\fto Q_2$ and a $\gdt$-section $\sigma\colon Q_1\fto Q_2$ such that, for any $\mathfrak{p}\in P_2$, the following equation holds:
	$$\phi(\mathfrak{p})=\varphi_0(\mathfrak{p})\circ_{Q_2} \sigma(\gdt(\varphi_0(\mathfrak{p}))).$$ 
Affine-morphisms between Lie groupoids should not be confused with the algebraic notion of affine morphism between schemes; the adjective ``affine'' here makes reference to affine transformations on a vector space. Indeed, if two vector spaces $V$ and $W$ are seen as Lie groupoids over the point $\{*\}$, an affine morphism is an affine transformation of the kind
$$v\mapsto A(v)+w_0,$$
where $A\colon V\fto W$ is a linear map and hence a Lie groupoid morphism, and $*\mapsto w_0\in W$ is a $\gdt$-section.

\begin{proof}
First notice that $\phi_b$ is well defined because $b$ is a $\gdt_{20}$-section. It is smooth since it the composition of smooth maps, and its inverse is the smooth map
\[
\mathfrak{p} \mapsto \mathfrak{p} \cdot \tau (b ( (\gds_{20} \circ b)^{-1} ( \mu (\mathfrak{p})))).
\]

Moreover, $\phi_b$ induces another $\gdt_{20}$-section
\[
b_1\colon \CH_0\fto \CH_2, \quad g\mapsto \gdu(\gdt( b(g))) 
\]
which takes values in the units of $\CH_2\soutar\CH_1$, so that
		\begin{eqnarray*}
	\phi_b(\mathfrak{p})&=& \mathfrak{p}\cdot b(\mu(\mathfrak{p}))\\
	&=& \left(\,\mathfrak{p}\circ_{P_2} \gdu(\gds(\mathfrak{p}))\,\right)\cdot \left(\,b_1(\mu(\mathfrak{p}))\circ_{m_{21}} b(\mu(\mathfrak{p}))\,\right)\\
	&=& \left(\,\mathfrak{p}\cdot b_1(\mu(\mathfrak{p}))\,\right)\circ_{P_2} \left(\,\gdu(\gds(\mathfrak{p}))\cdot b(\mu(\mathfrak{p}))\,\right)\\
	&=& \left(\,\phi_{b_1}(\mathfrak{p})\,\right)\circ_{P_2} \left(\,\gdu(\gds(\mathfrak{p}))\cdot b(\mu(\gds(\mathfrak{p})))\,\right).
	\end{eqnarray*}
Then
\[
\sigma\colon P_1\fto P_2, \quad p\mapsto \phi_b (\gdu(p))=\gdu(p)\cdot b(\mu(p)) 
\]
is an $\gds$-section, which can be interpreted as a ``translation'' between $\phi_{b_1}$ and the map $\phi_b$. We conclude that $\phi_b$ is an affine morphism.

Last, if $b$ takes values in $\CH_1$, viewed as a submanifold of $\CH_2$, then it is a Lie groupoid morphism:
	\begin{eqnarray*}
	\phi_b(\mathfrak{p}\circ_{P_2}\mathfrak{q})&=& (\mathfrak{p}\circ_{P_2}\mathfrak{q})\cdot b(\mu(\mathfrak{p}\circ_{P_2}\mathfrak{q})) \\
	&=& (\mathfrak{p}\circ_{P_2}\mathfrak{q})\cdot \left(\, b(\mu(\mathfrak{p}))\circ_{m_{21}} b(\mu(\mathfrak{q}))\,\right)\\
		&=& \phi_b(\mathfrak{p})\circ_{P_2}\phi_b(\mathfrak{q}).      
	\end{eqnarray*}
\end{proof}

Proposition \ref{prop_action_via_sections_and_diffeos} actually combines the morphisms associated to any $\gdt_{20}$-section discussed in Examples \ref{ex_Lie_2_action_by_Lie_groupoid_case_1} and \ref{ex_Lie_2_action_by_Lie_groupoid_case_2}.

\subsection{2-Representations}

We now introduce two (related) notions of 2-representation of Lie 2-groupoids: one for 2-graded vector bundles $(E_1,E_0) \to X$ and one for anchored 2-vector bundles  $E_1\xrightarrow{\delta} E_0\fto X$.  The first one coincides with \cite[Definition 4.18]{BrahicOrtiz19}, 
while, as far we are aware, the second one is not present anywhere in the literature.

In order to compare them, it will be useful to notice that a section $\sigma\colon X\fto \GL(E_1,E_0)_0 = \Hom (E_1,E_0)$ is canonically equivalent to an anchor map $\delta_\sigma \colon E_1 \to E_0$, via the formula
$$ (\delta_{\sigma})_x \colon (E_1)_x \to (E_0)_x, \quad \quad e \mapsto \sigma(x)(e).$$

\begin{defn}\label{def_representation_2_groupoid}
A {\bf 2-graded representation of a Lie 2-groupoid} $\CH_2 \tto \CH_1 \tto \CH_0$ (Definition \ref{def_Lie_2_groupoid}) {on a 2-graded vector bundle} $(E_1, E_0)$ over $X$ (Definition \ref{def_2_vector_bundle}) is a morphism $\phi$ from the Lie 2-groupoid $\CH_i$ to the general Lie 2-groupoid $\GL(E_1,E_0)_i$ (Definition \ref{ex_general_linear_2_groupoid}):
\[\begin{tikzcd}
	{\CH_2} & {\GL(E_1,E_0)_2} \\
	{\CH_1} & {\GL(E_1,E_0)_1} \\
	\CH_0 & {\GL(E_1,E_0)_0}
	\arrow["{\phi_2}", from=1-1, to=1-2]
	\arrow["{\phi_1}", from=2-1, to=2-2]
	\arrow["{\phi_0}", from=3-1, to=3-2]
	\arrow[from=1-1, to=2-1, shift left=.5ex]
	\arrow[from=1-1, to=2-1, shift right=.5ex]
	\arrow[from=2-1, to=3-1, shift left=.5ex]
	\arrow[from=2-1, to=3-1, shift right=.5ex]
	\arrow[from=1-2, to=2-2, shift left=.5ex]
	\arrow[from=1-2, to=2-2, shift right=.5ex]
	\arrow[from=2-2, to=3-2, shift left=.5ex]
	\arrow[from=2-2, to=3-2, shift right=.5ex]
 \end{tikzcd}\]
 
 A {\bf 2-anchored representation of a Lie 2-groupoid} $\CH_2 \tto \CH_1 \tto X$ {on an anchored 2-vector bundle} $E_1\xrightarrow{\delta} E_0\fto X$ is a morphism $\phi$ from the Lie 2-groupoid $\CH_i$ to the general Lie 2-groupoid $\GL(E_1,E_0)_i$ which takes values in its restriction to the image of the section $\sigma$ corresponding to the anchor $\delta$:
 {
  \[\begin{tikzcd}
	{\CH_2} & {\GL(E_1,E_0)_2|_{\sigma(X)}} & {\GL(E_1,E_0)_2} \\
	{\CH_1} & {\GL(E_1,E_0)_1|_{\sigma(X)}} & {\GL(E_1,E_0)_1} \\
	X & {\sigma(X)} & {\GL(E_1,E_0)_0}
	\arrow["{{\phi_2}}", from=1-1, to=1-2]
 \arrow[from=1-1, to=2-1, shift left=.5ex]
 \arrow[from=1-1, to=2-1, shift right=.5ex]
	\arrow[hook, from=1-2, to=1-3]
 \arrow[from=1-2, to=2-2, shift left=.5ex]
 \arrow[from=1-2, to=2-2, shift right=.5ex]
  \arrow[from=1-3, to=2-3, shift left=.5ex]
 \arrow[from=1-3, to=2-3, shift right=.5ex]
	\arrow["{{\phi_1}}", from=2-1, to=2-2]
 \arrow[from=2-1, to=3-1, shift left=.5ex]
 \arrow[from=2-1, to=3-1, shift right=.5ex]
	\arrow[hook, from=2-2, to=2-3]
 \arrow[from=2-2, to=3-2, shift left=.5ex]
 \arrow[from=2-2, to=3-2, shift right=.5ex]
 \arrow[from=2-3, to=3-3, shift left=.5ex]
 \arrow[from=2-3, to=3-3, shift right=.5ex]
	\arrow["{{\sigma}}", from=3-1, to=3-2]
	\arrow[hook, from=3-2, to=3-3]
\end{tikzcd}\]}
\end{defn}

These two notions can be related by the following example.

\begin{ex}\label{ex_canonical_representation_of_GL}
	Let $(E_1,E_0)$ be a 2-graded vector bundle over a manifold $X$. There are two canonical 2-representations of the Lie 2-groupoid $\GL(E_1,E_0)$:
	\begin{itemize}
		\item a 2-graded representation on $(E_1,E_0)$, given by the identity morphisms;
		\item a 2-anchored representation on its associated anchored 2-vector bundle $E_1\times_X \Hom(E_1,E_0)\fto E_0\times_X \Hom(E_1,E_0)\fto \Hom(E_1,E_0)$ (see Definition \ref{ex_trivial_VB_groupoid_as_2-vector_bundle}), given by the identity morphisms.
	\end{itemize}
Notice that, while the first claim is trivial, the second one relies on the following fact: there is a canonical Lie 2-groupoid isomorphism between $\GL(E_1,E_0)$ and the restriction of $\GL \Big( E_1 \times_X \Hom(E_1,E_0), E_0 \times_X \Hom(E_1,E_0) \Big)$ to $\sigma(\Hom(E_1,E_0))$, where
$$
\sigma: \Hom (E_1,E_0) \to \GL \Big( E_1 \times \Hom(E_1,E_0), E_0 \times \Hom(E_1,E_0) \Big)_0 = \Hom(E_1,E_0) \times_X \Hom (E_1,E_0)
$$
is the section induced by the anchor map $\delta \colon E_1 \times_X \Hom(E_1,E_0) \to E_0 \times_X \Hom(E_1,E_0)$, i.e.\ the diagonal of $\Hom(E_1,E_0)$.

This example is a direct generalisation of the fact that any vector space $V$ carries a canonical representation of the Lie group $\GL(V)$, and any vector bundle $E \to X$ carries a canonical representation of the Lie groupoid $\GL(E) \tto X$.
\end{ex}

The following proposition generalises the standard characterisation of representations of Lie group(oid)s as linear actions. Notice however that it holds only for representations on anchored 2-vector bundles; for 2-graded vector bundles the only approach is the one described above as morphism in $\GL(E_1,E_0)$.

\begin{prop}\label{prop_representations_as_linear_actions}
Let $\CH_2 \tto \CH_1 \tto X$ be a Lie 2-groupoid and $E \tto E_0$ a VB-groupoid over the unit groupoid $X \tto X$. Recall that, by Lemma \ref{lemma_2_vector_bundles_2_VB_groupoids}, there is a 2-anchored vector bundle $(E_1,E_0)$ such that $E = E_1 \times_X E_0$ and the anchor $\delta$ of $(E_1,E_0)$ is the core-anchor of $E$.
	Then there is a 1-1 correspondence between
	\begin{itemize}
	 \item 2-anchored representations $(\phi_2,\phi_1,\phi_0)$ of $\CH_2 \tto \CH_1 \tto X$ on $E_1\xrightarrow{\delta}E_0\fto X$, in the sense of Definition \ref{def_representation_2_groupoid};
	\item 2-actions (in the sense of Definition \ref{def_action_Lie_2_groupoid}) of $\CH_2 \tto \CH_1 \tto X$ on the Lie groupoid $E \tto E_0$ which are linear.
	\end{itemize}
\end{prop}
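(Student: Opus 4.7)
The plan is to construct mutually inverse assignments between the two sets by unpacking arrow-by-arrow. The guiding dictionary: linearity of the 2-action means each arrow acts as a linear isomorphism between fibers, the groupoid-morphism properties of the action correspond to those of the 2-representation, and compatibility with the Lie groupoid structure of $E = E_1 \times_X E_0 \soutar E_0$, which (via Lemma \ref{lemma_2_vector_bundles_2_VB_groupoids}) encodes the anchor $\delta$, is what matches the defining condition $d_y \circ A = B \circ d_x$ of $\GL(E_1,E_0)_1|_{\sigma(X)}$.

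From a 2-anchored representation $(\phi_2,\phi_1,\sigma)$, I would first declare the moment maps $\mu_1, \mu_2$ to be the canonical projections to $X$, and then define the actions fiberwise. For $h \in \CH_1$ with $\gds(h) = x$, $\gdt(h) = y$, write $\phi_1(h) = (A_h, B_h)$ and set $e_0 \cdot h := B_h^{-1}(e_0)$ for $e_0 \in (E_0)_y$; similarly, for $\eta \in \CH_2$ decompose $\phi_2(\eta)$ into its $(A,B,J)$-data and use these to build a right action on $E_1 \times_X E_0$ modelled on the action-groupoid structure of $\GL(l,k)_2 \soutar \GL(l,k)_0$ (Example \ref{ex_trivial_VB_groupoid}). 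Linearity is automatic, the right-action axioms translate to the groupoid-morphism properties of the $\phi_i$, and condition 2 of Definition \ref{def_action_Lie_2_groupoid} follows from the commutativity of $\phi_2$ with the two Lie groupoid structures on $\CH_2$.

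Conversely, given a linear 2-action, each $h \in \CH_1$ induces a linear isomorphism between fibers of $E_0$, whose inverse yields $B_h$. The $\CH_2$-action on $E = E_1 \times_X E_0$ restricts to linear isomorphisms between fibers of $E$; using the core identification $E_1 \subseteq E$, these decompose into a core-component $A_\eta$ and a shear $J_\eta$ measuring the direction transverse to $\CH_1$. Compatibility of the action with the source, target, and multiplication of $E \soutar E_0$ (which encode $\delta$) then forces $d_y \circ A_h = B_h \circ d_x$ and the corresponding relations for $J_\eta$, placing the extracted data precisely in $\GL(E_1,E_0)_i|_{\sigma(X)}$, where $\sigma$ is the section corresponding to $\delta$. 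That the two constructions are inverse follows by direct inspection, since both use only the fiberwise linear structure and the matching of the groupoid compatibilities.

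The main obstacle I anticipate is the two-level bookkeeping for the $\CH_2$-action. The matrix/shear description of $\GL(E_1,E_0)_2$ in Definition \ref{ex_general_linear_2_groupoid} must be matched carefully against the action-groupoid form $E_1 \times_X E_0 \soutar E_0$, so one has to verify that vertical composition in $\CH_2 \soutar \CH_1$ corresponds on the action side to composition of linear isomorphisms, that horizontal composition in $\CH_2 \soutar \CH_0$ corresponds to groupoid multiplication in $E$, and that the interchange law in $\CH_2$ translates into the interchange law of the VB-groupoid $E \soutar E_0$ under the action. Once this combinatorial translation is in place, the rest of the argument is formal.
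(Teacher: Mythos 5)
Your plan follows essentially the same route as the paper's proof. The paper packages the forward direction by composing $\phi_2$ and $\phi_1$ with the canonical projections $\GL(E_1,E_0)_2 \to \GL(E_1\times_X E_0)$ and $\GL(E_1,E_0)_1 \to \GL(E_0)$ and letting the resulting linear isomorphisms act; that is exactly your $(A,B,J)$-block description (and you are in fact more careful than the paper in converting the \emph{left} representation into a \emph{right} action by inserting inverses such as $B_h^{-1}$). The paper dispatches the converse with the single sentence that it is ``analogous but in the backward direction''; your extra detail there is welcome, but it is also where your plan has a genuine gap.

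The gap is the construction of $\phi_1$. For $h \in \CH_1$ with $\gds(h)=x$, $\gdt(h)=y$ you must produce $\phi_1(h)=(A_h,B_h) \in \GL(E_1,E_0)_1|_{\sigma(X)}$, i.e.\ \emph{both} $B_h \colon (E_0)_x \to (E_0)_y$ and $A_h \colon (E_1)_x \to (E_1)_y$ with $\delta_y \circ A_h = B_h \circ \delta_x$. The $\CH_1$-action only gives you $B_h$: in a 2-action (Definition \ref{def_action_Lie_2_groupoid}) the groupoid $\CH_1$ acts on $E_0$ alone, not on $E$, so the map $A_h$ that you invoke in the relation ``$d_y \circ A_h = B_h \circ d_x$'' is never defined, and no compatibility of the $\CH_1$-action with the groupoid $E \tto E_0$ can produce it. The missing idea is to use the units of $\CH_2 \tto \CH_1$: define $A_h$ as the core block of the linear isomorphism by which $\gdu_{21}(h) \in \CH_2$ acts on $E = E_1 \times_X E_0$. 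This makes sense because the core $E_1 \times \{0\} = \ker(\tis)$ is preserved by the $\CH_2$-action: condition 2 of Definition \ref{def_action_Lie_2_groupoid} gives $\tis(\Phi(e,\eta)) = \varphi(\tis(e),\gds_{21}(\eta))$, which vanishes on $e=(v_1,0)$ by linearity of $\varphi$. The target compatibility applied to $\gdu_{21}(h)$ then \emph{forces} $\delta_y \circ A_h = B_h \circ \delta_x$, and the same observation (that any 2-groupoid morphism satisfies $\phi_2 \circ \gdu_{21} = \gdu_{21} \circ \phi_1$, so $\phi_1$ is determined by $\phi_2$) is what makes the two constructions mutually inverse, a point your ``direct inspection'' silently relies on. Finally, a cosmetic slip: the model for the $\CH_2$-action is the block-matrix description of $\GL(l,k)_2$ in Definition \ref{ex_GL_2_groupoid}; Example \ref{ex_trivial_VB_groupoid} is the canonical VB-groupoid $\KR^{(l,k)}$, not the groupoid $\GL(l,k)_2 \tto \GL(l,k)_0$.
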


\begin{proof}
 Given $\phi=(\phi_2,\phi_1,\phi_0)$, let $(\hat{\phi}_2,\hat{\phi}_1)$ be the compositions of $\phi_2$ with the projection to $\GL(E_1\times_X E_0)$, and that of $\phi_1$ with the projection to $\GL(E_0)$ respectively.
	Then $\phi$ induces a Lie 2-groupoid action on $E \tto E_0$, given by a linear action of $\CH_2$ on $E = E_1\times_X E_0$, namely
$$\CH_2 \tensor[_{\gdt_{20}}]{\times}{_{X}} (E_1\times_X E_0) \fto (E_1\times_X E_0), \quad \quad (g,v)\mapsto \hat{\phi}_2(g)v,$$
and a linear action of $\cH_1$ on $E_0$, namely
\[
 \CH_1 \tensor[_{\gdt_{10}}]{\times}{_{X}} E_0 \fto E_0, \quad \quad (g,v)\mapsto \hat{\phi}_1(g)v.
\]
The opposite direction is analogous but in the backward direction.
\end{proof}

%
%
%

\begin{ex}[representations of Lie groupoids]
When $\CH_2 = \CH_1 = \cG$ is a Lie groupoid over $X$ (Example \ref{ex_Lie_groupoids_are_Lie_2_groupoids}), then a 2-anchored representation boils down to a pair of ordinary representations of the Lie groupoid $\cG \tto X$ on the vector bundles $E_1 \times_X E_0 \to X$ and $E_0 \to X$.

As a special case, when $X = \{*\}$ and $\CH_2 = \CH_1 = G$ is a Lie group (Example \ref{ex_Lie_groups_are_Lie_2_groupoids}), a 2-anchored representation boils down to a pair of ordinary representations of the Lie group $G$, one on the vector space $E_0$ and one on the cartesian product $E_1 \times E_0$. In particular, if $E_0 = 0$ we have a single $G$-representation on $E_1$, and if $E_1 = 0$ we have two identical $G$-representations on $E_0$.
\end{ex}

\begin{ex}[representations of Lie 2-groups]
When $X = \{*\}$, i.e.\ $\CH_2 \tto \CH_1$ is a Lie 2-group, then a 2-anchored representation boils down to an ordinary representation of a Lie 2-group on a pair of vector spaces; see e.g.\ \cite[Theorem 3.5]{ShengZhu12}, which fits in the general categorical setting described in 
\cite[section 2.2.2]{Baez12}.
\end{ex}







\subsection{Principal 2-actions}

\begin{defn}\label{def_principal_Lie_2_groupoid_action}
A {\bf principal 2-action} of a Lie 2-groupoid $\CH_2 \tto \CH_1 \tto \CH_0$ on a Lie groupoid $P_2 \tto P_1$ is a 2-action (Definition \ref{def_action_Lie_2_groupoid}) such that both the $\CH_2$-action on $P_2$ and the $\CH_1$ on $P_1$ are free and proper.
\end{defn}

From the standard theory of principal groupoid bundles, since the $\CH_2$-action on $P_2$ is free and proper, its quotient $M_2:= P_2/\CH_2$ is a smooth manifold and the projection $\pi_2 \colon P_2 \to M_2$ is a surjective submersion; similarly with $M_1:= P_1/\CH_1$ and $\pi_1\colon P_1\fto M_1$. Moreover, one has the following consequence.

\begin{prop}\label{prop.cartesian.eq.rel}
Any 2-action of $\CH_2 \tto \CH_1 \tto \CH_0$ on $P_2 \tto P_1$ is principal if and only if the induced Lie groupoid morphism $(\Phi,\phi)$, as in the diagram below, is an isomorphism
\[\begin{tikzcd}
	{P_2 \tensor[_{\mu_2}]{\times}{_{t}} \CH_2} & P_2 \times_{\pi_2} P_2 \\
	{P_1 \tensor[_{\mu_1}]{\times}{_{t}} \CH_1} & P_1 \times_{\pi_1} P_1
	\arrow[from=1-1, to=2-1, shift left=.5ex, "\tis"]
	\arrow[from=1-1, to=2-1, shift right=.5ex, "\tit"']
	\arrow[from=1-2, to=2-2, shift left=.5ex]
	\arrow[from=1-2, to=2-2, shift right=.5ex]
	\arrow[from=1-1, to=1-2, "\Phi"]
	\arrow[from=2-1, to=2-2, "\phi"]
\end{tikzcd}\]
\end{prop}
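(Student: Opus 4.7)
The plan is to reduce the proposition to the classical characterization of principal Lie groupoid actions, applied independently at the two levels of the $2$-action, and then to verify that the resulting pair of diffeomorphisms is automatically a Lie groupoid morphism.

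First, I would recall the following standard fact (see, e.g., \cite[Section 5.7]{MoerdijkMrcun03}): a right action of a Lie groupoid $\CH \tto \CH_0$ on a manifold $P$ along a moment map $\mu \colon P \to \CH_0$ is free and proper if and only if the orbit space $P/\CH$ admits a smooth structure for which the projection $\pi \colon P \to P/\CH$ is a surjective submersion and the shearing map
\[
P \tensor[_{\mu}]{\times}{_{t}} \CH \fto P \times_{\pi} P, \quad (p,g) \mapsto (p, p\cdot g)
\]
is a diffeomorphism.

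For the forward implication, the strategy is to apply this fact separately to the free and proper actions $\CH_2 \curvearrowright P_2$ and $\CH_1 \curvearrowright P_1$: each of them yields a diffeomorphism, and together they constitute the underlying maps of the pair $(\Phi,\phi)$ in the diagram. It then remains to observe that $(\Phi,\phi)$ is a Lie groupoid morphism into the submersion groupoid $P_2 \times_{\pi_2} P_2 \tto P_1 \times_{\pi_1} P_1$. This follows directly from condition 2 of Definition \ref{def_action_Lie_2_groupoid}, since the action maps themselves form a Lie groupoid morphism, and the projection to the first factor of the fibre product is trivially one as well; combining them produces precisely $(\Phi,\phi)$. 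For the converse, if $(\Phi,\phi)$ is a Lie groupoid isomorphism, then it restricts to a diffeomorphism both at the level of arrows and at the level of objects; applying the classical fact in the reverse direction forces each of $\CH_2 \curvearrowright P_2$ and $\CH_1 \curvearrowright P_1$ to be free and proper, so the $2$-action is principal by Definition \ref{def_principal_Lie_2_groupoid_action}.

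The only mildly delicate point I foresee is checking that the image of $(\Phi,\phi)$ genuinely lands in $P_2 \times_{\pi_2} P_2$ (resp.\ $P_1 \times_{\pi_1} P_1$) and that this map respects the multiplication, i.e.\ $\Phi\bigl((p,g)\circ (p',g')\bigr) = \bigl(pp',\, (p\cdot g)(p'\cdot g')\bigr)$; but the inclusion in the submersion groupoid is immediate from $\pi_2(p) = \pi_2(p\cdot g)$, and the compatibility with multiplication encodes exactly the interchange between the multiplications of $P_2$, of $\CH_2$, and the action, which is precisely what is built into the definition of a $2$-action.
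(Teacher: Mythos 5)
Your proposal is correct and follows essentially the same route as the paper's own (much terser) proof: the paper likewise observes that condition 2 of Definition \ref{def_action_Lie_2_groupoid} makes $(\Phi,\phi)$ a Lie groupoid morphism, and that principality of each of the two underlying actions is equivalent to the corresponding shearing map ($\Phi$, resp.\ $\phi$) being a diffeomorphism. Your additional verifications (image landing in the fibred product, compatibility with multiplication) are just an expanded version of what the paper compresses into that one sentence.
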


\begin{proof}
 The second condition of Definition \ref{def_action_Lie_2_groupoid} implies that $(\Phi,\phi)$ is a Lie groupoid morphism, while the principality of each action is equivalent to $\Phi$ and $\phi$ being diffeomorphisms.
\end{proof}

\begin{prop}\label{prop:principal_2_action_induces_groupoid_on_quotient}
Any principal 2-groupoid action of $\CH_2 \tto \CH_1 \tto \CH_0$ on $P_2 \tto P_1$ induces a unique Lie groupoid structure on $P_2/\CH_2 \tto P_1 / \CH_1$ such that the quotient map from $P_2 \tto P_1$ to $P_2/\CH_2 \tto P_1 / \CH_1$ is a fibration of Lie groupoids.
\end{prop}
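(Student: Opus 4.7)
The plan is to descend the groupoid structure of $P_2\soutar P_1$ through the quotient submersions $\pi_i\colon P_i\fto P_i/\CH_i$ (which exist as principal bundle projections by freeness and properness), and then to verify smoothness, the groupoid axioms, uniqueness and the fibration property.

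The key observation is that condition 2 of Definition \ref{def_action_Lie_2_groupoid} — that $(\Phi,\varphi)$ is a Lie groupoid morphism — is exactly the statement that all the structure maps of $P_2\soutar P_1$ are equivariant with respect to the pair of actions, namely $\bs(p\cdot g)=\bs(p)\cdot\gds_{21}(g)$, $\bt(p\cdot g)=\bt(p)\cdot\gdt_{21}(g)$, $\bu(p_1\cdot g_1)=\bu(p_1)\cdot\gdu_{21}(g_1)$, $\bi(p\cdot g)=\bi(p)\cdot g^{\gdi_{20}}$, together with the interchange-law identity $(p\cdot g)\circ(q\cdot g')=(p\circ q)\cdot(g\circ_{21}g')$ for compatible composable pairs. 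These equivariance properties let me define $\bar{\bs},\bar{\bt},\bar{\bu},\bar{\bi}$ by passing to the quotients, and to define multiplication on classes via $[p]\circ[q]:=[p\circ q]$ after lifting to representatives with $\bs(p)=\bt(q)$; such a lift exists because $\gds_{21}$ is a surjective submersion, and the interchange-law identity ensures independence from the choice of representatives. Smoothness of the descended source, target, unit and inverse is then immediate from the universal property of the $\pi_i$, while smoothness of the multiplication follows by applying the standard theory of quotients by free and proper groupoid actions to the fibered product $P_2\tensor[_{\bs}]{\times}{_{\bt}}P_2$, on which the action groupoid $\CH_2\tensor[_{\gds_{21}}]{\times}{_{\gdt_{21}}}\CH_2$ acts freely and properly. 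The groupoid axioms on the quotient hold by inspection on representatives, and uniqueness of the Lie groupoid structure is forced by the surjectivity of $\pi_2$ and $\pi_1$, since any structure making $\pi$ a groupoid morphism must agree with the descended one.

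The final step is to verify that $\pi$ is a fibration of Lie groupoids, i.e.\ that $\pi_1$ is a surjective submersion (clear, being a principal bundle projection) and that the induced map $P_2\fto P_1\tensor[_{\pi_1}]{\times}{_{\bar{\bt}}}(P_2/\CH_2)$, $p\mapsto(\bt(p),[p])$, is also a surjective submersion. Surjectivity is obtained by using $\gdt_{21}$ as a surjective submersion to lift $\CH_1$-translations of the target in $P_1$ to $\CH_2$-translations on $P_2$, while the submersion property reduces, via Proposition \ref{prop.cartesian.eq.rel} (which identifies $P_2\tensor[_{\mu_2}]{\times}{_{\gdt_{20}}}\CH_2$ with $P_2\tensor[_{\pi_2}]{\times}{_{\pi_2}}P_2$), to the principality of the $\CH_2$-action on $P_2$. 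The main technical obstacle will be precisely this interplay between the interchange law (needed for well-definedness of multiplication) and the smooth descent arguments for both multiplication and the fibration anchor map, which is where the careful bookkeeping of the 2-action will be concentrated.
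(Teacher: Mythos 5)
Your proposal is correct in outline, but it takes a genuinely different route from the paper. The paper does not build the quotient structure by hand: it invokes the quotient theorem for Lie groupoids \cite[Theorem 2.4.6]{Mackenzie05}, which says that if the graphs $\CR \subset P_2 \times P_2$ and $R \subset P_1 \times P_1$ of a pair of equivalence relations form a Lie subgroupoid of the product groupoid $P_2 \times P_2 \tto P_1 \times P_1$, and a certain map $\CR \to P_2 \tensor[_{\bs}]{\times}{_{\pr_1}} R$ built from the sources is a surjective submersion, then the quotient carries a unique Lie groupoid structure making the projection a fibration. Both hypotheses are then checked in one line each: the graph of the orbit relation is identified with the action groupoid $P_2 \tensor[_{\mu_2}]{\times}{_{\gdt_{20}}} \CH_2 \tto P_1 \tensor[_{\mu_1}]{\times}{_{\gdt_{10}}} \CH_1$ via Proposition \ref{prop.cartesian.eq.rel} (principality), and the second condition becomes essentially $\id_{P_2} \times \gds_{21}$, a surjective submersion. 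What the citation buys is precisely the part of your argument that remains a sketch: the smooth descent of the multiplication (i.e.\ identifying $(P_2/\CH_2) \tensor[_{\bar{\bs}}]{\times}{_{\bar{\bt}}} (P_2/\CH_2)$ with a quotient of $P_2 \tensor[_{\bs}]{\times}{_{\bt}} P_2$, which requires showing that $\pi_2 \times \pi_2$ restricted to composable pairs is a surjective submersion) and the submersivity of the anchor $p \mapsto (\bt(p),[p])$. Your direct approach does work and is more self-contained and explicit; your lifting arguments through the surjective submersions $\gds_{21}$, $\gdt_{21}$ are sound and are exactly the mechanism that makes Mackenzie's hypotheses hold. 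But those two smoothness claims are the technical crux and would have to be written out in full, e.g.\ by the infinitesimal version of your lifting: lift a tangent vector of $P_2/\CH_2$ through $\pi_2$, correct the mismatch along $\bt$ by an infinitesimal $\CH_1$-translation (here principality of the $\CH_1$-action identifies $\ker d\pi_1$ with the infinitesimal orbit directions), and lift that translation through $\gdt_{21}$ starting at a unit of $\CH_2 \tto \CH_0$.

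Two corrections to your bookkeeping. First, the equivariance of the inverse map reads $\bi(p \cdot g) = \bi(p) \cdot g^{\gdi_{21}}$, with the inverse taken in $\CH_2 \tto \CH_1$ --- the groupoid structure actually used on $P_2 \tensor[_{\mu_2}]{\times}{_{\gdt_{20}}} \CH_2$ in Definition \ref{def_action_Lie_2_groupoid} --- not $g^{\gdi_{20}}$. Second, independence of the multiplication from the choice of representatives is not a consequence of the interchange identity alone: given $p' = p \cdot g$ and $q' = q \cdot g'$ with $\bs(p) = \bt(q)$ and $\bs(p') = \bt(q')$, you first need \emph{freeness} of the $\CH_1$-action on $P_1$ to conclude from $\bs(p)\cdot\gds_{21}(g) = \bt(q)\cdot\gdt_{21}(g')$ that $\gds_{21}(g) = \gdt_{21}(g')$, i.e.\ that $(g,g')$ is composable in $\CH_2 \tto \CH_1$; only then does the identity $(p \cdot g)\circ(q \cdot g') = (p \circ q)\cdot(g \circ_{21} g')$ apply. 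Freeness is of course available (the 2-action is principal), but it must be invoked at this step.
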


\begin{proof}
We will use the following result from \cite[Theorem 2.4.6]{Mackenzie05}. Let $\CR\subset P_2\times P_2$ be the graph of an equivalence relation $\sim_2$ in $P_2$ and $R\subset P_1\times P_1$ the graph of an equivalence relation $\sim_1$ in $P_1$. If
\begin{enumerate}
	\item $\CR\soutar R$ is a Lie subgroupoid of the cartesian product $P_2\times P_2\soutar P_1\times P_1$;
	\item the map $\CR\fto P_2 \tensor[_{\bs}]{\times}{_{\pr_1}} R, (p,q)\mapsto (p,\bs(p),\bs(q))$ is a surjective submersion;
\end{enumerate}
then there is a unique Lie groupoid structure in $(P_2/\sim_2)\soutar (P_1/\sim_1)$, with the quotient map being a fibration. If $\sim_2$ and $\sim_1$ are given by the quotients by the $\cH_2$ and $\cH_1$-actions, then the condition (1) is satisfied by Proposition \ref{prop.cartesian.eq.rel}, while condition (2) is equivalent to the map $P_2 \tensor[_{\mu_2}]{\times}{_{\bt}} \fto P_2 \tensor[_{\mu_1 \circ \bs}]{\times}{_{\bt}} \CH_1, (p,g)\mapsto (p,\bs(g))$ being a surjective submersion, which is clearly true.
\end{proof}

\begin{ex}\label{ex_principal_action_Lie_2_groupoid_on_itself}
 Let $\CH_2 \tto \CH_1 \tto \CH_0$ be a Lie 2-groupoid. Its 2-action on itself from Example \ref{ex_action_Lie_2_groupoid_on_itself} is principal, since the (1-)actions of $\CH_2 \tto \CH_0$ and of $\CH_1 \tto \CH_0$ on themselves are principal. The quotient of both groupoid actions is isomorphic to $\CH_0$, so that the quotient of the principal 2-groupoid action is the unit groupoid $\CH_0 \tto \CH_0$.
\end{ex}

\subsection{PB-groupoids}\label{sec_PB_groupoids}

\begin{defn}\label{def_PB_groupoid}
A {\bf PB-groupoid} with structural 2-groupoid $\CH_2 \tto \CH_1 \tto \CH_0$ is a commutative diagram
\[\begin{tikzcd}
{P_\cG} & \cG \\
{P_M} & M
\arrow[from=1-1, to=2-1, shift left=.5ex, "\bs"]
\arrow[from=1-1, to=2-1, shift right=.5ex, "\bt"']
\arrow[from=1-2, to=2-2, shift left=.5ex, "s"]
\arrow[from=1-2, to=2-2, shift right=.5ex, "t"']
\arrow["{\Pi_\cG}", from=1-1, to=1-2]
\arrow["{\Pi_M}", from=2-1, to=2-2]
\end{tikzcd}\]
such that
\begin{itemize}
 \item $P_\CG\soutar P_M$ and $\CG\soutar M$ are Lie groupoids;
 \item $\Pi_\cG$ and $\Pi_M$ are surjective submersions;
 \end{itemize}
together with a principal 2-action (Definition \ref{def_principal_Lie_2_groupoid_action}) of $\CH_2 \tto \CH_1 \tto \CH_0$ on $P_\CG\soutar P_M$ and a Lie groupoid isomorphism $\varphi$ between $(P_\CG/\CH_2)\soutar (P_M/\CH_1)$ (see Proposition \ref{prop:principal_2_action_induces_groupoid_on_quotient}) and  $\CG\soutar M$ such that $\Pi_\cG =\varphi\circ \pr$, where $\pr \colon P_\CG\fto P_\CG/\CH_2$ denotes the quotient map.
\end{defn}

Notice that $\Pi_\cG$ is a Lie groupoid fibration by Proposition \ref{prop:principal_2_action_induces_groupoid_on_quotient}, hence, in particular, it is a Lie groupoid morphism.

\begin{ex}[PB-groupoids with structural Lie 2-groups]
 If the structural Lie 2-groupoid is a Lie 2-group (Definition \ref{def:lie2grp}), then a PB-groupoid in the sense of Definition \ref{def_PB_groupoid} boils down to a \textit{principal bundle groupoid} in the sense of \cite[Definition 1.15]{GarmendiaPaycha23} and to a \textit{principal 2-bundle over a Lie groupoid} in the sense of  \cite[Definition 3.4]{CCK22} and of \cite[Definition 5.9]{HerreraOrtiz23}; in this last work, the definition is formulated via the equivalent condition from Proposition \ref{prop.cartesian.eq.rel}. A version of these objects involving differentiable stacks was considered in \cite[Definition 3.22]{BursztynNosedaZhu20}.
 
 \textit{Principal 2-bundles} as in \cite[Definition 6.1.5]{NW} consist of a Lie 2-group $\CH_2\soutar \CH_1$ acting on a Lie groupoid $P_\CG\soutar P_M$, together with a map $P_\CG\fto Y$ such that the Lie groupoids $\CH_2 \times P_\CG \tto P_\cG$ and $P_\CG\times_Y P_\CG \tto P_\cG$ are Morita equivalent. As proved in \cite[Proposition 1.32]{GarmendiaPaycha23}, these objects are the same things as PB-groupoids with structural Lie 2-group $\CH_2\soutar \CH_1$ over fibred pair groupoid $\CG=M\times_Y M\soutar M$, for a submersion $M\fto Y$.

Last, note that \textit{semi-strict principal 2-bundles} from \cite[Definition 2.9]{Wockel11} are defined in terms of local trivialisations and cannot be directly comparable with (particular cases of) our Definition \ref{def:lie2grp} since they involve actions of Lie 2-groups on \textit{smooth 2-spaces} (objects more general than a Lie groupoid). 
%
\end{ex}

\begin{ex}[PB-groupoids with structural Lie groupoids]\label{ex.groupoid.act}

%
%

 If the structural Lie 2-groupoid is a Lie groupoid, with $\CH_2 = \CH_1$ (Example \ref{ex_Lie_groupoids_are_Lie_2_groupoids}), then a PB-groupoid in the sense of Definition \ref{def_PB_groupoid} yields a pair of standard $(\CH_2 \tto \CH_0)$-principal bundles $(P_\cG \to \cG, P_M \to M)$ (in the sense of e.g.\ \cite[Section 5.7]{MoerdijkMrcun03}, \cite[Definition 2.1]{R}, \cite[Definition 3.1]{I} or \cite[Section 2.2]{H}). In particular, when $\CH_2\tto\CH_0$ is a bundle of Lie groups and $\CG=M$ we recover  \textit{generalized principal bundles} as in \cite[Theorem 2.1]{CastrillonRodriguez23}.
\end{ex}

\begin{ex}[PB-groupoids with structural Lie groups]

%
%
%
 If the structural Lie 2-groupoid is of the form $H\soutar H\soutar \{*\}$, then a PB-groupoid boils down to a \textit{principal bundle over a groupoid} in the sense of \cite[Definition 2.33]{LTX}, to a \textit{PBG-groupoid} in the sense of \cite[Definition 2.1]{Mackenzie87paper} and to a  \textit{$G$-groupoid} in the sense of \cite[Definition 4.4]{BruceGrabowskaGrabowski17}. This is a particular case, for $\CH_0=\{*\}$, of Example \ref{ex.groupoid.act}.

 Last, note that {\it $G$-principal groupoids} from \cite[Definition 2.2]{FernandesStruchiner23} cannot be directly comparable with (particular cases of) our Definition \ref{def_PB_groupoid}, since they involve principal actions of Lie groups on the total space of Lie groupoids, which induce non-principal actions on the base space.
\end{ex}

\begin{ex}[Lie 2-groupoids as PB-groupoids]\label{ex_lie_2_groupoids_as_PB_groupoids}
Let $\CH_2 \tto \CH_1 \tto \CH_0$ be a Lie 2-groupoid. Then $\CH_2 \tto \CH_1$ defines a PB-groupoid over the unit groupoid $\CH_0 \tto \CH_0$, with $\Pi_\cG$ and $\Pi_M$ given, respectively, by $t_{20}$ and $t_{10}$, and with the principal action on itself (see Example \ref{ex_principal_action_Lie_2_groupoid_on_itself}).
\end{ex}

\subsection{First half of the correspondence}

Let $\cH \tto X$ be a Lie groupoid. Recall that, given a principal $\cH$-bundle $P \to M$, with moment map $\mu \colon P \to X$, and a $\cH$-representation $E \to X$, then its {\bf associated bundle} is the quotient
\[
P[E] := (P \times_X E)/\cH,
\]
which is a vector bundle over $M$ with rank equal to the rank of $E$. This construction generalises the standard associated bundles for principal groups bundles, and can in turn be generalised to PB-groupoids.

\begin{thm}\label{thm_from_PB_to_VB}
Let $P_{\cG} \tto P_M$ be a PB-groupoid with structural Lie 2-groupoid $\cH_2 \tto \cH_1 \tto  X$ (Definition \ref{def_PB_groupoid}), and $E_1 \to E_0 \to X$ a 2-anchored representation of $\cH$ (Definition \ref{def_representation_2_groupoid}), which we view as a linear action of $\cH$ on $E_1\times_X E_0 \tto E_0$, as in Proposition \ref{prop_representations_as_linear_actions}. Then the quotients
\[
P[E]_2 := (P_{\cG} \times_{X} (E_1\times_X E_0))/\cH_2, \quad \quad P[E]_1 := (P_M \times_{X} E_0)/\cH_1
\]
define a VB-groupoid
\[\begin{tikzcd}
	P[E]_2 & \cG \\
	P[E]_1 & M
	\arrow[from=1-1, to=2-1, shift left=.5ex, "\tis"]
	\arrow[from=1-1, to=2-1, shift right=.5ex, "\tit"']
	\arrow[from=1-2, to=2-2, shift left=.5ex, "s"]
	\arrow[from=1-2, to=2-2, shift right=.5ex, "t"']
	\arrow["{\pi_\cG}", from=1-1, to=1-2]
	\arrow["{\pi_M}", from=2-1, to=2-2]
\end{tikzcd}\]
with rank $(\rank(E_1),\rank(E_0))$.
\end{thm}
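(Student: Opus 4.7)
The plan is to express $P[E]_2 \tto P[E]_1$ as the quotient of an auxiliary Lie groupoid by a principal 2-action, exploiting the correspondence between 2-anchored representations and linear 2-actions on VB-groupoids over unit groupoids established in the proposition preceding this theorem. By Lemma \ref{lemma_2_vector_bundles_2_VB_groupoids}, the anchored 2-vector bundle $E_1 \xrightarrow{\delta} E_0 \to X$ yields a VB-groupoid $E_1 \times_X E_0 \tto E_0$ over the unit groupoid $X \tto X$, and the 2-anchored representation translates into a linear 2-action of $\cH_2 \tto \cH_1 \tto X$ on it. We then form the fiber product Lie groupoid
\[
\widetilde{P}_\cG := P_\cG \tensor[_{\mu_2}]{\times}{_X} (E_1 \times_X E_0) \tto P_M \tensor[_{\mu_1}]{\times}{_X} E_0 =: \widetilde{P}_M,
\]
whose structure combines that of $P_\cG \tto P_M$ and of $E_1 \times_X E_0 \tto E_0$: source and target read $(p,e_1,e_0) \mapsto (\bs(p), e_0)$ and $(p,e_1,e_0) \mapsto (\bt(p), \delta(e_1)+e_0)$, while multiplication reads $(p,e_1,e_0)\cdot(q,f_1,f_0) = (p\cdot q,\, e_1+f_1,\, f_0)$ on composable pairs.

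Combining the principal 2-action on $P_\cG \tto P_M$ with the linear 2-action on $E_1 \times_X E_0 \tto E_0$, one obtains a diagonal 2-action of $\cH_2 \tto \cH_1$ on $\widetilde P_\cG \tto \widetilde P_M$ which is again principal, since freeness and properness are inherited from the $P$-factor. Proposition \ref{prop:principal_2_action_induces_groupoid_on_quotient} then endows the quotient $\widetilde P_\cG/\cH_2 \tto \widetilde P_M/\cH_1$, which by construction equals $P[E]_2 \tto P[E]_1$, with a canonical Lie groupoid structure; it fits over $\cG \tto M$ through the descent of the projections composed with $\Pi_\cG$ and $\Pi_M$. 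Simultaneously, the standard associated bundle construction shows that $P[E]_1 \to M$ is a vector bundle of rank $\rank(E_0)$, associated to the principal $\cH_1$-bundle $P_M\to M$ through the $\cH_1$-representation $E_0$, and that $P[E]_2 \to \cG$ is a vector bundle of rank $\rank(E_1)+\rank(E_0)$, associated to $P_\cG\to\cG$ through the $\cH_2$-representation $E_1 \times_X E_0$.

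It remains to check that the descended groupoid structure maps are vector bundle morphisms. On the pre-quotient $\widetilde P_\cG \tto \widetilde P_M$ they are fiberwise linear with respect to the $(E_1 \times_X E_0)$- and $E_0$-directions, as one reads directly from the explicit formulas above together with the VB-groupoid nature of $E_1 \times_X E_0 \tto E_0$; moreover, the diagonal $\cH$-action is linear in these directions, which is precisely the content of linearity of the 2-action. Hence both the vector bundle structure and the linearity of the structure maps descend coherently to the quotient, and we obtain the claimed VB-groupoid of rank $(\rank(E_1), \rank(E_0))$. The most delicate technical points are verifying principality (especially properness) of the diagonal 2-action and checking compatibility of the descended vector bundle structure with the descended groupoid structure; both are routine but require careful inspection of the fiber products and quotients involved.
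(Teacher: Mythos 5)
Your proposal is correct and takes essentially the same route as the paper's proof: form the product Lie groupoid $P_\cG \times_X (E_1 \times_X E_0) \tto P_M \times_X E_0$, quotient by the diagonal principal 2-action via Proposition \ref{prop:principal_2_action_induces_groupoid_on_quotient}, and get the vector bundle structures from the classical associated-bundle construction for principal groupoid bundles. The only difference is level of detail: you write out the structure maps of the product groupoid and the final linearity/descent check explicitly, both of which the paper's terse proof leaves implicit.
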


In analogy with the classical construction for principal bundles with structural groups/groupoids, $P[E]_2 \tto P[E]_1$ is called the {\bf associated bundle}.

\begin{proof}
 It is known that $P[E]_2 \to \cG$ and $P[E]_1 \to M$ are vector bundles of ranks, respectively, equal to the ranks of $E_1\times_M E_0 \to X$ and $E_0 \to X$. The groupoid structure on $P[E]_2 \tto P[E]_1$ is induced by that of the product Lie groupoid  $P_{\cG} \times_{X} (E_1\times_X E_0) \tto (P_M \times_{X} E_0)$, the free and proper action of $\cH_2$ and of $\cH_1$, and Proposition \ref{prop:principal_2_action_induces_groupoid_on_quotient}. 
\end{proof}

\section{From VB-groupoids to PB-groupoids}

\subsection{Frames adapted to a VB-groupoid}


The entire frame bundle of a VB-groupoid is too big to support the structure of a PB-groupoid (or even only of a Lie groupoid). In this section we are going to identify the class of frames which are ``adapted'' to the VB-groupoid structure, and which will give rise to a PB-groupoid.

We write here, for later use, the following formula of the ``right-translation'' by elements $g \in \cG$ in an arbitrary VB-groupoid $E_\cG \tto E_M$:
\begin{equation}\label{eq_right_translation}
 \tiR_g := \tim (\cdot, 0_g): \ker(\tis)|_{s^{-1}(t(g))} \to \ker(\tis)|_{s^{-1}(s(g))}.
\end{equation}
This is simply the vector bundle isomorphism which generalises the differential of the right translation $R_g \colon s^{-1} (t(g)) \to s^{-1} (s(g))$ for the tangent VB-groupoid $T\cG \tto TM$.

The main ingredient we will use to define adapted frames (and to prove that they have a groupoid structure) is the notion of fat groupoid, introduced in \cite[section 8]{GraciaMehta17} and briefly recalled below.
 
\begin{defn}\label{def_fat_groupoid}
Let $E_\cG \tto E_M$ be a VB-groupoid of rank $(l,k)$ over $\cG \tto M$. Its {\bf fat groupoid} $\tilde{\cG}(E_\cG)$ consists of all the pairs $(g,H_g)$, with $g \in \cG$ and $H_g \subseteq (E_\cG)_g$ any (necessarily $k$-dimensional) subspace such that
\[
 H_g \oplus \ker(\tis_g) = (E_\cG)_g, \quad \quad H_h \oplus \ker(\tit_g) = (E_\cG)_g. \qedhere
\]
\end{defn}

The set $\tilde{\cG}(E_\cG)$ has a natural Lie groupoid structure over $M$, with structure maps
\[
 s (g,H_g) = s(g), \quad \quad t (g,H_g) = t(g),
\]
\[
 m \Big( (g,H_g), (h,H_h) \Big) = \Big (gh, \tim (H_g,H_h):= \big\{ \tim (v_g,v_h) | v_g \in H_g, v_h \in H_h, \tis (v_g) = \tit (v_h) \big\} \Big).
\]
\[
 u (x) = \Big( 1_x, H_{1_x}:= \tiu (E_M)_x \Big), \quad \quad i (g,H_g) = \Big( g^{-1}, H_{g^{-1}}:= \tii (H_g) \Big).
\]
Moreover, there is a natural representation of $\tilde{\cG}(E_\cG) \tto M$ on the vector bundle $E_M \to M$, given by
\begin{equation}\label{eq_representation_fat_groupoid_on_base}
 (g,H_g) \cdot e_x := \tit_g (\tis_g|_{H_g})^{-1} (e_x),
\end{equation}
and a similar one on $C \to M$, given by
\begin{equation}\label{eq_representation_fat_groupoid_on_core}
 (g,H_g) \cdot c_x := \alpha_g \circ_{E_\cG} c_x \circ_{E_\cG} 0_{g^{-1}} = \tim (\alpha_g, \tiR_{g^{-1}} (c_x) ),
\end{equation}
for $\alpha \in H_g$ such that $\tis_g (\alpha) = \tit_{1_x} (c_x)$.

\begin{ex}[trivial base and trivial core]\label{ex_fat_groupoid_trivial_base_core}
 If $l = 0$, then the Lie groupoid $\tilde{\cG}(E_\cG) \tto M$ is isomorphic to $\cG \tto M$: indeed, since $E_\cG$ has rank $l+k$ and the spaces $H_g$ are $k$-dimensional, we are forced to impose $H_g = (E_\cG)_g$. Note also that the representation \eqref{eq_representation_fat_groupoid_on_base} coincides precisely with the representation of $\cG$ on $E_M$ from Example \ref{ex_VB_groupoid_trivial_core}.

 Similarly, if $k = 0$, then $\tilde{\cG}(E_\cG) \cong \cG$ as well, since each $H_g$ is forced to be the zero vector space.
 Also in this case, the representation \eqref{eq_representation_fat_groupoid_on_core} boils down to the representation of $\cG$ on $C$ from Example \ref{ex_VB_groupoid_trivial_base}.
\end{ex}

We can finally define the frames adapted to a VB-groupoid.

\begin{defn}\label{def_special_frames}
 Let $E_\cG \tto E_M$ be a VB-groupoid of rank $(l,k)$ over $\cG \tto M$. A frame $\phi_g \in \Fr(E_{\cG})$ is called an {\bf \frs\ frame} if the following two conditions holds:
 \begin{enumerate}
 \item $\phi_g|_{\RR^l \times \{0\}}$ takes values in $\ker(\tis_g)$;
 \item the image $H_g \subseteq (E_\cG)_g$ of $\phi_g|_{\{0\} \times \RR^k}$ is an element of the fat groupoid $\tilde{\cG}(E_\cG)$ (Definition \ref{def_fat_groupoid})
 \end{enumerate}
 The collection
 \[
 \Fr(E_\cG)^{\rm \sbis} := \{ \phi_g \colon \RR^{l+k} \to (E_\cG)_g \mid \phi_g \text{ \frs\ frame} \} \subseteq \Fr(E_\cG).
\]
of \frs\ frames of $E_\cG$ is called the \textbf{\frs\ frame bundle} of $E_\cG$.
 \end{defn}

{
\begin{rk}\label{lemma_automatic_s_section}
	The term ``\frs\ frames'' in Definition \ref{def_special_frames} is motivated by the following reasons.
\begin{itemize}
 \item The ``bisection'' part refers to the second condition and points to an analogy with the definition of bisections. Indeed, a bisection of a Lie groupoid $\cG$ can be equivalently described as a submanifold $\Sigma \subseteq T\cG$ such that $(g,T_g \Sigma)$ belongs to the fat groupoid $\tilde{\cG} (E_\cG)$ for every $g \in \Sigma$.
\item The symbol $s$ refers to the first condition. Of course, the choice of $\ker(\tis)$ instead of $\ker(\tit)$ is purely a convention; indeed, later on we will also use \frt\ frames (see Definition \ref{def_special_frames_t}).

\end{itemize}

Notice also that condition 1 in Definition \ref{def_special_frames} implies that $H_g$ is automatically transverse to $\ker(\tilde s)$; therefore, condition 2 can be simplified to $H_g$ being transverse to  $\ker(\tilde t)$.
\end{rk}
}

The key feature of \frs\ frames is the fact that they induce two frames of the base vector bundle $E_M \to M$ (Lemma \ref{lemma_equivalent_def_bisection_frames}) and two frames of the core $C \to M$ (Lemmas \ref{lemma_equivalent_def_frame_taking_values_in_kers} and \ref{prop_induced_s-frame_A}).

\begin{lem}\label{lemma_equivalent_def_bisection_frames}
 Let $E_\cG \tto E_M$ be a VB-groupoid of rank $(l,k)$ over $\cG \tto M$ and $\varphi_g \colon \RR^k \to (E_\cG)_g$ be an injective linear map. The following two properties are equivalent:
 \begin{enumerate}
  \item the image $\Ima(\varphi_g)$ is transverse to $\ker(\tis)$, i.e.\ $\ker(\tis_g)\oplus \Ima( \varphi_g) = (E_\cG)_g$;
  \item the composition
\[
\tis \circ \varphi_g \colon \RR^k \to (E_M)_{s(g)}
\]
is a frame of $E_M \to M$.
 \end{enumerate}
 The same statement holds replacing $\tis$ with $\tit$ everywhere.

In particular, any \frs\ frame $\phi_g: \RR^{l+k} \to E_\cG$ induces the following frames of $E_M \to M$:
 \[ 
 \bs(\phi_g)^b: \RR^k \to (E_M)_{s(g)}, \quad \quad \bs(\phi_g)^b (v) := \tis (\phi_g (0,v) ),
\]
 \[ 
 \bt(\phi_g)^b: \RR^k \to (E_M)_{t(g)}, \quad \quad \bt(\phi_g)^b (v) := \tit (\phi_g (0,v) ).
\]
\end{lem}

\begin{proof}
 \underline{1 $\Rightarrow$ 2}: take $v_1,v_2 \in \RR^k$ such that $\tis (\varphi _g (v_1) ) = \tis (\varphi_g (v_2))$; they the hypothesis yields
 \[
\varphi_g (v_1-v_2) \in \Ima( \varphi_g) \cap \ker(\tis_g) = \{0\}.
 \]
 As $\phi_g$ is injective, this implies that $v_1 = v_2$, i.e.\ $\tis \circ \varphi_g$ is injective. Since $\rank(E_M) = k = \dim(\Ima(\phi_g))$, we conclude that $\tis \circ \phi_g$ is a linear isomorphism, i.e.\ a frame of $E_M \to M$.

 \underline{2 $\Rightarrow$ 1}: consider any element $\varphi_g (v)$ in the intersection $\ker(\tis_g) \cap \Ima( \varphi_g)$. Since $\tis \circ \phi_g$ is injective, the condition $\tis (\varphi_g (v))= 0$ implies $v =0$, i.e.\ the intersection $\ker(\tis_g) \cap \Ima( \varphi_g)$ contains only the zero vector. It follows that
\[
 \dim ( \ker(\tis_g) \oplus \Ima( \varphi_g ) ) = \dim(\ker(\tis_g)) + \dim (\Ima( \varphi_g )) = l + k = \rank (E_\cG),
\]
 hence $\ker(\tis_g) \oplus \Ima( \varphi_g ) = (E_\cG)_g$.
\end{proof}

\begin{lem}\label{lemma_equivalent_def_frame_taking_values_in_kers}
 Let $E_\cG \tto E_M$ be a VB-groupoid of rank $(l,k)$ over $\cG \tto M$ and $\varphi_g \colon \RR^l \to (E_\cG)_g$ be an injective linear map. The following two properties are equivalent:
\begin{enumerate}
 \item $\varphi_g$ takes values in $\ker(\tis_g)$;
 \item the composition
\[
 \tiR_{g^{-1}} \circ \varphi_g \colon \RR^l \to C_{t(g)} \subseteq (E_\cG)_{1_{t(g)}},
 \]
with $\tiR$ given by \eqref{eq_right_translation}, is well-defined and is a frame of $C \to M$.
\end{enumerate}
%
The same holds replacing $\tis$ with $\tit$ everywhere.

In particular, any \frs\ frame $\phi_g: \RR^{l+k} \to E_\cG$ induces the following frame of $C \to M$:
 \[ 
 \bt(\phi_g)^c: \RR^l \to C_{t(g)}, \quad \quad \bt(\phi_g)^b (v) := \tiR_{g^{-1}} (\phi_g (w,0) ).
 \]
\end{lem}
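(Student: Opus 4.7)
The plan is to handle the two implications separately, observing that the key structural fact is that $\tiR_{g^{-1}}$ restricts to a linear isomorphism between $\ker(\tis_g) \subseteq (E_\cG)_g$ and $C_{t(g)} \subseteq (E_\cG)_{1_{t(g)}}$, both of which have dimension $l$.

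For the direction (1) $\Rightarrow$ (2), I would first observe that well-definedness of $\tim(\varphi_g(w), 0_{g^{-1}})$ requires precisely the compatibility condition $\tis(\varphi_g(w)) = \tit(0_{g^{-1}}) = 0_{s(g)}$, which is implied by $\varphi_g$ taking values in $\ker(\tis_g)$. For the frame property, I would use that the ``right-translation'' $\tiR_{g^{-1}}$ from \eqref{eq_right_translation} has $\tiR_g$ as two-sided inverse (which follows from associativity of $\tim$ together with $\tim(0_{g^{-1}}, 0_g) = 0_{1_{s(g)}} = \tiu(0_{s(g)})$), so $\tiR_{g^{-1}}$ is a fibrewise linear isomorphism. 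Since $\varphi_g$ is injective, $\varphi_g(\RR^l) \subseteq \ker(\tis_g)$ is an $l$-dimensional subspace; but $\ker(\tis_g)$ itself has dimension $\rank(E_\cG) - \rank(E_M) = l$, so $\varphi_g$ is actually a linear isomorphism onto $\ker(\tis_g)$. Composing with the linear isomorphism $\tiR_{g^{-1}}|_{\ker(\tis_g)} \colon \ker(\tis_g) \to C_{t(g)}$ yields a linear isomorphism $\RR^l \to C_{t(g)}$, i.e.\ a frame of $C \to M$ at $t(g)$.

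For the direction (2) $\Rightarrow$ (1), the well-definedness half of the hypothesis already gives what is needed: for each $w \in \RR^l$, the fact that $\tim(\varphi_g(w), 0_{g^{-1}})$ makes sense forces $\tis(\varphi_g(w)) = \tit(0_{g^{-1}}) = 0_{s(g)}$, so $\varphi_g(w) \in \ker(\tis_g)$. The frame half of the hypothesis is automatic in this case but is not needed to deduce (1).

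The analogous statement with $\tit$ in place of $\tis$ is proved identically, this time using the ``left-translation'' $\tim(0_g, \cdot)$ and its inverse $\tim(0_{g^{-1}}, \cdot)$, and the canonical identification between the left-core and the right-core $C$. No step here is a genuine obstacle; the only thing to be careful with is bookkeeping the source/target conditions for $\tim$ to be defined, which is exactly the content of condition (1).
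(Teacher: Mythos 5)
Your proposal is correct and follows essentially the same route as the paper's proof: for (1)~$\Rightarrow$~(2) both arguments note that an injective linear map into the $l$-dimensional space $\ker(\tis_g)$ is an isomorphism onto it and then compose with the linear isomorphism $\tiR_{g^{-1}}|_{\ker(\tis_g)} \colon \ker(\tis_g) \to C_{t(g)}$, and for (2)~$\Rightarrow$~(1) both observe that mere well-definedness of $\tim(\varphi_g(w), 0_{g^{-1}})$ forces $\tis(\varphi_g(w)) = \tit(0_{g^{-1}}) = 0_{s(g)}$. The only difference is that you make explicit some details the paper leaves implicit (that $\tiR_g$ is a two-sided inverse of $\tiR_{g^{-1}}$, the dimension count for $\ker(\tis_g)$, and the left-translation/left-core bookkeeping in the $\tit$ version), which is harmless.
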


\begin{proof}

 \underline{1 $\Rightarrow$ 2}: since $\varphi_g$ is an injective linear map, taking values in the $l$-dimensional space $\ker(\tis_g)$, it is a linear isomorphism. Then its composition with the linear isomorphism $\tiR_{g^{-1}}$ is well-defined and is a frame of $C \to M$.
 



\underline{2 $\Rightarrow$ 1}: since the composition $\tiR_{g^{-1}} \circ \varphi_g$ is well defined, $\varphi_g$ must necessarily takes values in $\ker(\tis_g) \subseteq (E_\cG)_g$.
\end{proof}

\begin{defn}\label{def_t_phi}
 For any \frs\ frame $\phi_g \in \Fr^{\sbis} (E_\cG)$ we denote by
\[
\dphig \colon \RR^{l} \to \RR^k
\]
the unique linear map satisfying the following diagram:
\[\begin{tikzcd}
	{C_{t(g)}} & {(E_M)_{t(g)}} \\
	{\RR^{l}} & {\RR^k}
	\arrow["{\dphig}", dashed, from=2-1, to=2-2]
	\arrow["{\rho_{t(g)}}", from=1-1, to=1-2]
	\arrow["{\bt(\phi_g)^b}"', from=2-2, to=1-2]
	\arrow["{\bt(\phi_g)^c}", from=2-1, to=1-1]
	\arrow["\cong"', from=2-1, to=1-1]
	\arrow["\cong", from=2-2, to=1-2]
\end{tikzcd}\]

Equivalently, since $\bt(\phi_g)^b$ is an isomorphism by Lemma \ref{lemma_equivalent_def_bisection_frames}, $\dphig$ is simply the composition of linear maps
\[
 \dphig := (\bt(\phi_g)^b)^{-1} \circ \rho_{t(g)} \circ \bt(\phi_g)^c. \qedhere
\]
\end{defn}
In particular, $\dphig$ vanishes if and only if $\phi_g (0,w) \in \ker (\tit_g)$ for every $w \in \RR^l$: the map $\dphig$ measures then how non-zero the element $\tit (\phi_g (w,0))$ can be.

\begin{lem}\label{prop_induced_s-frame_A}
 Any \frs\ frame $\phi_g \colon \RR^{l+k} \to (E_\cG)_g$ induces the following frame of $C \to M$:
\[
 \bs(\phi_g)^c: \RR^{l} \to C_{s(g)}, \quad \quad \bs(\phi_g)^c (w) := \tiR_g (\tii (\phi_g ( -w, \dphig(w))) ),
\] 
with $\tiR_g$ given by equation \eqref{eq_right_translation} and $\dphig$ from Definition \ref{def_t_phi}.
\end{lem}

\begin{proof}
The vector $\bs(\phi_g)^c (w) \in (E_\cG)_g$ takes values in $\ker(\tis_{1_{s(g)}}) = C_{s(g)}$. Indeed, Lemma \ref{lem_t_phi} yields $\phi_g (-w, \dphig(w)) \in \ker(\tit_g)$, and after applying $\tii$ and $\tiR_g$ one obtains an element in $\ker(\tis_{1_{s(g)}})$.

Moreover, we claim that $\bs(\phi_g)^c$ is a linear isomorphism. Since $C$ has rank $l$, it is enough to check its injectivity. If $\bs(\phi_g)^c (w) = 0$, then condition 1 of Definition \ref{def_special_frames} implies that
\[
 0 = \tit (\bs(\phi_g)^c (w) ) = \tit (\tii (\phi_g (-w,\dphi (w)) ) = \tis (\phi_g (-w,\dphi (w)) ) = \tis (\phi_g (0,\dphi (w)) ).
\]
As a consequence, the element $\phi_g (0, \dphi (w)) \in \Ima(\phi_g|_{\{0\} \times \RR^k} )$ belongs also to $\ker(\tis)$ and is therefore zero by Lemma \ref{lemma_equivalent_def_bisection_frames}, as the image of $\phi_g|_{\{0\} \times \RR^k}$ is an element of the fat groupoid. Since $\phi_g$ is injective, one has $\dphi (w) = 0$. Plugging this in $\bs(\phi_g)^c (w) = 0$ we get
\[
0 = \tiR_g (\tii (\phi_g (w,0) ) )
\]
which implies $\phi_g (w,0) = 0$; by the injectivity of $\phi_g$, we conclude that $w = 0$.
\end{proof}

\subsection{The groupoid structure of the \frs\ frames}

The first part of the main result of this section is the Lie groupoid structure on the space of adapted frames:
\[
\Fr(E_\cG)^{\rm \sbis} \tto \Fr(C) \times_M \Fr(E_M).
\]
We will focus here on the algebraic part (i.e.\ the groupoid structure), postponing the smoothness to Theorem \ref{thm_special_frames_are_principal_bundles} later on.


\begin{thm}\label{thm_special_frames_are_groupoid}
Given a VB-groupoid $E_\cG \tto E_M$ of rank $(l,k)$, its \frs\ frame bundle $\Fr(E_\cG)^{\rm \sbis}$ is a groupoid over $\Fr(C) \times_M \Fr(E_M)$, where $C \to M$ is the core of $E_\cG \tto E_M$ (Definition \ref{def_core}), with structure maps induced by those of $E_\cG \tto E_M$:
\[
 \bs \colon \Fr(E_\cG)^{\rm \sbis} \to \Fr(C) \times_M \Fr(E_M), \quad \bs (\phi_g) (w,v) = \Big(\bs(\phi_g)^c (w):= \tiR_g ( \tii (\phi_g (-w,\dphig( w ))) )
 ,  \bs(\phi_g)^b (v) := \tis (\phi_g (0,v) ) \Big);
\]
\[
 \bt \colon \Fr(E_\cG)^{\rm \sbis} \to \Fr(C) \times_M \Fr(E_M), \quad \bt (\phi_g) (w,v) = \Big( \bt(\phi_g)^c (w) := \tiR_{g^{-1}} ( \phi_g (w,0) ), \bt(\phi_g)^b (v) := \tit (\phi_g (0,v) ) \Big);
\]
\[
 \bm \colon \Fr(E_\cG)^{\rm \sbis} \tensor[_{\bs}]{\times}{_{\bt}} \Fr(E_\cG)^{\rm \sbis}\to \Fr(E_\cG)^{\rm \sbis}, \quad \bm (\phi_g, \phi_h ) (w,v) := \tim \Big( \phi_g (0,v), \phi_h (0,v) \Big) + \tiR_h ( \phi_g (w,0));
\]
\[
 \bu \colon \Fr(C) \times_M \Fr(E_M) \to \Fr(E_\cG)^{\rm \sbis}, \quad \bu (\varphi_x^c, \varphi^b_x) (w,v) := \tiu (\varphi^b_x ( v ) ) + \varphi^c_x (w);
\]
\[
 \bi \colon \Fr(E_\cG)^{\rm \sbis} \to \Fr(E_\cG)^{\rm \sbis}, \quad \bi (\phi_g) (w,v):= \tii (\phi_g (-w,v + \dphig(w)) ). \qedhere
\]
\end{thm}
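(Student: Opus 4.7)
The plan is to reduce every verification to a linear-algebra calculation inside $E_\cG$, using the VB-groupoid structure and in particular the interchange law. As a preliminary step I would unpack the auxiliary map $d_{\phi_g}\colon\RR^l\fto\RR^k$ implicit in the statement: since $\bt(\phi_g)^b=\tit\circ\phi_g(0,\cdot)$ is a linear isomorphism by Definition \ref{def_special_frames}(2) and Lemma \ref{lemma_equivalent_def_bisection_frames}, one can set $d_{\phi_g}:=(\bt(\phi_g)^b)^{-1}\circ\tit\circ\phi_g(\cdot,0)$, which is characterised by $\phi_g(-w,d_{\phi_g}(w))\in\ker(\tit_g)$. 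With this in place each formula in the statement is unambiguous.

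First I would verify that each structure map lands in the prescribed target. For $\bs$ and $\bt$, the ``bisection'' parts are frames of $E_M$ at $s(g)$ and $t(g)$ respectively, by Lemma \ref{lemma_equivalent_def_bisection_frames}. For the ``core'' parts, the defining property of $d_{\phi_g}$ places $\phi_g(\pm w,\mp d_{\phi_g}(w))$ in $\ker(\tit_g)$, so its $\tii$-image lies in $\ker(\tis)$ over $g^{-1}$, and $\tiR_{g^{\mp 1}}$ then carries it into $\ker(\tis)|_{1_{s(g)}}=C_{s(g)}$, resp.~$C_{t(g)}$, by Lemma \ref{lemma_equivalent_def_frame_taking_values_in_kers}; linearity and injectivity come from the fact that $\phi_g|_{\RR^l\times\{0\}}$ is a linear isomorphism onto $\ker(\tis_g)$ and that $\tii$ and $\tiR$ are linear bijections between the relevant fibres. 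The analogous checks for $\bu$ and $\bi$ are immediate from the constructions.

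The main step is to show that $\bm(\phi_g,\phi_h)$ is again an $s$-bisection frame at $gh$. Evaluated on $\{0\}\times\RR^k$ the formula reduces to $\tim(\phi_g(0,v),\phi_h(0,v))$, which is composable in $E_\cG$ by the fibred-product condition $\bs(\phi_g)^b=\bt(\phi_h)^b$, and whose $\tis$- and $\tit$-images recover $\bs(\phi_h)^b$ and $\bt(\phi_g)^b$ by the groupoid identities $\tis\circ\tim=\tis\circ\pr_2$ and $\tit\circ\tim=\tit\circ\pr_1$; so it is a bisection frame. Evaluated on $\RR^l\times\{0\}$, the formula reduces to $\tiR_h(\phi_g(w,0))=\tim(\phi_g(w,0),0_h)$, which lies in $\ker(\tis_{gh})$ because $\tis(0_h)=0$. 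By Lemma \ref{lemma_automatic_s_section} the full map is then an $s$-bisection frame.

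Finally, the groupoid axioms reduce to the corresponding identities in $E_\cG\soutar E_M$: associativity of $\bm$ and the unit and inverse laws follow from those of $\tim,\tiu,\tii$ combined with the multiplicativity of $\tiR$ under composition ($\tiR_{gh}=\tiR_h\circ\tiR_g$) and with the compatibility of $\tim$ with the core. I expect the main obstacle to be bookkeeping rather than conceptual: one must keep track of how $d_{\phi_g}$ transforms under $\bm$ and $\bi$ in order to verify source/target compatibility and the unit/inverse laws, leading to block-triangular identities of the schematic form $d_{\bm(\phi_g,\phi_h)}=(\text{conjugate of }d_{\phi_g})+d_{\phi_h}$. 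No new geometric input beyond the interchange law and the multiplicativity of $\tiR$ is needed.
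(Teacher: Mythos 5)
Your overall architecture coincides with the paper's: isolate $d_{\phi_g}$ via the characterisation $\phi_g(-w,d_{\phi_g}(w))\in\ker(\tit_g)$ (the paper's Lemma \ref{lem_t_phi} and equation \eqref{eq_def_t}), prove that $\bs,\bt$ land in $\Fr(C)\times_M\Fr(E_M)$ and that $\bm,\bu,\bi$ output \frs\ frames (the paper's Lemmas \ref{prop_induced_s-frame_A}, \ref{prop_induced_m}, \ref{prop_induced_u}, \ref{prop_induced_i}), then reduce the groupoid axioms to those of $E_\cG\tto E_M$ using the interchange law and $\tiR_{gh}=\tiR_h\circ\tiR_g$. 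However, the one concrete prediction you make about the bookkeeping is wrong, and wrong in a way that matters. The relation is \emph{not} $d_{\bm(\phi_g,\phi_h)}=(\text{conjugate of }d_{\phi_g})+d_{\phi_h}$: the computation shows that $d$ is strictly invariant. Composability $\bs(\phi_g)=\bt(\phi_h)$ already forces $d_{\phi_g}=d_{\phi_h}$ (the paper's equation \eqref{eq_t_gh_composition}, obtained by applying $\tit$ to the equality $\bs(\phi_g)^c=\bt(\phi_h)^c$ and using $\bs(\phi_g)^b=\bt(\phi_h)^b$), and then $d_{\bm(\phi_g,\phi_h)}=d_{\phi_g}$ and $d_{\bi(\phi_g)}=d_{\phi_g}$ (equations \eqref{eq_t_gh} and \eqref{eq_t_inverse}). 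An additive formula of your shape cannot return $d$ itself when $d_{\phi_g}=d_{\phi_h}=d\neq 0$, so proving it would fail. This invariance is not a side remark: it is precisely the input for the longest computation in the paper's proof, namely $\bs(\bm(\phi_g,\phi_h))^c=\bs(\phi_h)^c$, and it is what later lets $\phi_g\mapsto d_{\phi_g}$ serve as moment map for the $\GL(l,k)$-action (Theorem \ref{prop_2-action_on_frames}). The conjugation-plus-translation behaviour you anticipate does occur, but for the off-diagonal blocks $J$ of the matrices relating two frames over the \emph{same} arrow (Lemma \ref{prop.change.of.coordinates}), i.e.\ in the action direction, not in the groupoid direction.

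A second, smaller issue: Lemma \ref{lemma_automatic_s_section} assumes its input is already a frame, i.e.\ a linear isomorphism, so you cannot invoke it to conclude that $\bm(\phi_g,\phi_h)$ is an \frs\ frame before showing that $\bm(\phi_g,\phi_h)$ is injective; the same remark applies to $\bu$ and $\bi$, whose framehood you declare immediate. The fix is short and is how the paper proceeds in Lemma \ref{prop_induced_m}: if $\bm(\phi_g,\phi_h)(w,v)=0$, applying $\tis$ annihilates the $\tiR_h$ term and gives $\bs(\phi_h)^b(v)=0$, hence $v=0$, after which $\tiR_h(\phi_g(w,0))=0$ forces $w=0$ by injectivity of $\tiR_h$ and $\phi_g$. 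With these two corrections your plan does reproduce the paper's proof.
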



\begin{lem}\label{lem_t_phi}
The map $\dphig$ from Definition \ref{def_t_phi} satisfies the following properties:
\begin{enumerate}
\item $\dphig=d_{\bu \bt \phi_g}=d_{\bu \bs \phi_g}$;
\item if $\phi_g$ and $\phi_h$ are composable, then $d_{\phi_g}=d_{\phi_h}=d_{\bm(\phi_g,\phi_h)}$.
\end{enumerate}
\end{lem}

\begin{proof}
It is also clear that 1. implies 2.; moreover, by definition of $\dphig$, one has $\dphig=d_{\bu \bt \phi_g}$. We only need to prove $d_{\phi_g}=d_{\bu \bs \phi_g}$:
\begin{eqnarray*}
d_{\bu\bs\phi_g} (w) 
&=& \Big( (\bs(\phi_g)^b)^{-1} \circ \tit \circ \bs(\phi_g)^c \Big) (w) \\
&=& (\bs(\phi_g)^b)^{-1} \Big( \tit (\tiR_{g^{-1}} (\tii (\phi_g (-w,\dphig(w)) ) )) \Big) \\
&=& (\bs(\phi_g)^b)^{-1} \Big( \tis (\phi (-w,\dphig(w)) ) \Big) \\
&=& (\bs(\phi_g)^b)^{-1} \Big( \tis (\phi_g (0,\dphig(w)) ) - \cancel{\tis (\phi_g (w,0) )} \Big) \\
&=& \dphig(w). 
\end{eqnarray*} \qedhere
\end{proof}

The proof of Theorem \ref{thm_special_frames_are_groupoid} is based on the fact that $\Fr^\sbis(E_\cG)$ can be described by means of its fat groupoid $\tilde{\cG}(E_\cG)$ (Definition \ref{def_fat_groupoid}). Notice first that the representations \eqref{eq_representation_fat_groupoid_on_base} and \eqref{eq_representation_fat_groupoid_on_core} induce a left action of $\tilde{\cG}(E_\cG)$ on $\Fr(C) \to M$ and $\Fr(E_M) \to M$, hence an action (which we denote $\mathrm{act}$) of $\tilde{\cG}(E_\cG)$ on $\Fr(C) \times_M \Fr(E_M) \to M$. One can therefore consider the action groupoid
\[
 \tilde{\cG}(E_\cG) \times_M (\Fr (C) \times_M \Fr(E_M)) \tto \Fr (C) \times_M \Fr(E_M),
\]
with source $\pr$ and target $\mathrm{act}$.

\begin{lem}\label{lemma_isomorphism_action_groupoid}
Let $E_\cG$ be a VB-groupoid. The function
\[
F: \Fr^\sbis(E_\cG) \to \tilde{\cG}(E_\cG) \times_M (\Fr (C) \times_M \Fr(E_M)), \quad \quad \phi_g \mapsto \Big( \big( g,H_g:= \Ima (\phi_g (0,\cdot)) \big), \big( \bs (\phi_g)^c, \bs(\phi_g)^b \big) \Big)
\]
is a bijection such that
\[
\bs (\phi_g) = \pr (F (\phi_g)), \quad \quad \bt (\phi_g) = \mathrm{act} (F (\phi_g)) \quad \quad \forall \phi_g \in \Fr^\sbis(E_\cG).
\]
\end{lem}

\begin{proof}
The function $F$ is well defined: indeed, the pair $(g,H_g)$ belongs to the fat groupoid by definition of bisection frame, while $\bs(\phi_g)^c$ and $\bs (\phi_g)^b$ are frames of $C$ and $E_M$, respectively, by Lemma \ref{prop_induced_s-frame_A} and \ref{lemma_equivalent_def_bisection_frames}. Furthermore, $F$ is a bijection, with inverse given by
\[
 F^{-1} \Big( (g,H_g),(\varphi^c,\varphi^b) \Big) (w,v) := (\tis|_{H_g})^{-1} (\varphi^b (v) ) + (\tis|_{H_g})^{-1} (\tit (\varphi^c (w))) - \tii (\tiR_g)^{-1} (\varphi^c (w)).
\]
The definition of $\pr$ and of $F$ yields directly the first condition, i.e.\
 \[
  \pr (F(\phi_g)) = \big( \bs (\phi_g)^c, \bs(\phi_g)^b \big) = \bs (\phi_g)
 \]
To check the second condition, we have to compute
\[
 \mathrm{act} (F(\phi_g)) = \Big( (g,H_g) \cdot \bs (\phi_g)^c, (g,H_g) \cdot \bs(\phi_g)^b \Big).
\]
Using \eqref{eq_representation_fat_groupoid_on_base}, the second component becomes
\begin{eqnarray*}
(g,H_g) \cdot \bs(\phi_g)^b (\cdot) &=& (g,H_g) \cdot \tis_g (\phi_g (0,\cdot) ) \\
&=& \tit_g \circ (\tis_g|_{\Ima (\phi_g (0,\cdot))})^{-1} \circ \tis_g \circ \phi_g|_{\{0\} \times \RR^k} (\cdot) \\
&=&  \tit_g (\phi_g (0, \cdot) ) = \bt(\phi_g)^b (\cdot).
\end{eqnarray*}
For the first component, first we notice that, for every $w \in \RR^l$,
\[
 \alpha_g := \phi_g (0,d_{\phi_g}(w))
\]
belongs to $H_g = \Ima (\phi_g|_{\{0\} \times \RR^k})$ and satisfies
\begin{eqnarray*}
 \tit (\bs(\phi_g)^c (w)) &=& \tit (\tiR_g \circ \tii \circ \phi_g  (-w, d_{\phi_g} w)) \\
 &=& \tit (\tii \circ \phi_g  (-w, d_{\phi_g} w) ) \\
 &=& \tis (\phi_g  (-w, d_{\phi_g} w) ) \\
 &=& \tis (\phi_g  (0, d_{\phi_g} w) ) + \cancel{\tis (\phi_g  (-w, 0) )} \\
 &=& \tis(\alpha_g).
\end{eqnarray*}
Furthermore, using the interchange law,
\begin{eqnarray*}
 \bs(\phi_g)^c (w) &=& \tiR_g (\tii (\phi_g (-w, d_{\phi_g} (w))) ) \\
 &=& \tim \Big( \tii \big(\phi_g (-w, d_{\phi_g} (w)) \big), 0_g \Big) \\
 &=& \tim \Big( \tii \big(\phi_g (0, d_{\phi_g} (w)) \big) + \tii \big( \phi_g (-w,0) \big), \phi_g (w,0) + \phi_g (-w,0) \Big) \\
 &=& \tim \Big( \tii \big(\phi_g (0, d_{\phi_g} (w)) \big), \phi_g (w,0) \Big) + \tim \Big( \tii \big( \phi_g (-w,0) \big), \phi_g (-w,0) \Big) \\
&=& \tii( \alpha_g ) \circ \phi_g (w,0) + \tiu \big( \tis (\phi_g (-w,0)) \big) \\
&=& \tii( \alpha_g ) \circ \phi_g (w,0) + 0_{1_{s(g)}} = \tii (\alpha_g ) \circ \phi_g (w,0).
\end{eqnarray*}
Therefore, using \eqref{eq_representation_fat_groupoid_on_core},
\begin{eqnarray*}
(g,H_g) \cdot \bs(\phi_g)^c (w) &=& 
\alpha_g \circ \bs(\phi_g)^c (w) \circ 0_{g^{-1}} \\
&=& \alpha_g \circ \tii (\alpha_g) \circ \phi_g (w,0) \circ 0_{g^{-1}} \\
&=& 0_{1_{t(g)}} \circ \phi_g (w,0) \circ 0_{g^{-1}} \\
&=&  \tiR_{g^{-1}} \circ \phi_g (w, 0) = \bt(\phi_g)^c (w).
\end{eqnarray*}
\end{proof}

\begin{proof}[Proof of Theorem \ref{thm_special_frames_are_groupoid}]

In general, given three sets $\cG_1$, $\cG_2$ and $M$, and four maps $s_1,t_1: \cG_1 \to M$, $s_2,t_2: \cG_2 \to M$, if there is a bijection between $\cG_1$ and $\cG_2$ which preserves the maps $s_1,s_2$ and $t_1,t_2$, there we can transport a groupoid multiplication on $\cG_2$ (with source $s_2$ and target $t_2$) to one on $\cG_1$ (with source $s_1$ and target $t_1$). The statement of the Theorem then follows from Lemma \ref{lemma_isomorphism_action_groupoid}, by transporting the structure maps of the action groupoid to $\Fr(E_\cG)^{\rm \sbis}$. Indeed, the multiplication satisfies
\begin{eqnarray*}
 F (\bm (\phi_g, \phi_h)) &=& F \Big( \tim (\phi_g|_{\{0\} \times \RR^k }, \phi_h|_{\{0\} \times \RR^k }) + \tiR_h (\phi_g|_{\RR^l \times \{0\} }) \Big) \\
 &=& \Big( \big( gh, \Ima (\tim (\phi_g|_{\{0\} \times \RR^k }, \phi_h|_{\{0\} \times \RR^k }))  \big), \big( \bs ( \bm (\phi_g,\phi_h) )^c, \bs ( \bm (\phi_g,\phi_h) )^b \big) \Big) \\
 &=& \Big( \big( gh, \tim (H_g,H_h) \big) , \big( \bs(\phi_h)^c, \bs(\phi_h)^b \big)  \Big) \\
 &=&  F(\phi_g) F (\phi_h),
\end{eqnarray*}
while in the last line we used the multiplication of the action groupoid. Similarly, for the unit map one has
\begin{eqnarray*}
 F (\bu (\varphi^c_x, \varphi^b_x)) &=& \Big( \big(1_x, \Ima( \tiu (\varphi^b_x) ) \big), \bs (\bu (\varphi^c_x, \varphi^b_x))^c,  \bs (\bu (\varphi^c_x, \varphi^b_x))^b \Big) \\
 &=& \Big( \big(1_x, \tiu (E_M)_x \big), \varphi^c_x, \varphi^b_x \Big) \\
 &=&  u (\varphi^c_x, \varphi^b_x),
\end{eqnarray*}
and for the inversion
\begin{eqnarray*}
 F (\bi (\phi_g)) &=& \Big( \big( g^{-1}, \Ima (\tii (\phi_g (0, \cdot))) \big), \big( \bs (\bi (\phi_g))^c, \bs (\bi (\phi_g))^b \big) \Big) \\
 &=& \Big( \big( g^{-1}, \tii (\Ima (\phi_g (0, \cdot))) \big), \big( \bt (\phi_g)^c, \bt (\phi_g) \big) \Big) \\
 &=&  i (F(\phi_g)).
\end{eqnarray*}
%
%
%
\end{proof}

\begin{rk}[PB-groupoids and transitivity]
It is known that, given a VB-groupoid $E_\cG \tto E_M$ over $\cG \tto M$, if the groupoid $E_\cG \tto E_M$ is transitive, then $\cG \tto M$ is transitive as well (but the converse does not hold). For PB-groupoids the issue is more intricate, since there are more groupoid structures in play (including those of the structural Lie 2-groupoid). In particular, it is not clear if/when the groupoid structure on $\Fr(E_\cG)^{\rm \sbis} \tto \Fr(C) \times_M \Fr(E_M)$ from Theorem \ref{thm_special_frames_are_groupoid} is transitive; we are currently looking into it, with the goal of providing an equivalent description in term of a ``gauge PB-groupoid''. This could shed lights on the relation between our correspondence VB/PB-groupoids, and the correspondence DVB/DPB discussed in \cite{LangLiLiu21}.
\end{rk}


%

\subsection{The (set-theoretical) 2-action on the \frs\ frames}

As anticipated, after having shown that $\Fr^\sbis(E_\CG)\soutar \Fr(C)\times_M \Fr(E_M)$ is a set-theoretical groupoid (Theorem \ref{thm_special_frames_are_groupoid}), we describe now the (free) action of the general linear 2-groupoid $\GL(l,k)$ on $\Fr^\sbis (E_\cG)$. In the next section we will deal with the smoothness of these spaces, and such action will turn out to be smooth and principal.

To this end, let us  \cite{Sternberg64, Kobayashi95}recall (e.g.\ from  \cite{Sternberg64, Kobayashi95}) that, given any Lie subgroup $G \subseteq \GL(l+k)$, a \textbf{$G$-structure} on a vector bundle $E \to M$ of rank $l+k$ is a principal $G$-subbundle of the frame bundle $\Fr(E) \to M$. In particular, let $\GL^l(l+k)$ denote the group of block matrices $\begin{pmatrix}
A_1 & A_2\\
0 & A_4
\end{pmatrix}$, where $A_1 \in \GL(l)$ and $A_4 \in \GL(k)$. Then there is a 1-1 correspondence
\[
 \Bigg\{ \GL^l (l+k)\text{-structures on } E \Bigg\} \Longleftrightarrow \Bigg\{ \text{rank } l \text{ vector subbundle } D \subseteq E \text{ over } M \Bigg\} .
\]
Indeed, to any such $D$ one associates the subbundle $\Fr(E,D) \subseteq \Fr(E)$ consisting of frames $\phi_x \colon \RR^{l+k} \to E_x$ preserving $D_x$, i.e.\ such that $\phi_x (\RR^k \times \{0\}) \subseteq D_x$. The right $\GL(l+k)$-action on $\Fr(E)$, i.e.\ $(\phi_x, A) \mapsto \phi_x \circ A$, restricts to a $\GL^l(l+k)$-action on $\Fr(E,D)$, since $\phi_x (A (w,0)) = \phi_x (A_1 w,0)$. We conclude that $\Fr(E,D)$ is a principal $\GL^l(l+k)$-subbundle of $\Fr(E)$. Conversely, given any principal $\GL^l (l+k)$-subbundle $P \subseteq \Fr(E)$, one defines uniquely a rank $k$ subbundle $D \subseteq E$ by taking $D_x$ as the union of the subspaces $\phi_x (\RR^l \times \{0\}) \subseteq E_x$, for all $\phi_x \in P_x$, so that $P = \Fr(E,D)$.


\begin{thm}\label{prop_2-action_on_frames}
	Let $E_\CG\soutar E_M$ be a VB-groupoid of rank $(l,k)$ over a Lie groupoid $\CG\soutar M$. There the (principal) right Lie group actions of $\GL(l+k)$ on $\Fr(E_\cG)$ and of $\GL(l) \times \GL(k)$ on $\Fr(C) \times \Fr(E_M)$ induce a canonical right free 2-action of the (Lie) 2-groupoid $\GL(l,k)$ (Definition \ref{ex_GL_2_groupoid}) on the (set-theoretical) groupoid $\Fr^\sbis (E_\cG) \tto \Fr(C) \times \Fr(E_M)$ from Theorem \ref{thm_special_frames_are_groupoid}, with moment map given by Lemma \ref{lem_t_phi} as
	\[
	\mu \colon \Fr^\sbis(E_\CG) \to \GL(l,k)_0, \quad \phi_g \mapsto \dphig.
	\]
More precisely, the induced 2-action is defined as follows:
\begin{itemize}
 \item the right groupoid action of $\GL(l,k)_2 \tto \GL (l,k)_0$ on $\Fr^\sbis(E_\cG)$ is given by
 \[
  \phi_g \cdot \left(d,\left(\begin{matrix}
  	A & JB \\
  	0 & B
  \end{matrix}\right)\right) := \phi_g \cdot \left(\begin{matrix}
  A & JB \\
  0 & B
\end{matrix}\right);
 \]
\item the right groupoid action of $\GL(l,k)_1 \tto \GL (l,k)_0$ on $\Fr (C) \times_M \Fr(E_M)$ is given by
\[
 (\varphi_x^c,\varphi_x^b) \cdot (d,A,B) := (\varphi_x^c \cdot A, \varphi_x^b \cdot B).
\]
\end{itemize}
\end{thm}


%
%

\begin{proof}
First of all notice that $\ker(\tis) \subseteq E_\cG$ is a rank $l$ vector subbundle, and the corresponding $\GL^l(l+k)$-structure $\Fr(E_\cG,\ker(\tis))$ (see the discussion above) coincides precisely with the set  of all frames $\phi_g \in \Fr(E_\cG)$ satisfying the first condition of Definition \ref{def_special_frames}. Accordingly, the $\GL(l+k)$-action on $\Fr(E_\cG)$ restricts to a $\GL^l(l+k)$-action on $\Fr(E_\cG,\ker(\tis))$.


When imposing also the second condition of Definition \ref{def_special_frames}, i.e.\ restricting $\Fr(E_\cG,\ker(\tis))$ to $\Fr^\sbis (E_\cG)$, then the action of the Lie \textit{group} $\GL^l(l+k) \subseteq \GL(l+k)$ induces an action of the Lie \textit{2-groupoid} $\GL(l,k)$ from Definition-Example \ref{ex_GL_2_groupoid}. Indeed, by Lemma \ref{lem_t_phi}, the map $\mu$ satisfies $\mu(\phi_g)=\mu(\bu\bs\phi_g)=\mu(\bu\bt\phi_g)$ for any \frs\ frame $\phi_g$, hence this is a Lie 2-groupoid action as a direct consequence of Proposition \ref{prop.change.of.coordinates}. 
\end{proof}

\begin{lem}\label{prop.change.of.coordinates}
Let $E_\cG \tto E_M$ be a VB-groupoid of rank $(l,k)$ over $\cG \tto M$ and consider two composable arrows $g,h\in \CG$, three frames $\phi_g,\phi_g',\phi_g'' \in \Fr^{\sbis}(E_\CG)$, and two frames $\phi_{h}, {\phi_{h}'} \in \Fr^{\sbis}(E_\CG)$ such that $\phi_g,\phi_h$ and $\phi_g',\phi_h'$ are composable with respect to the groupoid structure on $\Fr^\sbis (E_\cG) \tto \Fr(C) \times_M \Fr(A)$ from Theorem \ref{thm_special_frames_are_groupoid}.
	\begin{enumerate}
		\item The unique matrix $M_g\in \GL(l+k)$ satisfying $\phi_g M_g= \phi_g'$ also satisfies $(d_{\phi_g},M_g)\in \GL(l,k)_2$ and $d_{\phi_g'}=\bs_{20}(d_{\phi_g},M_g)$.
		\item If $M_g',M_g''\in \GL(l+k)$ are the unique matrices satisfying $\phi_g' M_g'= \phi_g''$ and $\phi_g M_g''= \phi_g''$, then
		$$(d_{\phi_g},M_g'')=(d_{\phi_g},M_g)\circ_{20} (d_{\phi_g'},M_g').$$
		\item If $C,A\in\GL(l)$ and $D,B\in\GL(k)$ are the unique matrices satisfying 
		$$\bs(\phi_g)_c C= \bs(\phi_g')_c \,\,,\,\, \bs(\phi_g)_b B= \bs(\phi_g')_b\,\,,\,\, \bt(\phi_g)_c A= \bt(\phi_g')_c \y \bt(\phi_g)_b D= \bt(\phi_g')_b,$$
		then 
		$$(d_{\phi_g},C,B)=s_{21}(d_{\phi_g},M_g)\in \GL(l,k)_1 \y (d_{\phi_g},A,D)=t_{21}(d_{\phi_g},M_g)\in \GL(l,k)_1.$$
		\item If $M_h$ is the unique matrix satisfying $\phi_h M_h= \phi_h'$, and $M_{gh}$ is the unique matrix satisfying $(\phi_g\circ \phi_h) M_{gh}= (\phi_g'\circ \phi_h')$,
		then
		$$(d_{\phi_g},M_{gh})=(d_{\phi_g},M_{g})\circ_{21} (d_{\phi_h},M_{h}).$$
	\end{enumerate}
\end{lem}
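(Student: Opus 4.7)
For \textbf{Part 1}, since both $\phi_g$ and $\phi_g'$ satisfy condition~1 of Definition~\ref{def_special_frames}, we have $\phi_g^{-1}(\ker(\tis_g)) = \RR^l\times\{0\} = \phi_g'^{-1}(\ker(\tis_g))$, so $M_g := \phi_g^{-1}\circ\phi_g'$ preserves $\RR^l\times\{0\}$ and therefore has block upper-triangular form $M_g = \left(\begin{smallmatrix}A & \widetilde{J}\\ 0 & B\end{smallmatrix}\right)$ with $A\in\GL(l)$ and $B\in\GL(k)$; writing $\widetilde{J}=JB$ is then a matter of convention. To certify that $(d_{\phi_g},M_g)\in\GL(l,k)_2$, I will use that $\phi_g'$ is a $t$-section frame: expanding
\[
\tit(\phi_g'(0,v)) = \tit(\phi_g(JBv, Bv))
\]
via equation~\eqref{eq_def_t} gives $\bt(\phi_g)^b((I+d_{\phi_g}J)Bv)$, which must be a linear isomorphism, forcing $(I+d_{\phi_g}J)\in\GL(k)$; the companion condition $(I+Jd_{\phi_g})\in\GL(l)$ then follows from Sylvester's determinant identity $\det(I+Jd)=\det(I+dJ)$. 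Applying the same expansion to $\tit(\phi_g'(w,0))=\tit(\phi_g'(0,d_{\phi_g'}(w)))$ yields $d_{\phi_g}Aw = (I+d_{\phi_g}J)B\,d_{\phi_g'}(w)$, which inverts to the desired formula for $\bs_{20}(d_{\phi_g},M_g)$. \textbf{Part 2} is then immediate: combining $\phi_g M_g=\phi_g'$ and $\phi_g' M_g'=\phi_g''$ gives $M_g'' = M_g M_g'$ by the injectivity of $\phi_g$, and a direct expansion matches the formula for $\circ_{20}$ from Definition~\ref{ex_GL_2_groupoid}.

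For \textbf{Part 3}, I will compute each of the four frame-change matrices directly from the formulas for $\bs,\bt$ in Theorem~\ref{thm_special_frames_are_groupoid} applied to $\phi_g' = \phi_g\circ M_g$. The cases $A$, $B$, $D$ are straightforward: using condition~1 and equation~\eqref{eq_def_t}, one finds $\bt(\phi_g')^c = \bt(\phi_g)^c\circ A$, $\bs(\phi_g')^b = \bs(\phi_g)^b\circ B$ and $\bt(\phi_g')^b = \bt(\phi_g)^b\circ(I+d_{\phi_g}J)B$. The non-trivial identity is $\bs(\phi_g')^c = \bs(\phi_g)^c\circ C$: expanding $\phi_g'(-w,d_{\phi_g'}(w))$ in the coordinates of $\phi_g$ and substituting the formula for $d_{\phi_g'}$ from Part 1, I will reduce $C$ to $A - J(I+d_{\phi_g}J)^{-1}d_{\phi_g}A$, which equals $(I+Jd_{\phi_g})^{-1}A$ by the push-through identity $(I+Jd)^{-1} = I - J(I+dJ)^{-1}d$. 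Comparison with Definition~\ref{ex_GL_2_groupoid} then gives the claim.

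For \textbf{Part 4}, the strategy is to determine $M_{gh}$ directly from the defining identity $(\phi_g\circ\phi_h)\circ M_{gh} = (\phi_g\circ M_g)\circ(\phi_h\circ M_h)$, using the explicit composition formula of Lemma~\ref{prop_induced_m}. Applying Part~3 to the composed frames and using $\bs(\phi_g\circ\phi_h) = \bs(\phi_h)$, $\bt(\phi_g\circ\phi_h) = \bt(\phi_g)$ immediately identifies the diagonal blocks $A'' = A$ and $B'' = B'$, and composability in $\circ_{21}$ forces the $d$-component via equation~\eqref{eq_t_gh_composition}. Setting $d := d_{\phi_g}=d_{\phi_h}$ and $v' := B'v$ and using $B=(I+dJ')B'$, after cancelling the common $\tiR_h(\phi_g(Aw,0))$ from both sides, the identification of the off-diagonal block reduces to verifying
\[
\tim\bigl(\phi_g(J(I+dJ')v',(I+dJ')v'),\ \phi_h(J'v',v')\bigr)
= \tim(\phi_g(0,v'),\phi_h(0,v')) + \tiR_h\bigl(\phi_g(J''v',0)\bigr)
\]
with $J'' := JdJ' + J + J'$, matching the formula for $\circ_{21}$ in Definition~\ref{ex_GL_2_groupoid}.

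The main obstacle lies in this last identity. The natural $\RR^l\oplus\RR^k$ decomposition of the arguments of $\tim$ does not respect the composability constraint $\tis(e_i)=\tit(e_i')$ required by the interchange law, so the bilinearity of $\tim$ cannot be applied na\"ively. I will overcome this by performing the splits $\phi_g(J(I+dJ')v',(I+dJ')v') = \phi_g(0,v') + \phi_g(J(I+dJ')v',dJ'v')$ and $\phi_h(J'v',v') = \phi_h(0,v') + \phi_h(J'v',0)$, verifying directly that the induced pairs are composable (the diagonal pair matches $\bs(\phi_g) = \bt(\phi_h)$, the other uses equation~\eqref{eq_def_t}); and then further splitting $\phi_h(J'v',0) = \phi_h(J'v',-dJ'v') + \phi_h(0,dJ'v')$, where $\phi_h(J'v',-dJ'v')\in\ker(\tit_h)$ again by~\eqref{eq_def_t}. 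The resulting mixed-type terms $\tim(a,b)$ with $a\in\ker(\tis_g)$ and $b\in\ker(\tit_h)$ will be handled via the auxiliary identity $\tim(a,b) = \tim(a,0_h) + \tim(0_g,b)$, itself an easy consequence of bilinearity applied to the splits $a = a + 0_g$ and $b = 0_h + b$. Collecting the contributions yields exactly $\tiR_h(\phi_g((J+J'+JdJ')v',0))$, completing the verification.
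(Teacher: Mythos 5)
Parts 1--3 of your plan are correct and essentially reproduce the paper's own argument: the paper writes out only part 3, and your computation there is the same one (first $D=(I+\dphig J)B$ from \eqref{eq_def_t}, then $C=\bigl(I-J(I+\dphig J)^{-1}\dphig\bigr)A=(I+J\dphig)^{-1}A$ via the push-through identity, then comparison with Definition \ref{ex_GL_2_groupoid}). Your part 1 adds a detail the paper leaves implicit, namely that $(I+J\dphig)\in\GL(l)$ follows from $(I+\dphig J)\in\GL(k)$ by Sylvester's determinant identity, and the formula you derive, $d_{\phi_g'}=((I+\dphig J)B)^{-1}\dphig A$, is the intended one (Definition \ref{ex_GL_2_groupoid} labels this expression $t_{20}$ rather than $\bs_{20}$; that mismatch is internal to the paper, not an error of yours). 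Part 2 is indeed immediate once one notes that $\circ_{20}$ is matrix multiplication.

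The genuine gap is in part 4 --- precisely the part the paper dismisses as ``similar arguments''. Your reduction to the displayed identity is correct, and so are your splittings and the auxiliary identity $\tim(a,b)=\tim(a,0_h)+\tim(0_g,b)$ for $a\in\ker(\tis_g)$, $b\in\ker(\tit_h)$. But after applying them, what remains is not bookkeeping. Writing $d=\dphig=\dphih$ and $c=J'v'$, the terms still on the table are
\[
\tim\bigl(0_g,\phi_h(c,-dc)\bigr)+\tim\bigl(\phi_g(0,dc),\phi_h(0,dc)\bigr),
\]
and for your ``collection'' to close they must equal $\tiR_h(\phi_g(c,0))$ --- this is exactly where the summand $J'$ of $J''=JdJ'+J+J'$ has to come from. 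Recombining the two terms by the interchange law, the needed statement is the single identity
\[
\tim\bigl(\phi_g(0,dc),\phi_h(c,0)\bigr)=\tim\bigl(\phi_g(c,0),0_h\bigr),
\]
which converts the core-type element $\phi_h(c,0)$ sitting over $h$ into the core-type element $\phi_g(c,0)$ sitting over $g$. None of the ingredients you invoke in part 4 (the base compatibility $\bs(\phi_g)^b=\bt(\phi_h)^b$, equation \eqref{eq_def_t}, equation \eqref{eq_t_gh_composition}) can produce this: the only link between the $\RR^l$-blocks of $\phi_g$ and $\phi_h$ is the core half of the composability assumption, $\bs(\phi_g)^c=\bt(\phi_h)^c$, which your plan never uses. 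With it the identity does hold: it gives $\phi_h(c,0)=\tiR_h\bigl(\bs(\phi_g)^c(c)\bigr)=\tim\bigl(\tii(\phi_g(-c,dc)),0_{gh}\bigr)$; then splitting $\phi_g(0,dc)=\phi_g(-c,dc)+\phi_g(c,0)$ against $\tii(\phi_g(-c,dc))+0_{g^{-1}}$ yields $\tim\bigl(\phi_g(0,dc),\tii(\phi_g(-c,dc))\bigr)=\tiR_{g^{-1}}(\phi_g(c,0))$, and associativity plus $\tim(0_{g^{-1}},0_{gh})=0_h$ finishes. So your plan is completable, but as written the final ``collecting the contributions'' step asserts exactly the one identity in part 4 that requires a real argument, and it omits both that argument and the hypothesis ($\bs(\phi_g)^c=\bt(\phi_h)^c$) on which it depends.
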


\begin{proof}
	This is mostly a calculation. We will only show part 3; the other parts can be proved with similar arguments.
	
	Given any $\phi_g,\phi_g'\in \Fr(E_\CG)_g$, there exists unique matrices $A$, $B$, $C$, $D$ and $M_g$ (interpreted as ``changes of coordinates'') such that
	$$\begin{tikzcd}
		(\bt(\phi_g)^{c},\bt(\phi)^{b})\arrow[d, "A\times D"] & \phi_g \ar[l]\ar[r] \ar[d, "M_g"]	& (\bs(\phi_g)^{c},\bs(\phi_g)^{b}) \ar[d, "C\times B"]\\
		(\bt(\phi_g')^{c},\bt(\phi_g')^{b}) & \phi_g' \ar[l]\ar[r] & (\bs(\phi_g')^{c},\bs(\phi_g')^{b})
	\end{tikzcd}$$
 Because of the definitions of $\bs,\bt$ for the \frs\ frames, in particular the first $l$ coordinates in $\bt$ and the last $k$ coordinates on $\bt$,  the matrix $M_g\in \GL(l+k)$ must be of the form: 
	$$M_g= \left(\begin{matrix}
		A & JB \\
		0 & B
	\end{matrix}\right),$$
	for some $l\times k$ matrix $J$. Spelling out what this means one gets that:
	\begin{itemize}
		\item  $A$ is the unique matrix such that $\phi_g(Aw,0)-{\phi_g'}(w,0)=0$ for all $w\in \KR^l$;
		\item $B$ is the unique matrix such that $\tis\phi_g(0,Bv)-\tis{\phi_g'}(0,v)=0$  for all $v\in \KR^k$;
		\item  $J$ is the unique $l\times k$ matrix such that $\phi_g(JBv,Bv)={\phi_g'}(0,v)$ for every $v\in \KR^k$, equivalently $J=\phi_g^{-1}\left({\phi_g'}(0,B^{-1}\cdot)-\phi_g(0,\cdot)\right)$;
		\item  $\phi_g(Aw+JBv,Bv)=\phi_g'(w,v)$ for all $(w,v)\in \KR^{l+k}$.
	\end{itemize} 
	
	Now we want to calculate $D$ and $C$ in terms of $A,B,J$ and $d_{\phi_g}$. By definition, $D$ is the matrix such that  $\bt\phi_g(0,Dv)=\bt{\phi_g'}(0,v)$. Using the definition of $d_{\phi_g}$ for the $(I)$-equality and that $\phi_g M_g=\phi_g'$ in the $(II)$-equality we get: $$\bt\phi_g(0,d_{\phi_g} JBv+Bv)\stackrel{(I)}{=}\bt\phi_g(JBv,Bv)\stackrel{(II)}{=}\bt\phi_g'(0,v)=\bt\phi_g(0,Dv),$$ therefore we conclude $D=d_{\phi_g}JB+B=(I+d_{\phi_g}J)B.$
	
	The value for $D$ implies part of statement 1 of this Lemma, i.e. $\gds_{20}(d_{\phi_g}, M_g)=((I+d_{\phi_g}J)D)^{-1}d_{\phi_g}A=d_{\phi_g'}$. Later in this argument we will use this fact in the form of $ d_{\phi_g}A=(I+d_{\phi_g}J)Dd_{\phi_g'}$.

	Last, $C$ is the unique $l\times l$ matrix satisfying, for any $w\in \KR^l$, 
	$$\bs(\phi_g)^c(Cw)=\bs(\phi_g')^c(w) \,\,\, \Rightarrow \,\,\tiR_{g}\tii \big( \phi_g(Cw,-d_{\phi_g} Cw)\big)=\tiR_{g} \tii\big( {\phi_g'}(w,-d_{\phi_g'} w)\big),$$
	so that
	$$\phi_g(Cw,-d_{\phi_g} Cw)= {\phi_g'}(w,-d_{\phi_g'} w)=\phi_g(Aw-JDd_{\phi_g'} w,-Dd_{\phi_g'} w).$$
	Now we use that $d_{\phi_g}A=(I+d_{\phi_g}J)Dd_{\phi_g'}$ and obtain
	$$\phi_g(Cw,-d_{\phi_g} Cw)=\phi_g(Aw-J(I+d_{\phi_g}J)^{-1}d_{\phi_g} A w,-(I+d_{\phi_g}J)^{-1}d_{\phi_g} A w).$$
	Therefore we have $C=(I-J(I+d_{\phi_g}J)^{-1}d_{\phi_g})A$. To conclude the proof, we need to show that $I-J(I+d_{\phi_g}J)^{-1}d_{\phi_g}=(I+Jd_{\phi_g})^{-1}$. Indeed,
	\begin{eqnarray*}
		(I-J(I+d_{\phi_g}J)^{-1}d_{\phi_g})(I+Jd_{\phi_g})&=& I-J(I+d_{\phi_g}J)^{-1}d_{\phi_g}+Jd_{\phi_g}-J(d_{\phi_g}J+I)^{-1}d_{\phi_g}Jd_{\phi_g}\\
		&=& I-J\left((I+d_{\phi_g}J)^{-1}-I+(d_{\phi_g}J+I)^{-1}d_{\phi_g}J \right)d_{\phi_g}\\
		&=& I-J\left((d_{\phi_g}J+I)^{-1}(I+d_{\phi_g}J)-I\right)d_{\phi_g}=I,
	\end{eqnarray*}
	and similarly $(I+Jd_{\phi_g}) (I-J(I+d_{\phi_g}J)^{-1}d_{\phi_g}) = I$.\\
	
	The discussion above is summarised by the source and target as in as in Definition \ref{ex_GL_2_groupoid}:
	$$\begin{tikzcd}
		\left(d_{\phi_g},\begin{matrix}
			A \, , \, (I+d_{\phi_g} J)B=D
		\end{matrix}\right) & \left(d_{\phi_g},
		\left(\begin{matrix}
			A & JB \\
			0 & B
		\end{matrix}\right)\right) \ar[l,maps to, swap, "\gdt_{21}"] \ar[r,maps to,"\gds_{21}"]  &
		\left(d_{\phi_g},\begin{matrix}
        C=(I+Jd_{\phi_g})^{-1}A \, , \, B 
		\end{matrix}\right)
	\end{tikzcd}.$$
\end{proof}

\subsection{The smooth structure of the \frs\ frames}

As anticipated, in this section we take care of the smoothness issues, concluding the proof that $\Fr^\sbis (E_\cG)$ defines a PB-groupoid.

\begin{thm}\label{thm_special_frames_are_principal_bundles}
	Let $E_\CG\soutar E_M$ be a VB-groupoid of rank $(l,k)$ over a Lie groupoid $\CG\soutar M$. Then
	\begin{itemize}
		\item the total space of the set-theoretical groupoid $\Fr^\sbis(E_\CG)\soutar \Fr(C)\times_M \Fr(E_M)$ from Theorem \ref{thm_special_frames_are_groupoid} is a submanifold of $\Fr(E_\cG)$;
		\item the set-theoretical 2-action from Theorem \ref{prop_2-action_on_frames} is a smooth principal action.
	\end{itemize}
\end{thm}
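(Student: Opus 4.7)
The plan is to reduce both claims to classical facts about principal frame bundles, by exploiting the two-part structure of Definition \ref{def_special_frames}. First I will establish the submanifold structure of $\Fr^\sbis(E_\cG)$, and then use this to deduce that the 2-action of $\GL(l,k)$ is a smooth principal action.

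For the submanifold claim, I will split the defining conditions of an $s$-bisection frame. Condition 1 is exactly the statement that $\phi_g$ lies in the $\GL^l(l+k)$-structure $\Fr(E_\cG,\ker(\tis))$ associated to the rank-$l$ subbundle $\ker(\tis)\subseteq E_\cG$, which, as recalled just before Theorem \ref{prop_2-action_on_frames}, is a closed principal $\GL^l(l+k)$-subbundle of $\Fr(E_\cG)$ and hence in particular an embedded submanifold. By Lemma \ref{lemma_automatic_s_section}, once condition 1 holds the $s$-section-frame part of condition 2 is automatic, so condition 2 reduces to requiring that $\tit\circ\phi_g|_{\{0\}\times\RR^k}$ be a linear isomorphism. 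This is the non-vanishing of a determinant, an open condition on $\Fr(E_\cG,\ker(\tis))$. The conclusion will then follow because an open subset of a submanifold is itself a submanifold.

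For the 2-action, smoothness will be essentially immediate: the moment map $\phi_g\mapsto\dphig$ is a composition of smooth constructions (Lemma \ref{lem_t_phi}), and both the $\GL(l,k)_2$-action on $\Fr^\sbis(E_\cG)$ and the $\GL(l,k)_1$-action on $\Fr(C)\times_M\Fr(E_M)$ are restrictions of the standard matrix-multiplication actions of $\GL(l+k)$ on $\Fr(E_\cG)$ and of $\GL(l)\times\GL(k)$ on the product frame bundle. Freeness will follow from the injectivity of frames: the equation $\phi_g\circ M=\phi_g$ forces $M=I_{l+k}$, which is the unit at $\dphig$ in $\GL(l,k)_2$.

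The main obstacle will be properness, because of the extra book-keeping of the parameter $d$ in the arrow space $\GL(l,k)_2$. My key idea is that the projection $(d,M)\mapsto M$ identifies the fibre product $\Fr^\sbis(E_\cG)\tensor[_{\mu}]{\times}{_{\gdt_{20}}}\GL(l,k)_2$ with an open subset of $\Fr^\sbis(E_\cG)\times\GL^l(l+k)$, namely the one cut out by the invertibility of $I+Jd$ and $I+dJ$. Under this identification the orbit map $(\phi_g,(d,M))\mapsto(\phi_g,\phi_g\circ M)$ becomes a restriction of the orbit map of the proper classical $\GL^l(l+k)$-action on $\Fr(E_\cG,\ker(\tis))$ (itself inherited from the $\GL(l+k)$-action on $\Fr(E_\cG)$), and properness will follow by a direct transfer-along-embedding argument. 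The analogous but easier reduction will give properness of the $\GL(l,k)_1$-action, since $\GL(l,k)_1$ is the action groupoid of the Lie group $\GL(l)\times\GL(k)$ acting fibrewise on the principal frame bundle $\Fr(C)\times_M\Fr(E_M)$.
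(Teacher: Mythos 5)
Your argument is correct, but it takes a genuinely different route from the paper's proof. The paper deduces both bullet points from a bespoke criterion, Lemma \ref{prop.smoothness.on.pb}, which characterises fibre subbundles of a frame bundle by two conditions: existence of smooth local sections with values in the subset, and the fiberwise ``transition sets'' forming a submanifold. The transition sets of $\Fr^\sbis(E_\cG)$ are identified with the fibres of $\GL(l,k)_2$ via Lemma \ref{prop.change.of.coordinates}, and the local sections are produced by an explicit algorithm merging local frames of $\ker(\tis)$ and of $\ker(\tit)$ into frames adapted to both; smoothness of the action and of the relevant bundle maps is then checked fibrewise through the same lemma. You bypass both the criterion and the algorithm: condition 1 of Definition \ref{def_special_frames} cuts out exactly the classical $\GL^l(l+k)$-structure $\Fr(E_\cG,\ker(\tis))$, an embedded principal subbundle, and by Lemma \ref{lemma_automatic_s_section} condition 2 reduces on it to the open condition that $\tit\circ\phi_g|_{\{0\}\times\RR^k}$ be invertible, so $\Fr^\sbis(E_\cG)$ is open in $\Fr(E_\cG,\ker(\tis))$ and hence an embedded submanifold. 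This is softer and more economical, and it gives embeddedness for free; what the paper's route buys in exchange is explicit local triviality of $\Fr^\sbis(E_\cG)\to\cG$ (the sections algorithm) and a fiberwise smoothness criterion that it reuses elsewhere, e.g.\ for the maps $\bs$ and $\bt$.

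One step you should sharpen is properness. A proper map does not remain proper after restriction to an arbitrary open subset of its domain, so ``restriction of the proper classical orbit map'' plus a transfer-along-embedding is not by itself a proof. The correct mechanism is saturation: if $F\colon \Fr(E_\cG,\ker(\tis))\times\GL^l(l+k)\to \Fr(E_\cG,\ker(\tis))\times\Fr(E_\cG,\ker(\tis))$, $F(\phi,M)=(\phi,\phi\circ M)$, denotes the proper classical orbit map, then $F$ restricted to the \emph{full preimage} $F^{-1}\bigl(\Fr^\sbis(E_\cG)\times\Fr^\sbis(E_\cG)\bigr)$ is proper onto $\Fr^\sbis(E_\cG)\times\Fr^\sbis(E_\cG)$. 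Hence you must check that your invertibility locus coincides with this preimage, i.e.\ that for $\phi_g\in\Fr^\sbis(E_\cG)$ and $M\in\GL^l(l+k)$ one has $\phi_g\circ M\in\Fr^\sbis(E_\cG)$ if and only if $(\dphig,M)\in\GL(l,k)_2$: the ``if'' direction is Theorem \ref{prop_2-action_on_frames}, and the ``only if'' direction is part 1 of Lemma \ref{prop.change.of.coordinates}. With that equality in hand your transfer goes through; the analogous reduction for the $\GL(l,k)_1$-action is indeed easier since no invertibility constraint appears there, though note that $\GL(l,k)_1$ is the action groupoid of $\GL(l)\times\GL(k)$ acting on $\Hom(\RR^l,\RR^k)$, not on the frame bundle itself.
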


\begin{proof}
We begin by proving the existence, around any point of $\cG$, of smooth local sections of $\Fr(E_\cG) \to \cG$ taking values in $\Fr^\sbis(E_\CG)$. First of all, since $\tis$ and $\tit$ are submersions, around any $g\in \CG$ there are local sections taking values in $\Fr(\ker(\tis))$ and in $\Fr(\ker(\tit))$. Let us think of frames as a local ordered basis: $(e_1,\dots,e_l)$ is an ordered basis for $\ker(\tis)$ and $(f_1,\dots,f_l)$ an ordered basis for $\ker(\tit)$. We now describe an algorithm to get \frs\ frames.
	
Set a counter $i=1$; take non-vanishing local sections $h_i$ not in the span of $(e_1,\dots,e_{l+i-1})$ and $h_i'$ not in the span of $(f_1,\dots,f_{l+i-1})$, then let $e_{l+i}=f_{l+i}:=h_i+ h_i'$ and redefine $i=i+1$. Repeating this process until $i=k+1$ we get that for $i$ between $l+1$ and $l+k$, the elements $e_i=f_i$ are linearly independent and not in $\ker(\tis)$ nor in $\ker(\tit)$, therefore the ordered basis of sections $(e_1,\dots,e_{l+k})$ is equivalent to an \frs\ frame.

Recall now that, since $\Fr(E_\cG) \to \cG$ is a principal bundle, the smooth structure of $\Fr(E_\cG)$ is given by smooth local sections $\phi_i:U_i\fto \Fr(E_\cG)$, which we can pack together as a section $\phi:\CU_\CG\fto \Fr(E_\cG)$, where $\CU_\CG:=\sqcup_{i\in I} U_i$ and $(U_i)_{i\in I}$ is an open cover of $\CG$. This structure is the only one making the local trivialisation
$$\varphi: \CU_\CG\times \GL(l+k)\fto \Fr(E_\cG), \quad \quad (g,A)\mapsto \phi(g)\cdot A$$
a local diffeomorphism. By the first part of the proof, we can assume that $\phi$ takes values in $\Fr^\sbis(E_\cG)$. Since being an immersed submanifold is a local property, we check that $\Fr^\sbis(E_\cG)$ is smooth using the local trivialisation $\varphi$. By part 1 of Lemma \ref{prop.change.of.coordinates}, the subset $\Fr^\sbis(E_\cG)\subset \Fr(E_\cG)$ locally looks like
$$\varphi^{-1}(\Fr^\sbis(E_\cG))= \CU_\CG \,{}_{d_\phi}\!\times_{\bt_{20}} \GL(l,k)_2.$$
Since $\bt_{20}$ is a submersion, and for any $g\in \CU_\CG$ the natural inclusion $\GL(l,k)_2|_{d_{\phi(g)}}\hookrightarrow \GL(l+k)$ is a submanifold, then $\CU_\CG \,{}_{d_\phi}\!\times_{\bt_{20}} \GL(l,k)_2\hookrightarrow \CU_\CG\times\GL(l+k)$ is a submanifold and so is $\Fr^\sbis(E_\cG)\subset \Fr(E_\cG)$.

Similarly, using part 3 of Lemma \ref{prop.change.of.coordinates} and the local trivialisation, one can check that the source and target maps of $\Fr^\sbis(E_\cG)\soutar \Fr(C)\times_M \Fr(E_M)$ are submersions, and using part 4 of the same lemma, that the multiplication is smooth. We conclude that $\Fr^\sbis(E_\cG)$ is a Lie groupoid.

Last, the 2-action is smooth since, by part 2 of Lemma \ref{prop.change.of.coordinates}, under the local trivialisation $\varphi$ it corresponds to the canonical (smooth) right action of $\GL(l,k)$ on $\CU_\CG \,{}_{d_\phi}\!\times_{\bt_{20}} \GL(l,k)_2$ with moment map
\[\mu\colon \CU_\CG \,{}_{d_\phi}\!\times_{\bt_{20}} \GL(l,k)_2\fto \GL(l,k)_0, \quad \quad (g,d_{\phi(g)},M_g)\mapsto \bs_{20}(d_{\phi(g)},M_g). \qedhere
 \]
\end{proof}


\subsection{Second half of the correspondence}

Now we move to the ``inverse'' of Theorem \ref{thm_from_PB_to_VB}. Recall that, given any vector bundle $E \to M$ of rank $k$, the set $\Fr(E)$ of its frames is a principal $\GL(k)$-bundle over $M$.

\begin{thm}\label{thm_from_VB_to_PB}
Given any VB-groupoid $E_\cG \tto E_M$ of rank $(l,k)$ over $\cG \tto M$, its \frs\ frame bundle $\Fr^\sbis(E_\CG)$
(Definition \ref{def_special_frames}) defines a PB-groupoid over $\cG \tto M$ (Definition \ref{def_PB_groupoid}), with  structural Lie 2-groupoid $\GL(l,k)$.
\end{thm}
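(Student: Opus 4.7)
The plan is to assemble the groundwork established in the preceding results of this section, verifying the axioms of Definition \ref{def_PB_groupoid}. First, Theorem \ref{thm_special_frames_are_groupoid} together with Theorem \ref{thm_special_frames_are_principal_bundles} already give that $\Fr^\sbis(E_\CG) \tto \Fr(C) \times_M \Fr(E_M)$ is a Lie groupoid, while Theorem \ref{prop_2-action_on_frames} combined with Theorem \ref{thm_special_frames_are_principal_bundles} supply a free, smooth and principal $2$-action of $\GL(l,k)$ in the sense of Definition \ref{def_principal_Lie_2_groupoid_action}. The remaining ingredients to supply are therefore the surjective submersions $\Pi_\CG, \Pi_M$ and the Lie groupoid isomorphism $\varphi$ required by Definition \ref{def_PB_groupoid}.

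For the projections, I would take $\Pi_\CG$ and $\Pi_M$ to be the obvious bundle maps sending each adapted frame to its basepoint. The local smooth sections constructed in the proof of Theorem \ref{thm_special_frames_are_principal_bundles} make these surjective submersions, and the required square commutes by the explicit formulas of Theorem \ref{thm_special_frames_are_groupoid}, in which every structural map on adapted frames covers the corresponding structural map of $\CG$ by construction.

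The main step is to identify the quotient groupoid produced by Proposition \ref{prop:principal_2_action_induces_groupoid_on_quotient} with $\CG \tto M$. The key observation is that the orbits of $\GL(l,k)_2$ on $\Fr^\sbis(E_\CG)$ are exactly the fibres of $\Pi_\CG$: since the action is the restriction of the fibrewise $\GL(l+k)$-action on $\Fr(E_\CG)$, orbits sit inside fibres; conversely, part (1) of Lemma \ref{prop.change.of.coordinates} says precisely that any two \frs\ frames over the same $g \in \CG$ are related by a unique element of $\GL(l,k)_2$ sitting over the correct moment-map value $d_{\phi_g}$. An analogous argument, using only the classical transitivity of the $\GL(l) \times \GL(k)$-action on $\Fr(C)_x \times \Fr(E_M)_x$, identifies $(\Fr(C)\times_M \Fr(E_M))/\GL(l,k)_1$ with $M$. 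Hence $\Pi_\CG$ and $\Pi_M$ descend to set-theoretic bijections from the quotients, whose inverses are smooth in view of the same local sections, giving the diffeomorphisms underlying $\varphi$.

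I expect the only genuinely subtle point in this argument to be that the transitivity above respects the moment-map decomposition of $\GL(l,k)_2$; that is, the relating matrix must lie in the piece over $d_{\phi_g}$, not merely in $\GL(l+k)$. This is exactly the content of Lemma \ref{prop.change.of.coordinates}, which has already been proved. The remainder is then routine bookkeeping: using parts (2)--(4) of the same lemma one checks that $\Pi_\CG$ intertwines the structural maps of $\Fr^\sbis(E_\CG) \tto \Fr(C) \times_M \Fr(E_M)$ with those of $\CG \tto M$, so the induced bijection on quotients is automatically a Lie groupoid isomorphism, and the defining identity $\Pi_\CG = \varphi \circ \mathrm{pr}$ of Definition \ref{def_PB_groupoid} holds by construction.
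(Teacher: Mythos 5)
Your proposal is correct and takes essentially the same route as the paper, whose entire proof consists of combining Theorems \ref{thm_special_frames_are_groupoid}, \ref{prop_2-action_on_frames} and \ref{thm_special_frames_are_principal_bundles}. The additional work you carry out --- identifying the quotient $\Fr^\sbis(E_\cG)/\GL(l,k)_2 \tto (\Fr(C)\times_M\Fr(E_M))/\GL(l,k)_1$ with $\cG \tto M$ via part (1) of Lemma \ref{prop.change.of.coordinates} and the fibrewise transitivity on the base --- is precisely the detail that the paper leaves implicit in Definition \ref{def_PB_groupoid}, and your handling of it is sound.
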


\begin{proof}
It follows by combining Theorems \ref{thm_special_frames_are_groupoid}, \ref{prop_2-action_on_frames}, and \ref{thm_special_frames_are_principal_bundles}.
\end{proof}

\begin{thm}\label{thm_1-1_correspondence_VB_PB}
Let $\cG \tto M$ be a Lie groupoid; then Theorem \ref{thm_from_PB_to_VB} and Theorem \ref{thm_from_VB_to_PB} give a 1-1 correspondence between 
\begin{itemize}
 \item VB-groupoids of rank $(l,k)$ over $\cG \tto M$;
 \item PB-groupoids over $\cG \tto M$ with structural Lie 2-groupoid $\GL(l,k)$.
\end{itemize}
More precisely, any VB-groupoid $E_\cG \tto E_M$ is isomorphic to the VB-groupoid associated to the \frs\ frame bundle of $E_\cG$ and the canonical representation of $\GL(l,k)$ from Example \ref{ex_canonical_representation_of_GL}, i.e.\
\[\begin{tikzcd}
	\frac{\Fr^\sbis (E_\cG) \times_{\GL(l,k)_0} (\RR^l \times \RR^k \times \GL(l,k)_0)}{\GL(l,k)_2} & \cG \\
	\frac{(\Fr(C) \times_M \Fr(E_M)) \times_{\GL(l,k)_0} (\RR^k \times \GL(l,k)_0) }{\GL(l,k)_1} & M
	\arrow[from=1-1, to=2-1, shift left=.5ex, "\tis"]
	\arrow[from=1-1, to=2-1, shift right=.5ex, "\tit"']
	\arrow[from=1-2, to=2-2, shift left=.5ex, "s"]
	\arrow[from=1-2, to=2-2, shift right=.5ex, "t"']
	\arrow["{\pi_\cG}", from=1-1, to=1-2]
	\arrow["{\pi_M}", from=2-1, to=2-2]
\end{tikzcd}\]

Conversely, any PB-groupoid $P_\cG \tto P_M$ is isomorphic to the \frs\ frame bundle of its associated VB-groupoid (Definition \ref{def_special_frames}), i.e.\
\[\begin{tikzcd}
	\Fr^{\sbis} \left( \frac{P_\cG \times_{\GL(l,k)_0} (\RR^k \times \RR^l \times \GL(l,k)_0)}{\GL(l,k)_2} \right) & \cG \\
	\Fr \left( \frac{P_M \times_{\GL(l,k)_0} (\RR^k \times \GL(l,k)_0)}{\GL(l,k)_1} \right) \times_M \Fr \left( \frac{ P_{\cG}|_{M} \times_{\GL(l,k)_0} (\RR^l \times \GL(l,k)_0) }{\GL(l,k)_2} \right) & M
	\arrow[from=1-1, to=2-1, shift left=.5ex, "\bs"]
	\arrow[from=1-1, to=2-1, shift right=.5ex, "\bt"']
	\arrow[from=1-2, to=2-2, shift left=.5ex, "s"]
	\arrow[from=1-2, to=2-2, shift right=.5ex, "t"']
	\arrow["{\Pi_\cG}", from=1-1, to=1-2]
	\arrow["{\Pi_M}", from=2-1, to=2-2]
\end{tikzcd}\]
\end{thm}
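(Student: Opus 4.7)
The plan is to establish the 1-1 correspondence by constructing explicit natural isomorphisms in both directions and verifying that they intertwine all the structural data, with smoothness ultimately reduced to Theorem \ref{thm_special_frames_are_principal_bundles} and Lemma \ref{prop.smoothness.on.pb}.

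For the direction VB $\to$ PB $\to$ VB, starting from a VB-groupoid $E_\cG \tto E_M$, the central object is the evaluation pairing
\[
\mathrm{ev} \colon \Fr^{\sbis}(E_\cG) \times_{\GL(l,k)_0} (\RR^l \times \RR^k \times \GL(l,k)_0) \to E_\cG, \quad (\phi_g, (w, v, d)) \mapsto \phi_g(w, v),
\]
together with the analogous map on the base sending $((\varphi^c_x, \varphi^b_x), (v, d))$ to $\varphi^b_x(v) \in E_M$. The first step is to verify invariance under the fibrewise actions of $\GL(l,k)_2$ and $\GL(l,k)_1$: this is exactly the content of Lemma \ref{prop.change.of.coordinates}, since the canonical 2-representation of $\GL(l,k)$ on $\RR^{(l,k)}$ implements the matrix change-of-coordinates that compensates for reframing. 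The descended maps are fibrewise linear isomorphisms because any $\phi_g$ is a linear isomorphism onto $(E_\cG)_g$, and because the $\GL(l,k)_2$-action is free. One then verifies, using the explicit formulas from Theorem \ref{thm_special_frames_are_groupoid} and the description of the groupoid structure on the associated bundle from Theorem \ref{thm_from_PB_to_VB}, that $\mathrm{ev}$ intertwines $\tis, \tit, \tim, \tiu, \tii$ on both sides.

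For the direction PB $\to$ VB $\to$ PB, given a PB-groupoid $P_\cG \tto P_M$ with moment map $\mu$, the natural map is the frame assignment
\[
\Psi \colon P_\cG \to \Fr^{\sbis}(P_\cG[\RR^{(l,k)}]), \quad p \mapsto \phi_p,
\]
where $\phi_p(w, v) := [(p, (w, v, \mu(p)))]$. The first step is to check that $\phi_p$ is indeed an \frs\ frame of the associated VB-groupoid: condition 1 of Definition \ref{def_special_frames} follows because the core of the canonical VB-groupoid $\RR^{(l,k)}$ from Definition \ref{ex_trivial_VB_groupoid} is carried by the $\RR^l$-factor, while condition 2 is automatic from the product structure. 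Bijectivity of $\Psi$ is an immediate consequence of the principality of the $\GL(l,k)_2$-action. On the base, an element $p_M \in P_M$ is sent to the pair consisting of a frame of the associated base bundle and of a frame of the associated core, the latter using the unit $\bu(p_M) \in P_\cG|_M$ and its $\RR^l$-component.

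The main obstacle will be compatibility with the multiplication. In the first direction, one must verify $\mathrm{ev}(\bm(\phi_g, \phi_h), (w, v, \dphig)) = \tim\!\left(\phi_g(0, v), \phi_h(0, v)\right) + \tiR_h(\phi_g(w, 0))$, which is precisely how $\phi_{gh}$ is defined in Lemma \ref{prop_induced_m}, so the identity is essentially tautological once the definitions are unpacked; the symmetric calculations for $\bs, \bt, \bu, \bi$ are bookkeeping from Lemmas \ref{prop_induced_s-frame_A}, \ref{prop_induced_u}, and \ref{prop_induced_i}. In the second direction, one shows that $\Psi$ is a Lie groupoid morphism by noting that multiplication in the product $P_\cG \times_{\GL(l,k)_0} (\RR^l \times \RR^k \times \GL(l,k)_0)$ descends, via Proposition \ref{prop:principal_2_action_induces_groupoid_on_quotient}, to the multiplication on the associated bundle, and that this matches the multiplication on $\Fr^{\sbis}$ of the associated bundle via Theorem \ref{thm_special_frames_are_groupoid}. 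Finally, smoothness of both isomorphisms and their inverses follows from Theorem \ref{thm_special_frames_are_principal_bundles} (since the maps are descents of smooth equivariant maps between smooth principal bundles) and the local triviality guaranteed by Lemma \ref{prop.smoothness.on.pb}.
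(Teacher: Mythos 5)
Your proposal is correct and follows essentially the same route as the paper: both directions hinge on exactly the two canonical maps the paper uses --- the evaluation map $[\phi_g,(w,v,d)]\mapsto \phi_g(w,v)$ on the associated bundle, and the tautological frame $p\mapsto \phi_p$ with $\phi_p(w,v)=[p,(w,v,\mu(p))]$ --- which the paper's proof likewise writes down and declares ``enough to check.'' The additional detail you supply (equivariance via Lemma \ref{prop.change.of.coordinates}, the intertwining of the structure maps through Lemmas \ref{prop_induced_s-frame_A}, \ref{prop_induced_m}, \ref{prop_induced_u}, \ref{prop_induced_i}, and smoothness via Theorem \ref{thm_special_frames_are_principal_bundles} and Lemma \ref{prop.smoothness.on.pb}) is precisely the bookkeeping the paper leaves to the reader, and your base-level formula $\varphi^b_x(v)$ corrects what is evidently a typo ($\varphi^c_x(v)$) in the paper's displayed map.
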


\begin{proof}

For the first part, it is enough to check that 
 \[
 \frac{\Fr^\sbis (E_\cG) \times_{\GL(l,k)_0} (\RR^l \times \RR^k \times \GL(l,k)_0)}{\GL(l,k)_2} \to E_\cG, \quad [\phi, (v,w,g)] \mapsto \phi(v,w)
  \]
and
 \[
 \frac{(\Fr(E_M) \times \Fr(C)) \times_{\GL(l,k)_0} (\RR^k \times \GL(l,k)_0) }{\GL(l,k)_1} \to E_M, \quad [({\varphi^b_x},{\varphi^c_x}), (v,g)] \mapsto \varphi^c_x(v)
  \]
are well defined isomorphisms of vector bundles.

For the second part, it is enough to check that, for every $p \in P_{\cG}$, the map
 \[
  \phi_p: \RR^k \times \RR^l \times \GL(l,k)_0 \to \frac{P_\cG \times_{\GL(l,k)_0} (\RR^k \times \RR^l \times \GL(l,k)_0)}{\GL(l,k)_2}, \quad (v,w,g) \mapsto [p,(v,w,g)]
 \]
is an \frs\ frame, and therefore
 \[
 P_{\cG} \to \Fr^{\sbis} \left( \frac{P_\cG \times_{\GL(l,k)_0} (\RR^k \times \RR^l \times \GL(l,k)_0)}{\GL(l,k)_2} \right), \quad p \mapsto \phi_p
  \]
is an isomorphism of principal $(\GL(l,k)_2 \tto \GL(l,k)_0)$-bundles. Similarly,
\[
P_M \to \Fr \left( \frac{P_M \times_{\GL(l,k)_0} (\RR^k \times \GL(l,k)_0)}{\GL(l,k)_1} \right) \times_M \Fr \left( \frac{ P_{\cG}|_{M} \times_{\GL(l,k)_0} (\RR^l \times \GL(l,k)_0) }{\GL(l,k)_2} \right), \quad q \mapsto \bs (\phi_{\bu(q)}) = \bt (\phi_{\bu(q)})
\]
is an isomorphism of principal  $(\GL(l,k)_1 \tto \GL(l,k)_0)$-bundles.
\end{proof}

\section{Examples}\label{sec_examples}

We illustrate the correspondence between VB- and PB-groupoids by describing the \frs\ frame bundle of several classes of VB-groupoids.

\subsection{Special cases of VB-groupoids}

\begin{ex}[trivial core]\label{ex_rank_0k}
Let $E_\cG \tto E_M$ be a VB-groupoid of rank $(0,k)$, i.e.\ with core $C = 0$ and $\rank(E_\cG \to \cG) = \rank (E_M \to M) = k$ (Example \ref{ex_VB_groupoid_trivial_core}). Then its \frs\ frame bundle boils down to the Lie groupoid
\[
\Fr(E_\cG) \tto \Fr(E_M)
\]
with structure maps
\[
 \bs \colon \Fr(E_\cG) \to \Fr(E_M), \quad \bs (\phi_g) = \tis \circ \phi_g;
\]
\[
 \bt \colon \Fr(E_\cG) \to \Fr(E_M), \quad \bt (\phi_g) = \tit \circ \phi_g;
\]
\[
 \bm \colon \Fr(E_\cG) \tensor[_{\bs}]{\times}{_{\bt}} \Fr(E_\cG)\to \Fr(E_\cG), \quad \bm (\phi_g, \phi_h ) (v) := \tim \Big( \phi_g (v), \phi_h (v) \Big);
\]
\[
 \bu \colon \Fr(E_M) \to \Fr(E_\cG), \quad \bu (\varphi_x) := \tiu \circ \varphi_x;
\]
\[
 \bi \colon \Fr(E_\cG) \to \Fr(E_\cG), \quad \bi (\phi_g) := \tii \circ \phi_g. 
\]

Indeed, given a frame $\phi_g \colon \RR^{0+k} \to (E_\cG)_g$, the first condition in Definition \ref{def_special_frames} is trivially satisfied since $\phi_g|_{\RR^0 \times \{0\}} = 0$, while the second condition holds true as $\phi_g|_{\{0\} \times \RR^k} = \phi_g$, hence its image coincides with the entire $(E_\cG)_g$ since $\phi_g$ is a linear isomorphism.

{
Equivalently, since the fat groupoid $\tilde{\cG}(E_\cG)$ is isomorphic to $\cG$ (Example \ref{ex_fat_groupoid_trivial_base_core}), the action groupoid $\tilde{\cG}(E_\cG) \times_M ( \Fr(C) \times_M \Fr(E_M))$ becomes simply $\cG \times_M \Fr(E_M)$. In turn, this is an equivalent description of $\Fr(E_\cG)$ since $E_\cG \cong \cG \times_M E_M$.
}

Note also that, due to $\Hom (\RR^0, \RR^k) = \{0\}$, the structural Lie 2-groupoid $\GL(0,k)$ boils down to a Lie 2-group (Definition \ref{def:lie2grp}), simply given by the unit groupoid $\GL(k) \tto \GL(k)$.
\end{ex}

\begin{ex}[trivial base]\label{ex_rank_l0}
 Let $E_\cG \tto E_M$ be a VB-groupoid of rank $(l,0)$, i.e.\ with $E_M = 0$ and $\rank(E_\cG \to \cG) = \rank (C \to M) = l$ (Example \ref{ex_VB_groupoid_trivial_base}). Then its \frs\ frame bundle boils down to the Lie groupoid
 \[
  \Fr(E_\cG) \tto \Fr(C)
 \]
with structure maps
\[
 \bs \colon \Fr(E_\cG) \to \Fr(C), \quad \bs (\phi_g) := - \tiR_g \circ \tii \circ \phi_g;
\]
\[
 \bt \colon \Fr(E_\cG) \to \Fr(C), \quad \bt (\phi_g) = \tiR_{g^{-1}} \circ \phi_g;
\]
\[
 \bm \colon \Fr(E_\cG) \tensor[_{\bs}]{\times}{_{\bt}} \Fr(E_\cG) \to \Fr(E_\cG), \quad \bm (\phi_g, \phi_h ) (w) := \tiR_h ( \phi_g (w));
\]
\[
 \bu \colon \Fr(C) \to \Fr(E_\cG), \quad \bu (\varphi_x) := \varphi_x;
\]
\[
 \bi \colon \Fr(E_\cG) \to \Fr(E_\cG), \quad \bi (\phi_g) := - \tii \circ \phi_g.
\]

Indeed, given a frame $\phi_g \colon \RR^{l+0} \to (E_\cG)_g$, both conditions in Definition \ref{def_special_frames} are trivially satisfied since $\tis$ and $\tit$ are the zero map, hence their kernels coincide with the entire $E_\cG$.

{
Equivalently, since the fat groupoid $\tilde{\cG}(E_\cG)$ is isomorphic to $\cG$ (Example \ref{ex_fat_groupoid_trivial_base_core}), the action groupoid $\tilde{\cG}(E_\cG) \times_M ( \Fr(C) \times_M \Fr(E_M))$ becomes simply $\cG \times_M \Fr(C)$. In turn, this is an equivalent description of $\Fr(E_\cG)$ since $E_\cG \cong \cG \times_M C$.
}

Note also that, due to $\Hom (\RR^l, \RR^0) = \{0\}$, the structural Lie 2-groupoid $\GL(l,0)$ boils down to a Lie 2-group (Definition \ref{def:lie2grp}), simply given by the unit groupoid $\GL(l) \tto \GL(l)$.
\end{ex}

\begin{ex}[pullback VB-groupoid]
Let $\cG \tto M$ be a Lie groupoid and $\pi: E_M \to M$ be a vector bundle of rank $k$, and consider the pullback VB-groupoid $E_\cG := E_M \tensor[_{\pi}]{\times}{_{t}} \cG \tensor[_{s}]{\times}{_{\pi}} E_M \to \cG$, which has rank $(k,k)$ (Example \ref{ex_pullback_VB_groupoid}). Then its \frs\ frame is isomorphic to a Lie subgroupoid of the pullback groupoid $P_\cG$ of $\cG$ with respect to the principal bundle $\Fr(E_M) \times_M \Fr(E_M) \to M$, i.e.\
\[
P_\cG := \Big( \Fr(E_M) \times_M \Fr(E_M) \Big) \tensor[_{\pi}]{\times}{_{t}} \cG \tensor[_{s}]{\times}{_{\pi}} \Big( \Fr(E_M) \times_M \Fr(E_M) \Big) \tto \Fr(E_M) \times_M \Fr(E_M);
\]
more precisely,
\[
\Fr^{\sbis} (E_\cG) \cong \Big\{ \Big(\varphi^1_{t(g)}, \varphi^2_{t(g)}, g, \varphi^3_{s(g)}, \varphi^4_{s(g)} \Big) \in P_\cG | \varphi^3_{s(g)} = \varphi^4_{s(g)} \circ (\varphi^2_{t(g)})^{-1} \circ \varphi^1_{t(g)} \Big\} \subseteq P_\cG.
\]

Indeed, any frame $\phi_g$ of $E_\cG \to \cG$ at $g \in \cG$ takes the form
\[
 \phi_g = (\phi_g^1,g,\phi_g^3): \RR^{2k} \to (E_\cG)_g = (E_M)_{t(g)} \times \{g\} \times (E_M)_{s(g)}.
\]
Then the first condition from Definition \ref{def_special_frames} becomes
\[
 \phi_g^3 (w,0) = 0,
\]
which implies that $\phi^1_g (\cdot,0)$ is a frame of $E_M \to M$, while the second condition guarantees that $\phi^1_g (0,\cdot)$ and $\phi^3_g (0,\cdot)$ are frames of $E_M \to M$. Moreover, since the core-anchor $\rho: E_M \to E_M$ is the identity, the map $d_{\phi}: \RR^k \to \RR^k$ from Lemma \ref{lem_t_phi} is defined by
\[
 \phi^1_g (w,0) = \phi^1_g (0,d_{\phi}(w)) \quad \forall w \in \RR^k,
\]
i.e.\ it is the linear isomorphism
\[
 d_\phi = (\phi^1_g (0,\cdot))^{-1} \circ \phi_g^1 (\cdot,0).
\]
This shows that any \frs\ frame $\phi_g$ is encoded in the arrow $g$ together with a quadruple of frames of $E_M$, namely
\[
 \Big( \varphi^1_{t(g)}:= \phi^1_g (\cdot,0), \varphi^2_{t(g)}:= \phi^1_g (0,\cdot), \varphi^3_{s(g)}:= \phi^3_g (0, d (\cdot)), \varphi^4_{s(g)}:= \phi^3_g (0,\cdot) \Big),
\]
where $\varphi^3_{s(g)}$ is by construction the composition $\varphi^4_{s(g)} \circ d_\phi = \varphi^4_{s(g)} \circ (\varphi^2_{t(g)})^{-1} \circ \varphi^1_{t(g)}$. Moreover, the restrictions of the structure maps of the pullback groupoid $P_\cG \tto \Fr(E_M) \times_M \Fr(E_M)$ coincide with those of Theorem \ref{thm_special_frames_are_groupoid}.
\end{ex}

\begin{ex}[canonical VB-groupoid]
The fact that the frame bundle of the canonical vector space $\RR^n$ coincides with general linear group $\GL(n)$, with the principal action of $\GL(n)$ on itself, can be generalised to VB-groupoids. More precisely, we will show that the \frs\ frame bundle of the canonical VB-groupoid $\RR^{(l,k)}$ (Example \ref{ex_trivial_VB_groupoid}) coincides with the PB-groupoid associated, via Example \ref{ex_lie_2_groupoids_as_PB_groupoids}, to the general linear 2-groupoid $\GL(l,k)$ (Example \ref{ex_GL_2_groupoid}), i.e.\ with
\[\begin{tikzcd}
{\GL(l,k)_2} & \GL(l,k)_0 \\
{\GL(l,k)_1} & \GL(l,k)_0
\arrow[from=1-1, to=2-1, shift left=.5ex, "s_{21}"]
\arrow[from=1-1, to=2-1, shift right=.5ex, "t_{21}"']
\arrow[from=1-2, to=2-2, shift left=.5ex, "\id"]
\arrow[from=1-2, to=2-2, shift right=.5ex, "\id"']
\arrow["{t_{20}}", from=1-1, to=1-2]
\arrow["{t_{10}}", from=2-1, to=2-2]
\end{tikzcd}\]
with the principal action of $\GL(l,k)$ on itself.

Indeed, one checks immediately that the frame bundle of the core of $\RR^{(l,k)}$ is 
\[
 \Fr(C) = \Fr \Big(\RR^l \times \GL(l,k)_0 \to \GL(l,k)_0 \Big) \cong \GL(l) \times \GL(l,k)_0,
\]
while the frame bundle of the base of $\RR^{(l,k)}$ is
\[
 \Fr(E_M) = \Fr \Big( \RR^k \times \GL(l,k)_0 \to \GL(l,k)_0 \Big) \cong \GL(k) \times \GL(l,k)_0,
\]
so that their fibred product is
\[
   \Fr(C) \times_{\GL(l,k)_0} \Fr(E_M) \cong \GL(l,k)_0 \times \GL(l) \times \GL(k) = \GL(l,k)_1.
\]

On the other hand, the frame bundle of the trivial vector bundle $\RR^{(l,k)}_2 = \RR^l \times \RR^k \times \GL(l,k)_0 \to \GL(l,k)_0$ is the trivial bundle principal bundle
\[
 \GL(l+k) \times \GL(l,k)_0 \to \GL(l,k)_0.
\]
To conclude that $\Fr^\sbis (\RR_2^{(l,k)}) = \GL(l,k)_2$, it is enough to check that the conditions of Definition \ref{def_special_frames} coincide with those defining $\GL(l,k)_2 \subseteq \GL(l+k) \times \GL(l,k)_0$ in Example \ref{ex_GL_2_groupoid}. To this end, consider an element $g \in \cG = \GL(l,k)_0$ and a frame $\phi_g \colon \RR^l \times \RR^k \to (\RR_2^{(l,k)})_g = \RR^l \times \RR^k \times \{g\}$, which is identified with a block matrix in $\GL(l+k)$, i.e.\
\[
 \phi_g (w,v) = \left(\begin{matrix}
			M_1 & M_2 \\
			M_3 & M_4
		\end{matrix}\right) \left(\begin{matrix}
			w \\
			v
		\end{matrix}\right) = 
		\left(\begin{matrix}
			M_1 w + M_2 v \\
			M_3 w+  M_4 v
		\end{matrix}\right).
\]
Then condition 1 of Definition \ref{def_special_frames} is equivalent to $M_3 = 0$, since $\ker(\tis)_g = \RR^l \times \{0\} \times \{g\}$. Similarly, from condition 2 of Definition \ref{def_special_frames} it follows that $M_4$ is invertible (hence $M_1$ is invertible as well, by the properties of block matrices). Setting $J := M_2 (M_4)^{-1} \colon \RR^k \to \RR^l$, one can check the rest of the conditions defining $\GL(l,k)_2$ via computations completely analogous to those in part 3 of Lemma \ref{prop.change.of.coordinates}.
\end{ex}

\subsection{Tangent VB-groupoids}

\begin{ex}[tangent VB-groupoids]\label{ex_PB_of_tangent_groupoid}
Let $\cG \tto M$ be a Lie groupoid and consider its tangent VB-groupoid $E_\cG = T\cG \tto E_M = TM$, whose core $C$ coincides with the Lie algebroid $A = \mathrm{Lie}(\cG)$ (Example \ref{ex_tangent_VB_groupoid}). Then the Lie groupoid structure of the \frs\ frame bundle $\Fr^\sbis (\cG) \tto \Fr(A) \times_M \Fr(M)$ admits a simpler description as
\[
 \bs \colon \Fr(\cG)^{\rm \sbis} \to \Fr(A) \times_M \Fr(M), \quad \bs (\phi_g) (w,v) = \Big( d_{g^{-1}}R_g ( d\tau (\phi_g (-w,\dphig( w ))) )
 , d_gs (\phi_g (0,v) ) \Big);
\]
\[
 \bt \colon \Fr(\cG)^{\rm \sbis} \to \Fr(A) \times_M \Fr(M), \quad \bt (\phi_g) (w,v) = \Big(d_g R_{g^{-1}} ( \phi_g (w,0) ), d_gt (\phi_g (0,v) ) \Big);
\]
\[
 \bm \colon \Fr(\cG)^{\rm \sbis} \tensor[_{\bs}]{\times}{_{\bt}} \Fr(\cG)^{\rm \sbis}\to \Fr(\cG)^{\rm \sbis}, \quad \bm (\phi_g, \phi_h ) (w,v) := d_{(g,h)}m \Big( \phi_g (0,v), \phi_h (0,v) \Big) + d_g R_h ( \phi_g (w,0));
\]
\[
 \bu \colon \Fr(A) \times_M \Fr(M) \to \Fr(\cG)^{\rm \sbis}, \quad \bu (\varphi_x^a, \varphi^b_x) (w,v) := d_x u (\varphi^b_x ( v ) ) + \varphi^a_x (w);
\]
\[
 \bi \colon \Fr(\cG)^{\rm \sbis} \to \Fr(\cG)^{\rm \sbis}, \quad \bi (\phi_g) (w,v):= d_g \tau (\phi_g (-w,v + \dphig(w)) ).
\]

{
Equivalently, since $\tilde{\cG}(T\cG)$ coincides with the jet groupoid $J^1 \cG$, i.e.\ the space of 1-jets of bisections of $\cG$, one can interpret $\Fr(\cG)^{\rm \sbis}$ as the action groupoid $J^1\cG \times_M (\Fr(A) \times_M \Fr(M))$, for the canonical representations of $J^1\cG$ on $A$ and $TM$.
}

One can also write down the structure maps of the \frs\ frame bundle of the cotangent VB-groupoid $T^* \cG$ (Example \ref{ex_cotangent_VB_groupoid}) and describe a canonical isomorphism of PB-groupoids $\Fr^\sbis (T\cG) \to \Fr^\sbis (T^*\cG)$ over $\Fr(A) \times_M \Fr(TM) \to \Fr(T^*M) \times_M \Fr(A^*)$. This will be discussed in greater generality in the next section.
\end{ex}

We consider now a few special cases of Example \ref{ex_PB_of_tangent_groupoid}.

\begin{ex}[pair groupoids]
Let $\cG = M \times M \tto M$ be the pair groupoid of an $n$-dimensional manifold $M$, and consider its tangent VB-groupoid $T\cG = TM \times TM \tto TM$, which is the pair groupoid of $TM$ and has rank $(n,n)$. A frame $\phi_{(x,y)}$ of $T\cG$ at $g = (x,y) \in M \times M$ takes the form
\[
 \phi_{(x,y)} \colon \RR^{2n} \to T_x M \times T_y M, \quad (w,v) \mapsto (\phi_x (w,v), \phi_y (w,v) ).
\]
Then the first condition from Definition \ref{def_special_frames} becomes
\[
\phi_y (w,0) = 0 \quad \text{for every } w \in \RR^n,
\]
which implies that $\phi_y (0,\cdot)$ is a frame of $M$, while the second condition guarantees that $\phi_x (\cdot, 0)$ and $\phi_x (0,\cdot)$ are frames of $M$. Last, since the core-anchor of $TM \times TM \tto TM$ is the anchor of the Lie algebroid $TM \to M$, i.e.\ the identity $\id_{TM}$, the map $\dphi \colon \RR^n \to \RR^n$ from Lemma \ref{lem_t_phi} is a linear isomorphism defined by
\[
 \phi_x (w,0) = \phi_x (0, \dphi(w)).
\]
Using the definition of the structure maps of the pair groupoid, together with the fact that
\[
 d_{(x,y)} R_{(y,z)} (\alpha_x, 0_y) = (\alpha_x, 0_z),
\]
one can see that the groupoid structure of $\Fr(M \times M)^{\rm \sbis} \tto  \Fr(M) \times_M \Fr(M)$ is given by
\[
\bs (\phi_{(x,y)}) = \Big( \phi_y (0, \dphig( \cdot )), \phi_y (0,\cdot)  \Big), \quad \quad \bt (\phi_{(x,y)}) = \Big( \phi_x (\cdot,0), \phi_x (0,\cdot) \Big);
\]
\[
 \bm (\phi^1_{(x,y)}, \phi^2_{(y,z)} ) (w,v) := \Big( \phi_x^1 (0,v), \phi_z^2 (0,v) \Big) + \Big( \phi_x^1 (w,0), 0_z \Big);
\]
\[
\bu (\varphi_x^a, \varphi^b_x) (w,v) := (\varphi^b_x ( v ), \varphi^b_x ( v ) ) + (\varphi^a_x (w), 0_x), \quad \quad \bi (\phi_g) (w,v):= \Big( \phi_y (0, v + \dphig(w)), \phi_x (0, v) \Big). \qedhere
\]
\end{ex}

\begin{ex}[\'etale Lie groupoids]
Let $\cG \tto M$ be an \'etale Lie groupoid, with $\dim(\cG) = \dim(M) = k$, and consider its tangent VB-groupoid $E_\cG = T\cG \tto E_M = TM$, which has rank $(0,k)$ and core $\mathrm{Lie}(\cG) = 0$. As per Example \ref{ex_rank_0k}, both conditions in Definition \ref{def_special_frames} are empty, therefore
\[
 \Fr(\cG)^{\rm \sbis} = \Fr(\cG),
\]
and the groupoid structure of $\Fr(\cG) \tto \Fr(M)$ is given by
\[
 \bs (\phi_g) = d_g s \circ \phi_g, \quad \quad \bt (\phi_g) = d_g t \circ \phi_g;
\]
\[
 \bm (\phi_g, \phi_h ) (v) := d_{(g,h)} m \Big( \phi_g (v), \phi_h (v) \Big);
\]
\[
 \bu (\varphi_x) := d_xu \circ \varphi_x, \quad \quad \bi (\phi_g) := d_g \tau \circ \phi_g. 
\]
A particular instance of this example is given by the unit groupoid $\cG = M \tto M$: since $s = t = \id_M$, the formulae above show that its \frs\ frame bundle is precisely the unit groupoid of $\Fr(M)$, i.e.\ its multiplication reduces to
\[
  \bm (\phi_g, \phi_h ) = \phi_g = \phi_h. \qedhere
\]
\end{ex}

\begin{ex}[Lie groups]
Let $G \tto M=\{*\}$ be an $l$-dimensional Lie group and consider its tangent VB-groupoid $E_{\cG} = TG \tto E_M = \{*\}$, which has rank $(l,0)$ and core $\g = \mathrm{Lie}(G)$. As per Example \ref{ex_rank_l0}, both conditions in Definition \ref{def_special_frames} are empty, therefore
\[
 \Fr(G)^{\rm \sbis} = \Fr(G),
\]
and the groupoid structure of $\Fr(G) \tto \Fr(\g)$ is given by
\[
 \bs (\phi_g) := - d_{g^{-1}}R_g \circ d_g \tau \circ \phi_g = d_g L_{g^{-1}} \circ \phi_g, \quad \quad \bt (\phi_g) = d_g R_{g^{-1}} \circ \phi_g;
\]
\[
 \bm (\phi_g, \phi_h ) (w) := d_g R_h ( \phi_g (w));
\]
\[
\bu (\varphi_x) := \varphi_x, \quad \quad \bi (\phi_g) := - d_g \tau \circ \phi_g. \qedhere
\]
\end{ex}

\begin{ex}[bundles of Lie groups]
Let $\pi: \cG \to M$ be a bundle of Lie groups, i.e.\ a Lie groupoid where source and target coincide, and consider its tangent VB-groupoid $d\pi: T\cG \to TM$, which is a bundle of Lie groups as well. Then
\[
 \Fr^\sbis (\cG) = \{ \phi_g \in \Fr(\cG) | \phi_g (w,0) \in \ker(d_g \pi) \ \forall w \in \RR^l \},
\]
since the first condition in Definition \ref{def_special_frames} implies the second one by Lemma \ref{lemma_automatic_s_section} and using $s = t = \pi$.

Moreover, the map $\dphi$ from Lemma \ref{lem_t_phi} is zero. Indeed, the element $\dphi (w) \in \RR^k$ is defined by
\[
d\pi (\phi_g (0,\dphi(w)) ) = d\pi (\phi_g (w, 0) ) = 0,
\]
i.e.\ by $\phi_g (0,\dphi(w)) \in \ker(d\pi)$. However, $\phi_g (0,\dphi(w))$ belongs also to $\Ima(\phi_g |_{\{0\} \times \RR^k})$, therefore it vanishes due to Lemma \ref{lemma_equivalent_def_bisection_frames}. Since $\phi_g$ is an isomorphism, it follows that $\dphi(w) = 0$. Then the groupoid structure of $\Fr(\cG)^{\rm \sbis} \tto \Fr(A) \times \Fr(M)$ can be written as 
\[
\bs (\phi_g) (w,v) = \Big( d_{g^{-1}}R_g ( d\tau (\phi_g (-w,0)) )
 , d_g\pi (\phi_g (0,v) ) \Big);
\]
\[
\bt (\phi_g) (w,v) = \Big(  d_gR_{g^{-1}} ( \phi_g (w,0) ),  d_g\pi (\phi_g (0,v) ) \Big);
\]
\[
 \bm (\phi_g, \phi_h ) (w,v) := d_{(g,h)}m \Big( \phi_g (0,v), \phi_h (0,v) \Big) + d_gR_h ( \phi_g (w,0));
\]
\[
\bu (\varphi_x^a, \varphi^b_x) (w,v) := d_x u (\varphi^b_x ( v ) ) + \varphi^a_x (w), \quad \quad \bi (\phi_g) (w,v):= d_g \tau (\phi_g (-w,v) ). \qedhere
\]
\end{ex}

\subsection{PB-groupoid of the dual VB-groupoid}


Recall that the fat groupoid of any VB-groupoid $E_\cG \tto E_M$ is canonically isomorphic to the fat groupoid of its dual VB-groupoid $E^*_\cG \tto C^*$ (Example \ref{ex_dual_VB_groupoid}); the isomorphism is given by
\[
\widetilde{\cG} (E_\cG) \to \widetilde{\cG} (E^*_\cG), \quad (g,H_g) \to (g,H^\circ_g),
\]
where $H^\circ_g \subseteq (E^*_\cG)_g$ is the annihilator of $H_g \subseteq (E_\cG)_g$. Then one has the following chain of isomorphisms of Lie groupoids:
\[
 \Fr^\sbis(E^*_\cG) \cong \widetilde{\cG} (E^*_\cG) \times_M (\Fr(E^*_M) \times_M \Fr(C^*)) \cong \widetilde{\cG} (E_\cG) \times_M (\Fr(C) \times_M \Fr(E_M)) \cong \Fr^\sbis (E_\cG).
\]
The goal of this subsection is to describe the isomorphism $\Fr^\sbis(E^*_\cG) \cong \Fr^\sbis (E_\cG)$ as a PB-groupoid isomorpism and provide an explicit formula (see Propositions \ref{prop_isomorphism_s_t_bisection_frames} and \ref{prop_PB_groupoid_dual_VB_groupoid} below) starting from the definition of \frs\ frame bundle of a VB-groupoid.
To this end, as anticipated, we will have to consider a version of Definition \ref{def_special_frames} obtained by switching $\tis$ and $\tit$ and replacing $\RR^l \times \RR^k$ with $\RR^k \times \RR^l$. This will yield a PB-groupoid structure which is isomorphic to the one described in Theorems \ref{thm_special_frames_are_groupoid} and
\ref{thm_special_frames_are_principal_bundles}, and which will work as a bridge between the \frs\ and the \frt\ frame bundles.

\begin{defn}\label{def_special_frames_t}
 Let $E_\cG \tto E_M$ be a VB-groupoid of rank $(l,k)$ over $\cG \tto M$. A frame $\phi_g \in \Fr(E_{\cG})$ is called a {\bf \frt\ frame} if the following two conditions holds:
 \begin{enumerate}
 \item $\phi_g|_{\{0\} \times \RR^l}$ takes values in $\ker(\tit_g)$;
 \item the image $H_g \subseteq (E_\cG)_g$ of $\phi_g|_{\{0\} \times \RR^k}$ is an element of the fat groupoid $\tilde{\cG}(E_\cG)$ (Definition \ref{def_fat_groupoid}).
 \end{enumerate}
 The collection
 \[
 \Fr(E_\cG)^\tbis := \{ \phi_g \colon \RR^{l+k} \to (E_\cG)_g \mid \phi_g \text{ \frt\ frame} \} \subseteq \Fr(E_\cG).
\]
of \frt\ frames of $E_\cG$ is called the \textbf{\frt\ frame bundle} of $E_\cG$.
 \end{defn}

\begin{prop}\label{prop_t_bis_frame_is_PB_groupoid}
 Given a VB-groupoid $E_\cG \tto E_M$ of rank $(l,k)$ over a Lie groupoid $\CG\soutar M$, its \frt\ frame bundle $\Fr(E_\cG)^\tbis$ (Definition \ref{def_special_frames_t}) is a Lie groupoid over $\Fr(E_M) \times_M \Fr(C)$, where $C \to M$ is the core of $E_\cG \tto E_M$ (Definition \ref{def_core}), with structure maps induced by those of $E_\cG \tto E_M$.
 
 Moreover, $\Fr(E_\cG)^\tbis$ is a (\textit{left}) PB-groupoid over $\cG \tto M$ with structural Lie 2-groupoid $\GL(l,k)$ (Example \ref{ex_GL_2_groupoid}), with principal left 2-action given by
 \[
  A \cdot \phi_g := \phi_g \circ T \circ A^T \circ T,
 \]
where $T$ is the map which flips the first and second group of coordinates in $\RR^{l+k}$:
\[
 T: \RR^l \times \RR^k \to \RR^k \times \RR^l, \quad (w_1,\ldots,w_l,v_1,\ldots,v_k) \mapsto (v_k,\ldots,v_1, w_l,\ldots,w_1).
\]
\end{prop}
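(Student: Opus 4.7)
The proof follows by mirroring the entire chain of results in Section 4 for $s$-bisection frames, under the swap $\tis \leftrightarrow \tit$ together with the replacement of the block decomposition $\RR^{l+k} = \RR^l \times \RR^k$ (used for $\Fr^\sbis$) by $\RR^{l+k} = \RR^k \times \RR^l$ (used for $\Fr^\tbis$). First I would record the analogues of Lemmas \ref{lemma_equivalent_def_bisection_frames}, \ref{lemma_equivalent_def_frame_taking_values_in_kers}, and \ref{lemma_automatic_s_section}, all of which follow from the same arguments after making the swap; in particular, any frame $\phi_g$ with $\phi_g|_{\{0\} \times \RR^l} \subseteq \ker(\tit_g)$ automatically satisfies the condition that $\phi_g|_{\RR^k \times \{0\}}$ is a $t$-section frame.

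Next, I would construct the Lie groupoid structure on $\Fr(E_\cG)^\tbis \tto \Fr(E_M) \times_M \Fr(C)$ following the blueprint of Theorem \ref{thm_special_frames_are_groupoid}: the source map projects $\phi_g$ to the pair of frames naturally associated to $\tit$, while the target map uses $\tis$, and the formulas for multiplication, unit, and inverse mirror the originals with the obvious replacements. Verification of the groupoid axioms proceeds exactly as in the proof of Theorem \ref{thm_special_frames_are_groupoid}, and the analogues of the technical lemmas of Section 4.3 transfer verbatim. Smoothness of $\Fr(E_\cG)^\tbis$ as a submanifold of $\Fr(E_\cG)$ follows from the analogue of Theorem \ref{thm_special_frames_are_principal_bundles}, using the same algorithm for constructing local sections with $\tis$ and $\tit$ interchanged.

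For the PB-groupoid structure with a left $\GL(l,k)$-action: the canonical right action of $\GL(l+k)$ on $\Fr(E_\cG)$ restricts to a right action on $\Fr(E_\cG)^\tbis$ by an appropriate block triangular subgroup $H \subseteq \GL(l+k)$, which can be identified with $\GL(l,k)$ as a Lie 2-groupoid via a map $\iota\colon \GL(l,k) \to H$ built from the flip $T$ and the transpose. Using $T^2 = \id$ and $(AB)^T = B^T A^T$, one checks that $\iota$ is an antihomomorphism, so the formula $A \cdot \phi_g := \phi_g \circ \iota(A) = \phi_g \circ T \circ A^T \circ T$ defines a left action of $\GL(l,k)$ on $\Fr(E_\cG)^\tbis$ satisfying $(AB) \cdot \phi_g = A \cdot (B \cdot \phi_g)$. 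Freeness, properness, and compatibility with $\Pi_\cG$ then follow from the corresponding statements for $s$-bisection frames transported across $\iota$; by construction, the quotient recovers $\cG \tto M$.

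The main technical obstacle is verifying that $\iota$ respects all three groupoid structures defining the Lie 2-groupoid $\GL(l,k)$, together with the compatibility between the transposition/flip operations and the block matrix description from Definition \ref{ex_GL_2_groupoid}. The necessary bookkeeping mirrors (and is controlled by) the analogue of Lemma \ref{prop.change.of.coordinates} translated to the $\tbis$-setting.
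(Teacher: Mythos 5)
Your proposal reconstructs the paper's own proof essentially step for step: the paper likewise declares the groupoid and smoothness parts ``formally identical'' to Theorems \ref{thm_special_frames_are_groupoid} and \ref{thm_special_frames_are_principal_bundles} under the swap $\tis \leftrightarrow \tit$, and likewise justifies the left action by the antihomomorphism property $(AB)^T = B^T A^T$, the involutivity of $T$, and the claim that $T \circ A^T \circ T$ preserves the structure defining \frt\ frames.

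However, that last claim --- in your formulation, that $\iota(A) := T \circ A^T \circ T$ takes values in the subgroup $H \subseteq \GL(l+k)$ of matrices that preserve $\Fr(E_\cG)^\tbis$ under right composition --- is false as stated, and it is exactly where the real content sits; the same defect appears in the paper's one-line proof, which asserts that $T \circ A^T \circ T$ sends $\{0\}\times\RR^l$ to $\{0\}\times\RR^l$ and $\RR^k\times\{0\}$ to $\RR^k\times\{0\}$. Concretely, for $l=k=1$ and $A = \begin{pmatrix} a & jb \\ 0 & b \end{pmatrix} \in \GL(1,1)_2$ with $j \neq 0$, one computes $T A^T T = \begin{pmatrix} b & jb \\ 0 & a \end{pmatrix}$ as a matrix on $\RR^k \times \RR^l$, which maps $(0,w)$ to $(jbw,aw) \notin \{0\}\times\RR^l$. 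Hence $(A \cdot \phi_g)(0,w) = \phi_g(jbw,0) + \phi_g(0,aw)$ has a nonzero component in the image of $\phi_g|_{\RR^k\times\{0\}}$, which by condition 2 of Definition \ref{def_special_frames_t} is transverse to $\ker(\tit_g)$, so $A \cdot \phi_g$ violates condition 1 of that definition. Indeed, preserving condition 1 forces the acting matrix to preserve $\{0\}\times\RR^l = \phi_g^{-1}(\ker \tit_g)$, i.e.\ to be block \emph{lower} triangular with respect to $\RR^k\times\RR^l$, whereas $T A^T T$ is block \emph{upper} triangular whenever $A \in \GL(l,k)_2$: your $\iota$ is an antihomomorphism, but it does not land in $H$. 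The formula becomes consistent only when the acting elements are themselves transposes of elements of $\GL(l,k)_2$ (so that the induced right composition is by $T(A^T)^T T = TAT$, which is lower triangular), which is how it is implicitly used in the equivariance computation of Proposition \ref{prop_isomorphism_s_t_bisection_frames}; equivalently, one should define the left action by transporting the right action on $\Fr^\sbis(E_\cG)$ through the isomorphism $\psi$ of that proposition. Note finally that, even after this repair, preservation of condition 2 of Definition \ref{def_special_frames_t} is not a block-preservation statement at all: exactly as in the \frs\ case, it requires the moment map conditions (invertibility of $I_k + dJ$) via the analogue of Lemma \ref{prop.change.of.coordinates}, so the ``bookkeeping'' your last paragraph defers to is where the proof actually lives. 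Both defects are inherited from the paper's own argument, so your reconstruction is faithful; but neither text, as written, closes the key verification.
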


\begin{proof}
The proof is formally identical to those of Theorems \ref{thm_special_frames_are_groupoid} and
\ref{thm_special_frames_are_principal_bundles}. In particular, note that, if $A \in \GL(l,k)_2$, then $T \circ A^T \circ T$ sends $\{0\} \times \RR^l$ to $\{0\} \times \RR^l$ and $\RR^k \times \{0\}$ to $\RR^k \times \{0\}$, hence it preserves the definition of \frt\ frame. The left action is therefore well-defined since $(AB)^T = B^T A^T$ and $T$ is an involution.
\end{proof}

Given any VB-groupoid, the PB-groupoid structure of its \frs\ frame bundle and that of its \frt\ frame bundle are actually the same, up to the inversion $\tii$ of $E_\cG$ and the flip $T$.

\begin{prop}\label{prop_isomorphism_s_t_bisection_frames}
 Let $E_\cG \tto E_M$ be a VB-groupoid of rank $(l,k)$ over a Lie groupoid $\CG\soutar M$. Then there is a canonical PB-groupoid isomorphism
  \[
 \psi \colon \Fr^\sbis (E_\cG) \to \Fr^\tbis (E_\cG), \quad \phi_g \mapsto \tii \circ \phi_g \circ T,
 \]
over the diffeomorphism
\[
 \Fr(C) \times_M \Fr(E_M) \to \Fr(E_M) \times_M \Fr(C), \quad (\varphi^c,\varphi^b) \mapsto (\varphi^b,\varphi^c),
\]
and with respect to the Lie 2-groupoid isomorphism
\[
 \GL(l,k) \to \GL(l,k), \quad A \mapsto A^T.
\]
\end{prop}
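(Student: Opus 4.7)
My approach is to check four things: well-definedness of $\psi$, that it is a diffeomorphism, compatibility with the groupoid structures, and $\GL(l,k)$-equivariance. The first step is to verify that $\psi(\phi_g) = \tii\circ\phi_g\circ T$ is a \frt\ frame at $g^{-1}$. Since $\tis\circ\tii = \tit$, the inversion $\tii$ sends $\ker(\tis_g)$ bijectively onto $\ker(\tit_{g^{-1}})$, and it sends bisection frames to bisection frames (the two transversality conditions of Lemma \ref{lemma_equivalent_def_bisection_frames} simply exchange). Combined with the fact that $T$ sends $\{0\}\times\RR^l$ to $\RR^l\times\{0\}$ and $\RR^k\times\{0\}$ to $\{0\}\times\RR^k$, the two conditions of Definition \ref{def_special_frames_t} follow immediately from those of Definition \ref{def_special_frames}. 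Since $\tii^2=\id$ and $T^2=\id$, $\psi$ is a diffeomorphism with smooth inverse of the same form. Moreover, because $\psi(\phi_g)$ lies over $g^{-1}$, its source in $\Fr^\tbis$ lies over $t(g)$, so $\psi$ covers the groupoid inversion $\tau\colon \cG\to\cG$, and on the base manifolds of the two frame groupoids it induces the swap $(\varphi^c,\varphi^b)\mapsto(\varphi^b,\varphi^c)$.

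\textbf{Groupoid compatibility.} Using the formulas of Theorem \ref{thm_special_frames_are_groupoid} and the analogous \frt\ formulas from Proposition \ref{prop_t_bis_frame_is_PB_groupoid}, a direct but somewhat intricate calculation shows that $\bs_\tbis\circ\psi$ equals the swap composed with $\bt_\sbis$, and $\bt_\tbis\circ\psi$ equals the swap composed with $\bs_\sbis$; here one must unravel carefully how the linear map $\dphi$ from Lemma \ref{lem_t_phi} transforms. For the multiplication, the key input is the anti-multiplicativity of the inversion, $\tii(\tim(e_1,e_2))=\tim(\tii(e_2),\tii(e_1))$, which together with the analogous identities for $\tiu$, $\tis$, $\tit$ forces
\[
\psi(\bm_\sbis(\phi_g,\phi_h)) = \bm_\tbis(\psi(\phi_h),\psi(\phi_g)).
\]
The order reversal is precisely what is required of a groupoid morphism covering $\tau$. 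This multiplicativity check, which amounts to matching the intricate formula for $\bm_\sbis$ with its \frt\ analogue through the simultaneous action of $\tii$ and $T$, is the main technical obstacle in the proof.

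\textbf{Equivariance and the automorphism $A\mapsto A^T$.} Equivariance is a one-line check using $T^2=\id$:
\[
A^T\cdot\psi(\phi_g) = \psi(\phi_g)\circ T\circ (A^T)^T\circ T = \tii\circ\phi_g\circ T\circ T\circ A\circ T = \tii\circ\phi_g\circ A\circ T = \psi(\phi_g\cdot A).
\]
To conclude, one verifies that $A\mapsto A^T$, interpreted together with the conjugation by $T$ implicit in the left-action convention of Proposition \ref{prop_t_bis_frame_is_PB_groupoid}, preserves the block-upper-triangular form and the invertibility conditions $(I+Jd)\in\GL(l)$, $(I+dJ)\in\GL(k)$ defining $\GL(l,k)_2$ in Definition \ref{ex_GL_2_groupoid}, and intertwines all its source, target and multiplication maps. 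Each of these reduces to a direct matrix computation.
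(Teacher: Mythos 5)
Your first and third steps coincide with the paper's own proof: the well-definedness check (the flip $T$ matches the restrictions of Definition \ref{def_special_frames} with those of Definition \ref{def_special_frames_t}, while $\tii$ carries $\ker(\tis)$ into $\ker(\tit)$) and the equivariance computation are exactly what the paper does, the latter verbatim. Your observation that $\psi(\phi_g)$ lives over $g^{-1}$, so that $\psi$ covers the inversion of $\cG$, is correct and is in fact more explicit than anything the paper says.

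The gap is in your second step. You claim that $\psi$ exchanges source with target and reverses multiplication,
\[
\psi(\bm_{\sbis}(\phi_g,\phi_h)) = \bm_{\tbis}(\psi(\phi_h),\psi(\phi_g)),
\]
and that this is ``precisely what is required of a groupoid morphism covering $\tau$''. There is no such notion: a bijection swapping $\bs$ with $\bt$ and reversing products is an \emph{anti}-isomorphism, so if your formulas were correct you would have proved a statement different from the proposition, which asserts an isomorphism. Moreover, with the groupoid structure that Proposition \ref{prop_t_bis_frame_is_PB_groupoid} actually puts on $\Fr^{\tbis}(E_\cG)$ --- the formal $\tis\leftrightarrow\tit$ mirror of Theorem \ref{thm_special_frames_are_groupoid}, in which a \frt\ frame at $g$ has source lying over $t(g)$, target over $s(g)$, and two \frt\ frames at $g$ and $h$ are composable when $t(g)=s(h)$ with product at $hg$ --- your claimed identity is not even well-formed: since $s(g)=t(h)$, the pair $(\psi(\phi_h),\psi(\phi_g))$ is in general not composable, whereas $(\psi(\phi_g),\psi(\phi_h))$ is, with product sitting at $h^{-1}g^{-1}=(gh)^{-1}$, exactly where $\psi(\phi_{gh})$ sits. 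The correct verification, using the anti-multiplicativity $\tii(\tim(e_1,e_2))=\tim(\tii(e_2),\tii(e_1))$ together with $\tii(\tiR_h(e))=\tim(0_{h^{-1}},\tii(e))$, yields the \emph{same-order} identity
\[
\psi(\bm_{\sbis}(\phi_g,\phi_h)) = \bm_{\tbis}(\psi(\phi_g),\psi(\phi_h)),
\]
and likewise sources go to sources and targets to targets (over the swap of the two factors of the base). In other words, the order reversal you expected is already absorbed into the structure maps of the \frt\ frame bundle --- that is precisely why $\psi$ is an isomorphism rather than an anti-isomorphism. Your step 2 therefore needs to be redone against the correct target structure; once that is fixed, the remaining parts of your argument go through.
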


\begin{proof}
The flip $T$ guarantees that the restrictions of a frame $\phi_g$ in Definition \ref{def_special_frames} are mapped precisely to the restrictions in Definition \ref{def_special_frames_t}, while the inversion $\tii$ transforms $\ker(\tis)$ into $\ker(\tit)$, so that \frs\ frames are sent to \frt\ frames. This defines an isomorphism of Lie groupoids, with the inverse given by the same map $\phi_g \mapsto \tii \circ \phi_g \circ T$, since both $\tii$ and $T$ are involutions.

Moreover, $\psi$ is equivariant with respect to the right and the left principal 2- action of $\GL(l,k)$ and the isomorphism $A \mapsto A^T$, since
\begin{eqnarray*}
 A^T \cdot \psi(\phi) &=& \psi(\phi) \circ T \circ (A^T)^T \circ T \\
 &=& \tii \circ \phi \circ T \circ T \circ A \circ T \\
 &=& \tii \circ \phi \circ A \circ T \\
 &=& \tii \circ (\phi \cdot A) \circ T \\
 &=& \psi (\phi \cdot A).
\end{eqnarray*}
%
\end{proof}

Recall that, given a frame $\phi\colon\KR^{l+k}\fto V$ on a vector space $V$, its {\bf dual frame} is the isomorphism $\phi^*\colon\KR^{l+k}\fto V^*$  satisfying
$$\phi^*{(e_i)}(\phi(e_j))=\delta_{ij},$$
where $\{e_i\}$ is the canonical basis of $\KR^{k+l}$. Accordingly, for every vector bundle $E \to M$, there is a diffeomorphism (actually an isomorphism of principal bundles)
\[
 \Fr(E) \to \Fr(E^*), \quad \phi \mapsto \phi^*.
\]

\begin{prop}\label{prop_PB_groupoid_dual_VB_groupoid}
 Let $E_\cG \tto E_M$ be a VB-groupoid of rank $(l,k)$ over a Lie groupoid $\CG\soutar M$, and $E_\cG^* \tto C^*$ its dual VB-groupoid (Example \ref{ex_dual_VB_groupoid}). Then there is a canonical PB-groupoid isomorphism
  \[
 \Phi \colon \Fr^\sbis (E_\cG) \to \Fr^\tbis (E_\cG^*), \quad \phi_g \mapsto \phi_g^*,
 \]
over the diffeomorphism
\[
\Fr(C) \times_M \Fr(E_M) \to \Fr(C^*) \times_M \Fr(E_M^*), \quad (\varphi^c,\varphi^b) \mapsto ((\varphi^c)^*, (\varphi^b)^*),
\]
and with respect to the Lie 2-groupoid isomorphism. 
\[
 \GL(l,k) \to \GL(l,k), \quad A \mapsto A^T.
\]
\end{prop}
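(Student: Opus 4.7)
The plan is to exploit the fact that Definition-Example~\ref{ex_dual_VB_groupoid} defines the source, target, and multiplication on $E_\cG^*$ as the algebraic duals of those on $E_\cG$; consequently the dualization $\phi_g \mapsto \phi_g^*$ of frames should automatically intertwine the groupoid and action structures of $\Fr^\sbis(E_\cG)$ and $\Fr^\tbis(E_\cG^*)$, up to the systematic swap $(\text{core} \leftrightarrow \text{base}, \tis \leftrightarrow \tit)$ and transposition on the structural $2$-groupoid.

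My first step would be to assemble a duality dictionary. From the formulas defining $\tit^*$ and $\tis^*$ in Example~\ref{ex_dual_VB_groupoid}, together with the identifications $\ker(\tis_g) = \tiR_g(C_{t(g)})$ and the analogous description of $\ker(\tit_g)$ as the left core at $g$, I would derive the annihilator identities
\[
\ker(\tit^*_g) = \ker(\tis_g)^\circ, \qquad \ker(\tis^*_g) = \ker(\tit_g)^\circ
\]
inside $(E_\cG^*)_g$. Analogous dualities follow for the right translation, unit, and inverse of $E_\cG^*$, together with the identification of its core with $E_M^*$ and its core-anchor with the dual of that of $E_\cG$.

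The second step is to check that $\Phi$ is well defined: writing $\phi_g$ as an adapted basis $(e_1, \ldots, e_l, f_1, \ldots, f_k)$ with $\mathrm{span}(e_i) = \ker(\tis_g)$, the dual basis $(e_1^*, \ldots, e_l^*, f_1^*, \ldots, f_k^*)$, viewed as a frame $\RR^l \times \RR^k \to (E_\cG^*)_g$, will satisfy the \frt-conditions for the rank-$(k,l)$ VB-groupoid $E_\cG^*$ --- the membership $f_j^* \in \ker(\tit^*_g)$ is immediate from $f_j^*(e_i) = 0$ and the dictionary, while the bisection-frame condition on $\phi_g^*|_{\RR^l \times \{0\}}$ translates, again through annihilator duality, into condition 2 of Definition~\ref{def_special_frames} on $\phi_g|_{\{0\} \times \RR^k}$. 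The third step is to verify that $\Phi$ is a Lie groupoid morphism over the base map $(\varphi^c, \varphi^b) \mapsto ((\varphi^c)^*, (\varphi^b)^*)$: compatibility with $\bs$ and $\bt$ reduces to the dictionary from Step 1, while the crucial multiplicative identity $(\bm(\phi_g, \phi_h))^* = \bm^*(\phi_g^*, \phi_h^*)$ follows by pairing both sides with arbitrary vectors of the form $\tim(v_1, v_2)$ and invoking the defining formula $(\xi_1 \circ \xi_2)(\tim(v_1, v_2)) = \xi_1(v_1) + \xi_2(v_2)$ of the dual product. Finally, the $2$-equivariance follows from the standard identity $(\phi \circ A)^* = \phi^* \circ (A^T)^{-1}$ for dual frames, by a calculation parallel to the one at the end of the proof of Proposition~\ref{prop_isomorphism_s_t_bisection_frames}.

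I expect the main obstacle to be the bookkeeping in Step~3 rather than any conceptual difficulty: the formulas for $\bs$ and $\bt$ in Theorem~\ref{thm_special_frames_are_groupoid} involve both the right translations $\tiR_g$ and the auxiliary map $\dphig$ of Lemma~\ref{lem_t_phi}, and one must carefully track how each dualizes onto the corresponding structure on $E_\cG^*$. Should this become too tedious, a cleaner alternative is to factor $\Phi$ through Proposition~\ref{prop_isomorphism_s_t_bisection_frames}: pass first from $\Fr^\sbis(E_\cG)$ to $\Fr^\tbis(E_\cG)$ via $\psi$, and then apply a fiberwise-dualization isomorphism $\Fr^\tbis(E_\cG) \to \Fr^\tbis(E_\cG^*)$ induced by the canonical identification $E_\cG \cong (E_\cG^*)^*$, which has a more symmetric form.
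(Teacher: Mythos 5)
Your main route (Steps 1--4) is correct and is essentially the paper's own proof: the paper isolates your Steps 1--2 as a standalone lemma (Lemma \ref{lem.dual.s.bisec}), establishing exactly your ``dictionary'' $\ker(\tit^*)=\ker(\tis)^0$, $\ker(\tis^*)=\ker(\tit)^0$ and deducing that $\phi\mapsto\phi^*$ lands in $\Fr^\tbis(E_\cG^*)$, and then handles equivariance in one line via $(\phi\cdot A)^*=A^T\cdot\phi^*$. Two remarks on the comparison. First, your Step 3 (compatibility of $\Phi$ with $\bs$, $\bt$, $\bm$ via pairing against $\tim(v_1,v_2)$ and the defining formula of the dual multiplication) does not appear in the paper at all --- the published proof treats the groupoid-morphism property as implicit --- so your plan is actually more complete, and your version of the equivariance identity, $(\phi\circ A)^*=\phi^*\circ(A^T)^{-1}$, is the precise linear-algebra fact of which the paper's displayed identity is a shorthand (the inverse must appear somewhere when a right action is turned into a left one). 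Second, your fallback ``cleaner alternative'' contains a genuine error: fibrewise dualization does \emph{not} map $\Fr^\tbis(E_\cG)$ to $\Fr^\tbis(E_\cG^*)$. Because annihilators exchange the two kernels, the dual of a \frt\ frame of $E_\cG$ is an \frs\ frame of $E_\cG^*$ (and vice versa); this swap of $\tis$ and $\tit$ is precisely why the proposition pairs $\Fr^\sbis(E_\cG)$ with $\Fr^\tbis(E_\cG^*)$, and why Corollary \ref{cor_duality_PB} must compose $\Phi$ with $\psi^{-1}$ for $E_\cG^*$ afterwards. So the proposed factorization $\Fr^\sbis(E_\cG)\xrightarrow{\psi}\Fr^\tbis(E_\cG)\to\Fr^\tbis(E_\cG^*)$ would require a further application of (the analogue of) $\psi$ for $E_\cG^*$; as written it is not a valid shortcut, and you should stick to your direct argument.
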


Note that $E_M^*$ is the core of $E_\CG^*$ and $C^*$ its base.
 
\begin{proof}
Given an element $\phi \in \Fr^\sbis (E_\cG)$, its corresponding dual frame $\phi^* $ belongs to $\Fr^\tbis (E_\cG^*)$ by Lemma \ref{lem.dual.s.bisec}. The fact that $\Phi$ is equivariant with respect to the right and the left principal 2-action of $\GL(l,k)$ and the isomorphism $A \mapsto A^T$ follows from
\[
 (\phi \cdot A)^* = A^T \cdot \phi^*. \qedhere
\]
\end{proof}

\begin{lem}\label{lem.dual.s.bisec}
	Let $E_\CG\soutar E_M$ be a VB-groupoid of rank $(l,k)$ over a Lie groupoid $\CG\soutar M$. Given any \frs\ frame $\phi\in \Fr^\sbis(E_\CG)$, its dual frame $\phi^* \in \Fr(E_\cG^*)$ is a \frt\ frame of the dual VB-groupoid $E_\CG^*\soutar C^*$, which has rank $(k,l)$.
\end{lem}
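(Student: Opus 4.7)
My first step is to identify the kernels of the structure maps of the dual VB-groupoid $E_\cG^* \tto C^*$ (whose source and target I temporarily denote $\tis^*$ and $\tit^*$ for clarity) as annihilators of the kernels of the original structure maps. From the explicit formulas in Example~\ref{ex_dual_VB_groupoid}, a functional $\xi \in (E_\cG^*)_g$ lies in $\ker(\tit_g^*)$ iff it vanishes on the image of the map $C_{t(g)} \to (E_\cG)_g$, $v \mapsto \tim(v, 0_g)$; this map is a linear isomorphism onto $\ker(\tis_g)$ (both sides have dimension $l$), so $\ker(\tit_g^*) = \ker(\tis_g)^\circ$, the annihilator. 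Symmetrically, $\ker(\tis_g^*) = \ker(\tit_g)^\circ$ via the linear isomorphism $v \mapsto \tim(0_g, \tii(v))$ between $C_{s(g)}$ and $\ker(\tit_g)$.

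Now take an $s$-bisection frame $\phi \colon \RR^l \times \RR^k \to (E_\cG)_g$ and set $\alpha_i := \phi(e_i, 0)$, $\beta_j := \phi(0, f_j)$ with $\{e_i\}, \{f_j\}$ the standard bases; write $\alpha_i^* := \phi^*(e_i, 0)$ and $\beta_j^* := \phi^*(0, f_j)$ for the corresponding dual-basis functionals. The dual frame $\phi^* \colon \RR^l \times \RR^k \to (E_\cG^*)_g$ has domain of precisely the right shape for a $t$-bisection frame of $E_\cG^*$: since $E_\cG^*$ has rank $(k,l)$, Definition~\ref{def_special_frames_t} (with $l$ and $k$ swapped) requires $\phi^*|_{\{0\} \times \RR^k}$ to land in $\ker(\tit_g^*)$ and $\phi^*|_{\RR^l \times \{0\}}$ to be a bisection frame of $E_\cG^*$. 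The first condition is immediate: $\beta_j^*(\alpha_i) = 0$ for all $i$, and since the $\alpha_i$ span $\ker(\tis_g)$ (by condition~1 of Definition~\ref{def_special_frames} and dimension count), we obtain $\beta_j^* \in \ker(\tis_g)^\circ = \ker(\tit_g^*)$.

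For the bisection-frame condition, I would apply Lemma~\ref{lemma_equivalent_def_bisection_frames} to $E_\cG^*$ and verify that $\Ima(\phi^*|_{\RR^l \times \{0\}}) = \mathrm{span}(\alpha_1^*, \ldots, \alpha_l^*)$ is transverse to both $\ker(\tis_g^*)$ and $\ker(\tit_g^*)$. Transversality with $\ker(\tit_g^*) = \ker(\tis_g)^\circ$ is immediate because the $\alpha_i^*$ are by construction dual to the basis $\{\alpha_j\}$ of $\ker(\tis_g)$, so the restrictions $\alpha_i^*|_{\ker(\tis_g)}$ are linearly independent. For transversality with $\ker(\tis_g^*) = \ker(\tit_g)^\circ$, I would use the linear map $d_\phi \colon \RR^l \to \RR^k$ from Lemma~\ref{lem_t_phi} together with equation~\eqref{eq_def_t} to parameterize $\ker(\tit_g) = \{\phi(w, -d_\phi(w)) : w \in \RR^l\}$; the restriction $\alpha_i^*|_{\ker(\tit_g)}$ then sends $\phi(w, -d_\phi(w)) \mapsto w_i$, giving $l$ linearly independent functionals.

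The main obstacle is purely bookkeeping: matching up the domain ordering conventions $\RR^l \times \RR^k$ versus $\RR^k \times \RR^l$, and tracking the role swap between $l$ and $k$ when passing to the dual VB-groupoid of rank $(k,l)$. Once the annihilator description of the dual kernels is in place, every remaining verification is elementary linear algebra of dual bases.
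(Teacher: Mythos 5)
Your proof is correct and takes essentially the same approach as the paper's: identify $\ker(\tit^*)$ and $\ker(\tis^*)$ with the annihilators $\ker(\tis)^{0}$ and $\ker(\tit)^{0}$, obtain condition 1 of Definition \ref{def_special_frames_t} because the covectors $\phi^*(0,f_j)$ annihilate $\ker(\tis_g)$ (which is spanned by the $\phi(e_i,0)$), and settle the bisection-frame condition by dual-basis linear algebra. The only difference is in the transversality with $\ker(\tis^*)=\ker(\tit)^{0}$, where you parametrize $\ker(\tit_g)$ as $\{\phi(w,-\dphi(w)) : w\in\KR^l\}$ using Lemma \ref{lem_t_phi}, while the paper expands a vector $v\in\ker(\tit_g)$ in the frame $\phi$ and invokes transversality of $\Ima(\phi|_{\{0\}\times\KR^k})$ to $\ker(\tit_g)$ --- two equivalent elementary verifications, both resting on condition 2 of Definition \ref{def_special_frames}.
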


\begin{proof}
	Recall from Example \ref{ex_dual_VB_groupoid} that the source and target maps of $E^*_\cG \tto C^*$ are given by
	$$\tis^*(\xi)(v)=-\xi(0_g\circ \tii(v)) \y \tit^*(\xi)(v)=\xi(v\circ 0_g).$$
	Their kernels at $g\in \CG$ are
	$$\left\{\,(v\circ 0_g) \st v\in C_{\bs(g)\, }\right\}=\ker(\tis)_g \y \left\{\,(0_g\circ \tii(v)) \st v\in C_{\bt(g)} \,\right\}=\ker(\tit)_g,$$
	so that $\ker(\tit^*)=(\ker(\tis)^0)$ and $\ker(\tis^*)=(\ker(\tit)^0)$, where $0$ denotes the annihilator of those spaces. As a consequence, for any \frs\ frame $\phi$ of $E_\CG$, its dual $\phi^*$ satisfies the first condition in Definition \ref{def_special_frames_t} by the analogue of Lemma \ref{lemma_equivalent_def_frame_taking_values_in_kers}, since $\phi^*|_{\{0\}\times\KR^k}$ is a frame for $\ker(\tis)^0=\ker(\tit^*)$. Moreover, $\Ima(\phi^*|_{\KR^l\times \{0\}})$ is transverse to $\ker(\tit^*)$. We are left to show that $\phi^*|_{\KR^l\times \{0\}}$ is transverse to $\ker(\tis^*)=(\ker(\tit)^0)$; this is equivalent to the following linear map being injective
	$$\varphi\colon \ker(\tit)\fto \KR^l, \quad v\mapsto (\phi^*(e_1)(v),\dots,\phi^*(e_l)(v)).$$ 
	We notice first that any $v\in \ker(\tit)$ can be written as
	$$v=\sum_{i=1}^l\left(\phi^*(e_i)(v)\right)\phi(e_i)+\sum_{i=l+1}^{l+k}\left(\phi^*(e_i)(v)\right)\phi(e_i).$$
	If $v\in \ker(\varphi)$, then the first $l$ coordinates vanish, so $v\in {\rm Span}(\phi(e_{l+1}),\dots,\phi(e_{l+k}))$. Since $\phi$ is an \frs\ frame, the last $k$ coordinates must be transverse to $\ker(\tit)$, and therefore $v=0$. We conclude that $\phi^*$ satisfies the first condition in Definition \ref{def_special_frames_t}, hence $\phi^* \in \Fr(E^*_\cG)$.
\end{proof}

Composing the isomorphism $\Psi^{-1}$ from Proposition \ref{prop_isomorphism_s_t_bisection_frames} 
and $\Phi$ from Proposition \ref{prop_PB_groupoid_dual_VB_groupoid}
, one obtains the following conclusion.

\begin{cor}\label{cor_duality_PB}
  Let $E_\cG \tto E_M$ be a VB-groupoid over a Lie groupoid $\CG\soutar M$, and $E_\cG^* \tto C^*$ its dual VB-groupoid (Example \ref{ex_dual_VB_groupoid}). Then the PB-groupoid $\Fr^\sbis(E_\cG) \tto \Fr(C) \times_M \Fr(E_M)$ is canonically isomorphic to the PB-groupoid $\Fr^\sbis(E_\cG^*) \tto \Fr(E_M^*) \times_M \Fr(C^*)$. 
\end{cor}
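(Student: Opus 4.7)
The plan is essentially spelled out in the paragraph preceding the corollary: one defines the isomorphism $\Theta := \psi^{-1} \circ \Phi$, where $\Phi \colon \Fr^\sbis(E_\cG) \to \Fr^\tbis(E_\cG^*)$ is the dualisation map from Proposition \ref{prop_PB_groupoid_dual_VB_groupoid}, and $\psi \colon \Fr^\sbis(E_\cG^*) \to \Fr^\tbis(E_\cG^*)$ is the $\tii$-and-flip isomorphism from Proposition \ref{prop_isomorphism_s_t_bisection_frames} applied to the dual VB-groupoid. Explicitly, I would write $\Theta(\phi_g) = \tii^* \circ \phi_g^* \circ T$, where $\tii^*$ denotes the inverse map of $E_\cG^*$ and $T$ is the flip from Proposition \ref{prop_t_bis_frame_is_PB_groupoid}; note that $\psi$ is its own inverse, since both $\tii^*$ and $T$ are involutions.

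First I would check that $\Theta$ covers the correct base diffeomorphism. Composing the base map of $\Phi$ (namely $(\varphi^c,\varphi^b) \mapsto ((\varphi^c)^*, (\varphi^b)^*)$) with the base map of $\psi^{-1}$ for the dual (namely the flip $(a,b) \mapsto (b,a)$), one obtains $(\varphi^c,\varphi^b) \mapsto ((\varphi^b)^*, (\varphi^c)^*)$. This lands in $\Fr(E_M^*) \times_M \Fr(C^*)$, which is indeed the base of $\Fr^\sbis(E_\cG^*)$, because the core of $E_\cG^*$ is $E_M^*$ and its base is $C^*$ (Example \ref{ex_dual_VB_groupoid}).

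Next comes the equivariance, which is the only place where one must be slightly attentive. Both $\Phi$ and $\psi$ (hence $\psi^{-1}$) intertwine the principal 2-actions via the Lie 2-groupoid automorphism $A \mapsto A^T$ of $\GL(l,k)$, trading a right action for a left action and vice versa. Composing the two, the transposes cancel: $\Theta$ intertwines the right principal 2-action of $\GL(l,k)$ on $\Fr^\sbis(E_\cG)$ with the right principal 2-action on $\Fr^\sbis(E_\cG^*)$ via $(A^T)^T = A$, i.e. via $\id_{\GL(l,k)}$, as required of an honest PB-groupoid isomorphism.

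All other properties are inherited: $\Theta$ is a diffeomorphism as a composition of diffeomorphisms, and a Lie groupoid isomorphism as a composition of Lie groupoid isomorphisms. So there is really no hard step to this corollary -- the genuine technical content sits in Lemma \ref{lem.dual.s.bisec} (which guarantees that duality sends \frs\ frames of $E_\cG$ to \frt\ frames of $E_\cG^*$) and in Propositions \ref{prop_isomorphism_s_t_bisection_frames} and \ref{prop_PB_groupoid_dual_VB_groupoid}, both of which are already established. The only item worth flagging in the writeup is the cancellation of the double transpose, since this is what promotes the composition from an equivariant isomorphism between right and left 2-actions (mediated by $(-)^T$) to a genuine PB-groupoid isomorphism of two right-action PB-groupoids.
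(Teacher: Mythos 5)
Your proposal is correct and is exactly the paper's own proof: the paper obtains the corollary by composing $\Psi^{-1}$ from Proposition \ref{prop_isomorphism_s_t_bisection_frames} (applied to the dual VB-groupoid) with $\Phi$ from Proposition \ref{prop_PB_groupoid_dual_VB_groupoid}, which is precisely your $\Theta = \psi^{-1} \circ \Phi$. Your additional checks --- the composed base diffeomorphism $(\varphi^c,\varphi^b) \mapsto ((\varphi^b)^*, (\varphi^c)^*)$ and the cancellation $(A^T)^T = A$ that turns the two $(-)^T$-twisted equivariances into genuine equivariance of right 2-actions --- are correct elaborations of what the paper leaves implicit in its one-line argument.
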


\begin{ex}
Given a VB-groupoid $E_\cG \tto E_M$ with trivial core, its PB-groupoid $\Fr(E_\cG) \tto \Fr(E_M)$ (Example \ref{ex_rank_0k}) is canonically isomorphic to the PB-groupoid $\Fr(E^*_\cG) \tto \Fr(E_M^*)$ associated to the VB-groupoid $E_\cG^* \tto M \times \{0\}$ with trivial base and core $E^*_M$ (Example \ref{ex_rank_l0}).
\end{ex}

\addcontentsline{toc}{section}{References}

\printbibliography

\end{document}